\documentclass[a4paper,12pt, reqno]{amsart}
\usepackage{amsmath, amsthm, amscd, amsfonts, amssymb, graphicx, color}
\usepackage[bookmarksnumbered, colorlinks, plainpages]{hyperref}
\hypersetup{colorlinks=true,linkcolor=red, anchorcolor=green, citecolor=cyan, urlcolor=red, filecolor=magenta, pdftoolbar=true}

\usepackage{tensor}
\usepackage{fullpage}

\usepackage[textwidth=20cm,top=2cm,bottom=1.5cm,left=2cm,right=2cm]{geometry}

%

\newtheorem{theorem}{Theorem}[section]
\newtheorem{lemma}[theorem]{Lemma}
\newtheorem{proposition}[theorem]{Proposition}

\theoremstyle{definition}
\newtheorem{definition}[theorem]{Definition}
\newtheorem{example}[theorem]{Example}

\newtheorem{conjecture}[theorem]{Conjecture}

\newtheorem{problem}[theorem]{Problem}
\newtheorem{remark}[theorem]{Remark}
\numberwithin{equation}{section}

\setlength{\parindent}{0pt}

\begin{document}
	
	\setcounter{page}{0}
	
	\title[Quasi-uniform type spaces]{Quasi-uniform type spaces}

	\author[Ya\'e Ulrich Gaba]{Ya\'e Ulrich Gaba$^{1,2,3,\dagger}$}

	\address{$^{1}$ Department of Mathematical Sciences, North West University, Private Bag
		X2046, Mmabatho 2735, South Africa.}

	\address{$^{2}$ Institut de Math\'ematiques et de Sciences Physiques (IMSP), 01 BP 613 Porto-Novo, B\'enin.}

	\address{$^{3}$ African Center for Advanced Studies (ACAS),
		P.O. Box 4477, Yaounde, Cameroon.}

	\email{\textcolor[rgb]{0.00,0.00,0.84}{yaeulrich.gaba@gmail.com
	}}

	\keywords{metric type space, fixed point, $\lambda$-sequence.}

	\subjclass[2010]{54E35; 54F05; 68Q55; 06A06; 03G25; 06F35; 47H05;47H09; 47H10}
	
	\keywords{Partial quasi-metric type; Partial $b$-metric; Quasi-metric; Weight; Fixed Point.}
	
	\date{Received: xxxxxx; Accepted: zzzzzz.
		\newline \indent $^{\dagger}$Corresponding author}
	
	\begin{abstract}
		
		In this article we introduce and investigate the concept of partial quasi-metric type space as a generalization of both partial quasi-metric and quasi-metric type spaces. We show that many
		important constructions studied in K\"unzi's theory of partial quasi-metrics can be successfully extended to these spaces. In particular, we prove that the basic theories of topology
		and quasi-uniformity are essentially the same for quasi-metric type spaces as for quasi-metric spaces and by extensions, to partial quasi-metric type spaces. We also prove that the Banach, Kannan, Reich and Chatterjea fixed
		point theorems can be successfully extended to this more general setting.
	
	\end{abstract} 
	
	\maketitle
	
	\section{Introduction}
	
	Partial metric spaces (PMS) were introduced by Matthews\cite{mat} in 1992 where he explained that such functions can be used to
	study non-Hausdorff topologies. They generalize the concept of a metric space and are also useful in modelling partially defined
	information, which often appears in computer science. In fact, (complete) partial metric spaces constitute a suitable framework to model several examples
	of the theory of computation and also to model metric spaces via domain theory (see for instance \cite{heck,sal}). The particularity of these spaces is the property that the self-distance of any point of the space may not be zero. Recently, many authors have focused on the PMSs and their
	topological properties as well as
	fixed point results in these spaces (see e.g. \cite{altun,oltra}). 
	In \cite{kun}, by dropping the symmetry condition in the definition of a partial metric, K\"unzi et al. studied another variant of
	partial metrics, namely partial quasi-metrics and proved that they are equivalent to weighted quasi-metrics. On another hand, Gaba et al. \cite{gaba} introduced the so-called quasi-pseudometric type spaces as a generalization of the quasi-pseudometric spaces and majorly made use of the concept of quasi-cone metric space. Quasi-pseudometric type relax the triangle inequality. Indeed 
	establishing triangle inequality for quasi-pseudometric is often challenging but proving triangularity for quasi-pseudometric type can be much easier. In Section 3 of \cite{gaba2}, the author discussed some topological properties of quasi-pseudometric type spaces. For instance the concepts of left $K$-Cauchy sequence, right $K$-Cauchy sequence, $D^s$-Cauchy sequence, and convergence and completeness for a quasi-pseudometric type space are defined in a similar way as defined for a quasi-pseudometric space but naturally present a wider framework for topological studies. In particluar, we shall say that the $\alpha$-quasi-pseudometric $(X,D,\alpha)$ is bicomplete if the metric type space $(X,D^s,\alpha)$ is complete (see \cite[Definition 32]{gaba}).
	In this paper we aim at unifying both the concept of a quasi-metric type and that of a partial
	quasi-metric spaces by introducing the partial quasi-metric type space. In particular, we show that the basic theories of topology
	and quasi-uniformity are essentially the same for  quasi-metric type spaces as for quasi-metric spaces and we also find that Banach contractions, Kannan contractions, Reich contractions and Chatterjea contractions can easily be expressed in this new setting.
Also, in \cite{kun}, K\"unzi et al. described a bijection between quasi-metrics with weight and lopsided partial quasi-
metrics on $X$. In the present manuscript, we prove that a similar correspondence holds between partial quasi-metric type and weighted quasi-metric type.

\vspace*{0.5cm}

\section{Background definitions and first results}

First, we recall some definitions from the theory
of quasi-metric type spaces.

	\begin{definition}( Compare \cite[Definition 29]{gaba})
		Let $X$ be a nonempty set, and let the function $D:X\times X \to [0,\infty)$ satisfy the following properties:
		\begin{itemize}
			\item[(D1)] $D(x,x)=0$ for any $x \in X$;
			\item[(D2)] $D(x,y) \leq \alpha \big( D(x,z)+D(z,y) \big)$ for any points $x,y,z\in X$ and some constant $\alpha\geq 1$.
		\end{itemize}
		The triplet $(X,D,\alpha)$ is called a \textbf{quasi-pseudometric type space} or $\alpha$-quasi-pseudometric space.
Sometimes the constant $\alpha$ could be implied and then omitted and we could just write $(X,D)$ to refer to an $\alpha$-quasi-pseudometric space.  		
Moreover, if $D(x, y) = 0 = D(y, x) \Longrightarrow x = y$, then $D$ is said
to be a $T_0$-quasi-pseudometric type space or a quasi-metric type space  or an $\alpha$-quasi-metric. The latter condition is referred to as the $T_0$-condition.

We shall speak of {\bf extended} $\alpha$-quasi-pseudometric when the mapping $D$ can attain the value $\infty$.	
\end{definition}

	Let $(X,D,\alpha)$ be a quasi-pseudometric type space, the \textbf{\textit{conjugate}}(or \textbf{\textit{dual}}) of $D$ is the function denoted $D^{-1}$ and defined, whenever $x,y \in X$ 
	by
	$$D^{-1}(x,y)=D(y,x).$$ 
	
One can easily verify that, from a $T_0$-quasi-pseudometric type $D$ we obtain a metric type (in the sense of Khamsi \cite{khamsi} ) or $b$-metric $D^s$ by setting $D^s(x,y):= \max \{D(x,y),D(y,x)\}$ whenever $x,y \in (X,D,\alpha)$. More on topological properties of metric type spaces can be read in \cite{gab,khamsi}. 
 Also, if
$D$ is an $\alpha$-quasi-pseudometric then it is a $\beta$-quasi-pseudometric for each real $\beta \geq \alpha$.
Moreover, for $\alpha = 1$, we recover the
classical quasi-pseudometric; i.e. every quasi-pseudometric is a $1$-quasi-pseudometric, hence quasi-pseudometric type
generalizes quasi-pseudometric. Indeed there are $\alpha$-quasi-pseudometrics which are not quasi-pseudometrics.

\begin{example}
Consider $D:\mathbb{R} \times \mathbb{R} \to \mathbb{R}$ defined by $D(x, y) = \max\{0, (x-y)^2\}$ . Then $D$ is
a $2$-quasi-pseudometric on $\mathbb{R}$ because for every two real numbers $a$ and $b$, we have $$(a + b)^2 \leq 2 (a^2+b^2).$$ But $D$ is certainly not a quasi-pseudometric as $$D(1,-1) = 4 \nleq D(1, 0)+D(0, -1) = 1+1 = 2.$$ More generally, for every even integer $n=2k$, $D(x, y) = \max\{0, (x-y)^n\}$ is a $2^n$-quasi-pseudometric on $\mathbb{R}$. 

\vspace*{0.3cm}

A direct consequence of the above is that, for any function $f: X\to [0,+\infty)$, defined on a non-empty set $X$, the application $D_f$ defined on $X^2$ by $D_f(x,y)= \max\{0, (f(x)-f(y))^n\}$ is a $2^n$-quasi-pseudometric, and is a $T_0$-quasi-pseudometric type space if and only if $f$ is one-to-one.

\end{example}

\begin{example}
	A larger class of $\alpha$-quasi-pseudometrics can also be obtained by considering positive powers of quasi-pseudometrics. Indeed, if $(X,d)$ is be a quasi-pseudometric space, then the mapping $D$ defined by $D(x,y) = (d(x,y))^p $ whenever $x,y \in X$ and $p>1$ is a $2^{p-1}$-quasi-pseudometric.
\end{example}

\begin{example}
Let $(X,d)$ is be a quasi-pseudometric space. Let $\beta >1, \lambda\geq 0$, and $\mu > 0,$ and for $x, y \in X$ define $H (x, y) = \lambda d(x, y) + \mu d (x, y)^\beta$. In general $H$ is not a quasi-metric on $X$. However, for any $z\in X$, one has

\begin{align*}
	H(x,y)& = \lambda  d(x, y) + \mu d(x, y)^\beta \\
	    & \leq \lambda[d(x,z)+d(z,y)] + \mu[d(x,z)+d(z,y)]^\beta\\
	    & \leq \lambda[d(x,z)+d(z,y)] + 2^{\beta-1}\mu [d(x,z)^\beta+d(z,y)^\beta]\\
	    &\leq 2^{\beta-1} [H(x,z)+H(z,y)],
\end{align*}
i.e. $(X,H)$ is a $2^{\beta-1}$-quasi-metric space.
\end{example}	
	 
In order to make the reader more comfortable, we recall some definitions that we take from \cite{gaba}, as we shall need them later on.

\begin{definition}
	Let $(X,d,\alpha)$ be a quasi-pseudometric type space. For $x \in X$ and $\varepsilon > 0$, $$B_{d}(x,\varepsilon)=\lbrace y \in X: d(x,y) < \varepsilon \rbrace $$ denotes the open $\varepsilon$-ball at $x$. The collection of all such balls yields a base for a topology $\tau (d)$ induced by $d$ on $X$. Similarly, for $x \in X$ and $\varepsilon \geq 0$, $$C_{d}(x,\varepsilon)=\lbrace y \in X: d(x,y) \leq \varepsilon \rbrace $$ denotes the closed $\varepsilon$-ball at $x$.
	
\end{definition}

\begin{definition} 
	Let $(X,d,\alpha)$ be a quasi-pseudometric type space. The convergence of a sequence $(x_n)$ to $x$ with respect to $\tau(d)$, called \textbf{$d$-convergence} or \textbf{left-convergence} and denoted by $x_n \overset{d}{\longrightarrow} x$, is defined in the following way
	\begin{equation}
	x_n \overset{d}{\longrightarrow} x \Longleftrightarrow d(x_n,x) \longrightarrow 0 .
	\end{equation}
	
	Similarly, the convergence of a sequence $(x_n)$ to $x$ with respect to $\tau(d^{-1})$, called \textbf{$d^{-1}$-convergence} or \textbf{right-convergence} and denoted by $x_n \overset{d^{-1}}{\longrightarrow} x$,
	is defined in the following way
	\begin{equation}
	x_n \overset{d^{-1}}{\longrightarrow} x \Longleftrightarrow d(x,x_n) \longrightarrow 0 .
	\end{equation}
	
	Finally, in a quasi-pseudometric type space $(X,d,\alpha)$, we shall say that a sequence $(x_n)$ \textbf{$d^s$-converges} to $x$ if it is both left and right convergent to $x$, and we denote it as $x_n \overset{d^{s}}{\longrightarrow} x$ or $x_n \longrightarrow x$ when there is no confusion.
	Hence
	\[
	x_n \overset{d^{s}}{\longrightarrow} x \ \Longleftrightarrow \  x_n \overset{d}{\longrightarrow} x \ \text{ and }\ x_n \overset{d^{-1}}{\longrightarrow} x.
	\]
	
\end{definition}

\begin{definition}
	A sequence $(x_n)$ in a quasi-pseudometric type $(X,d,\alpha)$ is called
	\begin{itemize}

		\item[(a)] \textbf{left $K$-Cauchy} if for every $\epsilon >0$, there exists $n_0 \in \mathbb{N}$ such that 
		$$ \forall \  n,k: n_0\leq k \leq n \quad d(x_n,x_k )< \epsilon  ;$$
		
		\item[(b)] \textbf{ $d^s$-Cauchy} if for every $\epsilon >0$, there exists $n_0 \in \mathbb{N}$ such that 
		$$ \forall n, k \geq n_0 \quad d(x_n,x_k )< \epsilon .$$
	\end{itemize}

	Dually, we define in the same way, \textbf{right $K$-Cauchy} sequences.
\end{definition}

Observe that a sequence is $d^s$-Cauchy in the $T_0$-quasi-pseudometric type $(X,d,\alpha)$ if and only if it is Cauchy in the metric type $(X,d^s,\alpha)$.

\begin{definition}
	A quasi-pseudometric type space $(X,d,\alpha)$ is called
	
	\begin{itemize}
		\item \textbf{left $K$-complete} provided that any left $K$-Cauchy sequence is $d$-convergent,
		\item {\bf left Smyth sequentially complete} if any left $K$-Cauchy sequence is $d^s$-convergent.
	\end{itemize} 
\end{definition}
The dual notions of \textbf{right-completeness} are easily derived from the above. 

\begin{definition}
	A $T_0$-quasi-pseudometric type space $(X,d,\alpha)$ is called \textbf{bicomplete} provided that the metric type $d^s$ on $X$ is complete.
\end{definition}
	
The abbreviation $\alpha$-QPM refers to $\alpha$-quasi-pseudometric. In the coming section, we shall mention a few properties on the topology of $\alpha$-QPM spaces.

	\section{Topology of $\alpha$-QPM}	
	The investigations on topological properties of $\alpha$-QPM have already began in \cite{gaba} but was just limited to the ideas of $K$-Cauchy sequences, left(right) convergence and that of bicompleteness. Here we shall focus more on (quasi)metrizability of these spaces.

	 In the following lines, our purpose is to show that basic theories of topology
	and quasi-uniformity are essentially the same for quasi-metric type spaces as for quasi-metric spaces. We recall that given a set $X \neq \emptyset$, a \textbf{quasi-uniformity} $\mathcal{U}$ on $X$ is a filter on $X\times X$ such that
	\begin{itemize}
		\item[(1)] Each member $U$ of $\mathcal{U}$ contains the diagonal $\Delta_X=\{ (x,x):x\in X\}$ of $X$;
		\item[(2)] For each member $U$ of $\mathcal{U}$ there exists a $V \in \mathcal{U}$ such that $V^2  \subseteq U$.
	\end{itemize}
		Here $$V^2:=V\circ V=\{(x,z) \in X\times X: \text{ there exixts } y\in X \text{ such that } (x,y)\in V \text{ and }(y,z)\in V\}.$$
	
	The members $U$ of $\mathcal{U}$ are called \textbf{entourages} of $\mathcal{U}$ and the pair $(X,\mathcal{U})$ a \textbf{quasi-uniform space}.
	
Each quasi-uniformity $\mathcal{U}$ on a set $X$ induces a topology $\tau(\mathcal{U})$ as follows: For each $x\in X$ and $U\in \mathcal{U}$, set $$U(x)=\{ y\in X: (x,y)\in U\}.$$ A subset $G\subseteq X$ belongs to $\tau(\mathcal{U})$ if and only if for each $x\in G$, there exists $U\in \mathcal{U}$ such that $U(x)\subseteq G$. In particular, we know from \cite{kun2} that given a quasi-pseudometric $d$ on a set $X$, the filter on $X\times X$ generated by the base $\{U_\epsilon:\epsilon>0\}$ where $U_\epsilon= \{(x,y)\in X\times X:d(x,y)<\epsilon \}$, is a quasi-uniformity called \textbf{quasi-pseudometric quasi-uniformity} and denoted $\mathcal{U}_d$. It is the quasi-uniformity induced by $d$ on $X$. Indeed, just observe that for each $\epsilon>0$, $U^2_{\epsilon/2} \subseteq U_\epsilon$. 

\vspace*{0.3cm}

In the sequel, we prove that given an $\alpha$-quasi-pseudometric $D$ on a set $X$, the filter on $X\times X$ generated by the base $\{N_\epsilon:\epsilon>0\}$ where $N_\epsilon= \{(x,y)\in X\times X:D(x,y)<\epsilon \}$, is also a quasi-uniformity that we shall call \textbf{$\alpha$-quasi-uniformity}.   

\begin{lemma}\label{lem3.1}
For any $\alpha$-quasi-pseudometric space $(X, D,\alpha)$, the filter $\mathcal{U}_D$ on $X\times X$ generated by the base $\{N_\epsilon:\epsilon>0\}$ where $N_\epsilon= \{(x,y)\in X\times X:D(x,y)<\epsilon \}$, is a quasi-uniformity.   
\end{lemma}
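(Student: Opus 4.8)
The plan is to check directly that $\mathcal{U}_D$ satisfies the two defining conditions of a quasi-uniformity, after first verifying that $\{N_\epsilon:\epsilon>0\}$ is a genuine filter base. For the latter, I would note that for any $\epsilon_1,\epsilon_2>0$ one has $N_{\epsilon_1}\cap N_{\epsilon_2}=N_{\min\{\epsilon_1,\epsilon_2\}}$, so the family is directed downward by inclusion; since $X\neq\emptyset$ and (as noted below) each $N_\epsilon$ contains the diagonal, no basic set is empty, so the filter $\mathcal{U}_D$ generated by this base is a proper filter on $X\times X$.

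For condition (1), I would invoke axiom (D1): for every $x\in X$ we have $D(x,x)=0<\epsilon$, so $(x,x)\in N_\epsilon$ for each $\epsilon>0$. Hence every basic entourage contains $\Delta_X$, and therefore so does every member of $\mathcal{U}_D$, since $\mathcal{U}_D$ consists of supersets of the basic sets.

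The one step that genuinely uses the constant $\alpha$ is condition (2). Given $\epsilon>0$, I would put $\delta=\dfrac{\epsilon}{2\alpha}$ and show $N_\delta\circ N_\delta\subseteq N_\epsilon$. Indeed, if $(x,z)\in N_\delta$ and $(z,y)\in N_\delta$, then $D(x,z)<\delta$ and $D(z,y)<\delta$, so by (D2)
$$D(x,y)\le\alpha\big(D(x,z)+D(z,y)\big)<\alpha(2\delta)=\epsilon,$$
i.e. $(x,y)\in N_\epsilon$. Since $N_\delta\in\mathcal{U}_D$, this furnishes, for the basic entourage $N_\epsilon$, a member $V=N_\delta$ of $\mathcal{U}_D$ with $V^2\subseteq N_\epsilon$; for an arbitrary $U\in\mathcal{U}_D$ one picks $N_\epsilon\subseteq U$ and the same $V$ works. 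Combining the filter-base verification with (1) and (2) shows that $\mathcal{U}_D$ is a quasi-uniformity, which we name the $\alpha$-quasi-uniformity induced by $D$.

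I do not expect a genuine obstacle: the argument is a routine adaptation of the classical quasi-pseudometric case recalled just above, the only subtlety being to absorb the multiplicative constant $\alpha$ by replacing the usual halving $\epsilon/2$ with $\epsilon/(2\alpha)$ in the choice of $\delta$, exactly as the relaxed triangle inequality (D2) dictates.
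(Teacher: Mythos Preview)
Your proof is correct and follows essentially the same approach as the paper: verify $\Delta_X\subseteq N_\epsilon$ via (D1), and for condition (2) choose $r$ with $2\alpha r\le\epsilon$ (you take $r=\epsilon/(2\alpha)$ explicitly) so that $N_r\circ N_r\subseteq N_\epsilon$ by (D2). Your additional verification that $\{N_\epsilon\}$ is a filter base is a detail the paper omits but is harmless.
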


	\begin{proof}
		If $U \in \mathcal{U}_D$, then, there exists $\epsilon>0$ such that $N_\epsilon \subseteq U.$ Since for each $x\in X, D(x,x)=0$, then each $(x,x)\in N_\epsilon,$ so $\Delta_X \subseteq U.$
		Moreover, if $U \in \mathcal{U}_D$, then find $\epsilon>0$ such that $N_\epsilon \subseteq U.$ Hence, observe that there is an $r>0$ such that $2\alpha r \leq \epsilon.$ Set $V = N_r$ and note that for $(x,y),(y,z) \in N_r$ then $D(x,z)\leq \alpha(D(x,y)+D(y,z)\leq 2 \alpha r \leq \epsilon$. So $N_s \circ N_s \subseteq N_\epsilon \subseteq U.$

	\end{proof}

The next lemma establishes that any $\alpha$-quasi-pseudometric is topologically equivalent to a bounded $\alpha$-quasi-pseudometric.

\begin{lemma}\label{lemma2.1}
	Let $(X,D,\alpha)$ be an $\alpha$-quasi-pseudometric space and let $\lambda \in (0,1)$. Then there exists an $\alpha$-quasi-pseudometric $E$ on $X$ such that $E(x,y)\leq \lambda$ for all $x,y \in X$ and $E$ and $D$ induce the same topology on $X$.
\end{lemma}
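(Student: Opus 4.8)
The plan is to exhibit $E$ explicitly by truncating $D$: set
$E(x,y):=\min\{D(x,y),\lambda\}$ for all $x,y\in X$. With this choice the two ``easy'' requirements are immediate. Property (D1) holds because $E(x,x)=\min\{0,\lambda\}=0$, and $E(x,y)\le\lambda$ for all $x,y$ by construction. So the real content is to check the $\alpha$-triangle inequality (D2) for $E$ and to check $\tau(E)=\tau(D)$.

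For (D2) I would fix $x,y,z\in X$ and argue by cases according to whether the cap $\lambda$ is attained on the ``legs''. If $D(x,z)>\lambda$, then $E(x,z)=\lambda\ge E(x,y)$, hence $E(x,y)\le E(x,z)\le\alpha\big(E(x,z)+E(z,y)\big)$ since $\alpha\ge1$ and $E(z,y)\ge0$; the symmetric case $D(z,y)>\lambda$ is handled identically. In the remaining case $D(x,z)\le\lambda$ and $D(z,y)\le\lambda$, so $E(x,z)=D(x,z)$ and $E(z,y)=D(z,y)$; the $\alpha$-triangle inequality for $D$ gives $D(x,y)\le\alpha\big(D(x,z)+D(z,y)\big)$, and applying the monotone map $t\mapsto\min\{t,\lambda\}$ to both sides yields $E(x,y)=\min\{D(x,y),\lambda\}\le\min\{\alpha(D(x,z)+D(z,y)),\lambda\}\le\alpha\big(D(x,z)+D(z,y)\big)=\alpha\big(E(x,z)+E(z,y)\big)$. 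This case bookkeeping around the truncation is the only step requiring any care; everything else is routine.

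For the topological equivalence I would simply compare open balls. For every $0<\varepsilon<\lambda$ one reads off from the definition that $E(x,y)<\varepsilon\iff D(x,y)<\varepsilon$, hence $B_E(x,\varepsilon)=B_D(x,\varepsilon)$. Consequently the families $\{B_E(x,\varepsilon):x\in X,\ 0<\varepsilon<\lambda\}$ and $\{B_D(x,\varepsilon):x\in X,\ 0<\varepsilon<\lambda\}$ coincide, and since each of them is a base for $\tau(E)$, respectively $\tau(D)$ (balls of small radius already generate the topology), we conclude $\tau(E)=\tau(D)$. As a bonus, if $D$ is a $T_0$-quasi-pseudometric type (an $\alpha$-quasi-metric), then so is $E$, because $E(x,y)=0$ forces $D(x,y)=0$ as $\lambda>0$.

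If a strictly bounded $E$ with $E(x,y)<\lambda$ is preferred, the same scheme works with the smooth transform $E(x,y):=\lambda\,D(x,y)/\big(1+D(x,y)\big)$: using that $t\mapsto t/(1+t)$ is increasing and subadditive, and that $\alpha t/(1+\alpha t)\le\alpha t/(1+t)$ whenever $\alpha\ge1$, one obtains (D2) for $E$ from (D2) for $D$, while the identity $E(x,y)<\delta\iff D(x,y)<\delta/(\lambda-\delta)$ for $0<\delta<\lambda$ gives a radius-rescaling bijection between the two systems of balls, so again $\tau(E)=\tau(D)$. I would present the $\min$-construction as the main argument since its topology comparison is the cleanest.
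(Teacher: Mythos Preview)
Your proof is correct and follows essentially the same approach as the paper: both define $E(x,y)=\min\{\lambda,D(x,y)\}$ and verify (D2) by the same case split on whether the truncation $\lambda$ is attained on one of the legs. The only cosmetic difference is that the paper checks topological equivalence via sequences ($E(x_n,x)\to0\iff D(x_n,x)\to0$) rather than by identifying small balls, and the paper does not include your alternative $t/(1+t)$ construction.
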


\begin{proof}
	We define $E(x, y) = \min\{\lambda, D(x, y)\}$ and claim that $E$ is an $\alpha$-QPM on $X$. The properties (D1) is immediate from the definition. For the property (D2) (relaxed triangle inequality ), consider the points $x,y,z\in X$. Then $E(x, y) \leq \lambda$ and so $$E(x, y) \leq\big( E(x,z)+E(z,y) \big)$$
	when either $E(x,z)=\lambda$ or $E(z,y)=\lambda$.
	
The only remaining case is when $E(x, z) = D(x, z) < \lambda$ and  $E(z,y)=D(z,y)<\lambda$. But 
$$D(x,y) \leq \alpha \big( D(x,z)+D(z,y) \big)$$
and $ E(x, y) \leq D(x, y)$, and so

$$E(x,y) \leq \alpha \big( E(x,z)+E(z,y) \big).$$

Thus $E$ is an $\alpha$-QPM on $X$. It only remains to show that the topology induced by $E$ is the
same as the one induced by $D$. We have, as $n\to \infty$, that

$$ E(x_n , x) \to 0 \quad \Leftrightarrow \quad \min\{\lambda, D(x_n , x)\} \to 0 \quad \Leftrightarrow \quad D(x_n , x) \to 0$$
and we are done.
\end{proof}

The $\alpha$-QPM $E$ in the above lemma is said to be bounded by $\lambda$.

Next, we establish a connexion between left(right)-$K$-completeness and the bicompleteness.

\begin{proposition}
		Let $(X,D,\alpha)$ be an $\alpha$-quasi-pseudometric space. Then $(X,D,\alpha)$ is bicomplete if and only if it is both left $K$-complete and right $K$-complete and for any sequence $(x_n)_{n\geq 1}\subseteq X$ which both left-convergent and right-convergent, the limits coincide.
\end{proposition}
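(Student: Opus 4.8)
The plan is to treat the two implications of the equivalence separately; the sufficiency is the shorter and cleaner half.

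\textbf{Sufficiency.} Assume $(X,D,\alpha)$ is left $K$-complete, right $K$-complete, and that whenever a sequence is at once left- and right-convergent the two limits agree. Let $(x_n)$ be $D^s$-Cauchy, equivalently (as observed above) a Cauchy sequence in the associated metric type space $(X,D^s,\alpha)$. The key preliminary remark is that a $D^s$-Cauchy sequence is simultaneously left $K$-Cauchy and right $K$-Cauchy: the condition ``$D(x_n,x_k)<\varepsilon$ for all $n,k\ge n_0$'' contains, as special cases, both ``$D(x_n,x_k)<\varepsilon$ for $n_0\le k\le n$'' and ``$D(x_k,x_n)<\varepsilon$ for $n_0\le k\le n$''. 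Left $K$-completeness then produces $x$ with $x_n\overset{D}{\longrightarrow}x$, right $K$-completeness produces $y$ with $x_n\overset{D^{-1}}{\longrightarrow}y$, and the coincidence hypothesis forces $x=y$. Hence $D(x_n,x)\to 0$ and $D(x,x_n)\to 0$, so $D^s(x_n,x)=\max\{D(x_n,x),D(x,x_n)\}\to 0$; thus every $D^s$-Cauchy sequence converges in $D^s$, i.e. $D$ is bicomplete.

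\textbf{Necessity.} Now assume $D$ is bicomplete. For the coincidence of limits, suppose $x_n\overset{D}{\longrightarrow}x$ and $x_n\overset{D^{-1}}{\longrightarrow}y$; the relaxed triangle inequality gives at once $D(y,x)\le\alpha\big(D(y,x_n)+D(x_n,x)\big)\to 0$, so $D(y,x)=0$, and I would then aim to produce $D(x,y)=0$ as well --- via a two-sided estimate distilled from the two convergences, so that $(x_n)$ is recognized as $D^s$-Cauchy and its $D^s$-limit identified with both $x$ and $y$ --- after which the $T_0$-condition yields $x=y$. For left $K$-completeness, I would take $(x_n)$ left $K$-Cauchy and attempt to upgrade this to a $D^s$-Cauchy condition; bicompleteness would then supply a $D^s$-limit $x$, which in particular is a $D$-limit, so $(x_n)$ is $D$-convergent. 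Right $K$-completeness is entirely dual.

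\textbf{Where the difficulty lies.} Once the ``$D^s$-Cauchy $\Rightarrow$ left and right $K$-Cauchy'' remark is recorded, the sufficiency half is routine. The real work, and what I expect to be the main obstacle, is the necessity half: bicompleteness is a genuinely two-sided hypothesis, so before one can feed into it a one-sided $K$-Cauchy sequence, or a pair of separately obtained left/right limits, one must first recover two-sided control --- i.e. a $D^s$-Cauchy condition --- and it is exactly at this passage that the constant $\alpha$ and the relaxed triangle inequality must be deployed carefully, and where one has to check that the one-sided data really are enough to make the argument close.
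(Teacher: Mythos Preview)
Your sufficiency half (left and right $K$-completeness plus coincidence of limits $\Rightarrow$ bicomplete) is correct and is exactly the paper's argument for that implication: a $D^s$-Cauchy sequence is automatically both left and right $K$-Cauchy, the two completeness hypotheses produce left and right limits, the coincidence hypothesis identifies them, and $D^s$-convergence follows.

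For the necessity half you have correctly located the obstruction, and you should know that the paper does not overcome it either. To derive left $K$-completeness from bicompleteness, the paper's proof begins by taking a sequence that is \emph{simultaneously} left $K$-Cauchy and right $K$-Cauchy, observes that such a sequence is $D^s$-Cauchy, and then applies bicompleteness. But that does not establish left $K$-completeness: one must show that \emph{every} left $K$-Cauchy sequence is $D$-convergent, not only those that happen also to be right $K$-Cauchy. Your worry about ``upgrading'' a one-sided $K$-Cauchy condition to a two-sided $D^s$-Cauchy condition is precisely the missing step, and the paper simply sidesteps it by imposing the extra hypothesis on the sequence. Likewise, the paper verifies the coincidence-of-limits clause only for the particular sequence it has already shown to be $D^s$-convergent, not for an arbitrary sequence that is separately left- and right-convergent; your partial argument via $D(y,x)\le\alpha\big(D(y,x_n)+D(x_n,x)\big)\to 0$ actually goes further than the paper does on this point, though (as you note) it still leaves $D(x,y)=0$ unproved.

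In summary: your proof of the easier implication is complete and matches the paper; your proof of the harder implication is incomplete, but the gap you have honestly flagged is the same gap present in the paper's own argument.
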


	\begin{proof}
	For the sufficient condition, let  $(x_n)_{n\geq 1}$ be a sequence of elements of the bicomplete $\alpha$-quasi-pseudometric space $(X,D,\alpha)$ and assume that $(x_n)_{n\geq 1}$ is both a left $K$-Cauchy sequence and a right $K$-Cauchy sequence. That is, 
	
	$\epsilon >0$, there exists $n_0,n_1 \in \mathbb{N}$ such that 
	$$ \forall \  n,k: n_0\leq k \leq n \quad D(x_n,x_k )< \epsilon  ;$$
	
	and 
	$$ \forall \  n,k: n_1\leq k \leq n \quad D(x_k,x_n )< \epsilon  .$$
Hence there exists $n_{\max}=\max\{n_0,n_1\}$ such that 
$$ \forall \  n,k\geq n_{\max} \quad D^s(x_n,x_k )< \epsilon  ,$$	
i.e. $(x_n)_{n\geq 1}$ is $D^s$-Cauchy sequence and
since $(X,D,\alpha)$ is bicomplete, there exists $x^*$ such that $x_n \overset{D^{s}}{\longrightarrow} x^*.$
This implies, by definition that
$x_n \overset{D}{\longrightarrow} x^* \ \text{ and }\ x_n \overset{D^{-1}}{\longrightarrow} x^*,$ i.e. $(X,D,\alpha)$ is both left-$K$-complete and right-$K$-complete. Moreover, the sequence $(x_n)_{n\geq 1}$ is left-convergent and right-convergent to the limit $x^*.$

Conversely, assume that $(X,D,\alpha)$ is both left-$K$-complete and right-$K$-complete and let $(x_n)_{n\geq 1}$ be a $D^s$-Cauchy sequence in $(X,D,\alpha)$.
 Hence, 

$\epsilon >0$, there exists $n^*\geq 1$ such that 
$$ \forall \  n,k\geq n^* \quad D^s(x_n,x_k )< \epsilon  .$$	

This means that $\epsilon >0$, there exists $n^* \in \mathbb{N}$ such that 
$$ \forall \  n,k: n^*\leq k \leq n \quad D(x_n,x_k )< \epsilon  ;$$

and 
$$ \forall \  n,k: n^*\leq k \leq n \quad D(x_k,x_n )< \epsilon  ,$$
i.e. the sequence  $(x_n)_{n\geq 1}$ is both is both a left $K$-Cauchy sequence and a right $K$-Cauchy sequence. Since $(X,D,\alpha)$ is both left $K$-complete and right $K$-complete, there exist $a^*,b^*$ such that $x_n \overset{D}{\longrightarrow} a^* \ \text{ and }\ x_n \overset{D^{-1}}{\longrightarrow} b^*$ and by assumption $a^*= b^*.$ Therefore
 $x_n \overset{D}{\longrightarrow} a^* \ \text{ and }\ x_n \overset{D^{-1}}{\longrightarrow} a^*$ which is equivalent to $x_n \overset{D^s}{\longrightarrow} a^* ,$ i.e. $(X,D,\alpha)$ is bicomplete.
 
This completes the proof.

	\end{proof}

\begin{definition}
	A topological space is called a  quasi-metrizable type
	space if there exists a quasi-pseudometric type $D$ inducing the given topology
	on it.
\end{definition}


We have the following interesting lemma 

\begin{proposition}\label{lemma2.5}
	Quasi-metrizability type is preserved under countable Cartesian product provided the series of coefficients is convergent\footnote{Actually, it is enough to have a uniform bound.}.
\end{proposition}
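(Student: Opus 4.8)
The plan is to adapt the classical metrization theorem for countable products, the one extra ingredient being to keep track of the relaxed-triangle-inequality constants. Let $(X_i,D_i,\alpha_i)_{i\ge 1}$ be quasi-metrizable type spaces and equip $X=\prod_{i\ge 1}X_i$ with the product topology (the case of a finite product is easier, with constant $\max_i\alpha_i$). By hypothesis the relevant series of coefficients converges; for the argument I will only use that $\alpha:=\sup_i\alpha_i<\infty$, which is exactly the uniform bound mentioned in the footnote. First, for each $i$ I apply Lemma~\ref{lemma2.1} with $\lambda=2^{-i}$ to obtain an $\alpha_i$-quasi-pseudometric $E_i$ on $X_i$ with $E_i\le 2^{-i}$ and $\tau(E_i)=\tau(D_i)$; then I define
\[
D(x,y)=\sum_{i\ge 1}E_i(x_i,y_i)\qquad (x,y\in X).
\]
The series converges (indeed $D\le\sum_{i\ge 1}2^{-i}=1$) precisely because the series $\sum_{i\ge 1}2^{-i}$ of bounds converges, which is where the hypothesis enters.

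I would then check that $D$ is an $\alpha$-quasi-pseudometric on $X$. Axiom (D1) is immediate since $E_i(x_i,x_i)=0$ for every $i$. For (D2) one argues coordinatewise: given $x,y,z\in X$,
\[
E_i(x_i,y_i)\le\alpha_i\bigl(E_i(x_i,z_i)+E_i(z_i,y_i)\bigr)\le\alpha\bigl(E_i(x_i,z_i)+E_i(z_i,y_i)\bigr)
\]
for each $i$, and summation over $i$ gives $D(x,y)\le\alpha\bigl(D(x,z)+D(z,y)\bigr)$. (If moreover each $D_i$ satisfies the $T_0$-condition, so does each $E_i=\min\{2^{-i},D_i\}$, and then $D(x,y)=D(y,x)=0$ forces $E_i(x_i,y_i)=E_i(y_i,x_i)=0$, hence $x_i=y_i$ for all $i$, so $D$ is a quasi-metric type; for the statement a quasi-pseudometric type already suffices.)

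It remains to identify $\tau(D)$ with the product topology, which I would do by checking that, at each point $x\in X$, the family $\{B_D(x,\varepsilon):\varepsilon>0\}$ and the family of product-basic neighbourhoods of $x$ are mutually cofinal. On the one hand, given $\varepsilon>0$, choose $N$ with $2^{-N}<\varepsilon/2$; for $y$ in the product-basic neighbourhood $\prod_{i\le N}B_{D_i}(x_i,\varepsilon/(2N))\times\prod_{i>N}X_i$ one has $E_i(x_i,y_i)\le D_i(x_i,y_i)<\varepsilon/(2N)$ for $i\le N$ and $E_i(x_i,y_i)\le 2^{-i}$ for $i>N$, so $D(x,y)<\varepsilon/2+2^{-N}<\varepsilon$; thus this product-basic neighbourhood is contained in $B_D(x,\varepsilon)$. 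On the other hand, a product-basic neighbourhood of $x$ has the form $\prod_{i\in F}U_i\times\prod_{i\notin F}X_i$ with $F$ finite and each $U_i$ a $\tau(D_i)$-neighbourhood of $x_i$; I pick $r_i>0$ with $B_{D_i}(x_i,r_i)\subseteq U_i$, set $\rho_i=\min\{r_i,2^{-i}\}$ (so that $B_{E_i}(x_i,\rho_i)=B_{D_i}(x_i,\rho_i)\subseteq U_i$) and $\rho=\min_{i\in F}\rho_i>0$; then $D(x,y)<\rho$ implies $E_i(x_i,y_i)\le D(x,y)<\rho\le\rho_i$, i.e. $y_i\in U_i$, for every $i\in F$, so $B_D(x,\rho)$ is contained in the given product-basic neighbourhood. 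Hence the two topologies have the same neighbourhood filter at every point and therefore coincide, so $X$ is quasi-metrizable type.

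The one genuine obstacle is the control of the constant in (D2): term-by-term summation produces a single admissible relaxed-triangle constant only when the $\alpha_i$ are uniformly bounded, which is why the hypothesis (a convergent, hence bounded, series of coefficients, or directly the uniform bound of the footnote) is imposed. Everything else is routine: the passage from $D_i$ to the bounded equivalents $E_i$ via Lemma~\ref{lemma2.1}, the convergence of the assembling series, and the two cofinality checks above.
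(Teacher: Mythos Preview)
Your proof is correct and follows essentially the same route as the paper: replace each $D_i$ by an equivalent bounded $E_i\le 2^{-i}$ via Lemma~\ref{lemma2.1}, sum to define $D$, and then verify that the $D$-ball topology and the product topology are mutually cofinal at each point. The only notable difference is in the (D2) estimate: the paper bounds $\sum_n K_n\bigl(E_n(x_n,z_n)+E_n(z_n,y_n)\bigr)$ by $\bigl(\sum_n K_n\bigr)\bigl(D(x,z)+D(z,y)\bigr)$, thereby using the convergent-series hypothesis literally and obtaining the constant $K=\sum_n K_n$, whereas you use $\alpha_i\le\sup_i\alpha_i=:\alpha$ termwise to get the sharper constant $\alpha$, which is exactly the uniform-bound refinement advertised in the footnote.
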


\begin{proof}

Let $\{ (X_n,D_n,K_n), n\in \mathbb{N}\}$ be a collection of quasi-metrizable type spaces. Let $\tau_n = \tau(D_n)$ the topology induced by $D_n$ on $X_n$. Set $$(X,\tau) = \prod_{n}(X_n,\tau_n),$$

where $X$ is the Cartesian product of $X_n, n\geq 1$ and $\tau$ the product topology. We have to prove that there is a quasi-metric type $D$ on $X$ which induces the topology $\tau$.

By Lemma \ref{lemma2.1}, we may suppose that $D_n$ is bounded by $2^{-n}$, else we
replace $D_n$ by another quasi-metric type which induces the same topology and which is bounded by $2^{-n}$. For $x,y\in X$, recall that $x=(x_n)_{n\geq 1}$ and $y=(y_n)_{n\geq 1}$. Now define, on $X^2$, the function $D$, as 

$$D(x,y) = \sum_{n\geq 1}D_n(x_n,y_n).$$

Observe that $D$ is well defined since and not extended, since 

$$D(x,y) = \sum_{n\geq 1}D_n(x_n,y_n)\leq \sum_{n\geq 1} 2^{-n} <\infty.$$
Also $D$ is a quasi-metric type on $X$.
Indeed, we clearly have 

$$x = y \iff x_n= y_n \quad \forall n \iff D_n(x_n,y_n) =0=D_n(y_n,x_n) \quad \forall n \iff D(x,y)=0=D(y,x).$$ 

Moreover, for $x=(x_n)_{n\geq 1}, y=(y_n)_{n\geq 1}, z=(z_n)_{n\geq 1} \in X$, we have 

\begin{align*}
 D(x,y) &= \sum_{n\geq 1}D_n(x_n,y_n) \\
        & \leq \sum_{n\geq 1}K_n[D_n(x_n,z_n) + D_n(z_n,y_n)]\\
        &\leq \left[\sum_{n\geq 1}K_n\right] \left(\sum_{n\geq 1}[D_n(x_n,z_n) + D_n(z_n,y_n)]\right)\\
        & = \left( \left[\sum_{n\geq 1}K_n\right]\sum_{n\geq 1}D_n(x_n,z_n) \right)+ \left(\left[\sum_{n\geq 1}K_n\right] \sum_{n\geq 1}D_n(z_n,y_n)\right)\\
        &= K (D(x,z)+D(z,y)
\end{align*}

 because each $D_n$ is of a quasi-metric type and $\sum_{n\geq 1}K_n =K<\infty.$
 
 \vspace*{0.2cm}
 
Let $\mathcal{U}$ be the topology induced by the quasi-metric type $D$.

\vspace*{0.2cm}

 \underline{Claim:} that $\mathcal{U}$ coincides with $\tau.$

\vspace*{0.2cm}

For $x\in X$ and $x\in G \in \mathcal{U}$, there exists $r>0$ such that $$B_D(x,r):=\{ y\in X, D(x,y)<r\} \subset G.$$

Choose $N_0\in \mathbb{N}$ such that $\sum_{n=1}^{N_0} 2^{-n}<\frac{r}{2}$. For each $n=1,2,\cdots N_0$, set $$V_n = B_{D_n}\left(x_,\frac{r}{2N_0}\right):= \left\lbrace y_n\in X_n, D_n(x_n,y_n)<\frac{r}{2N_0}\right\rbrace.$$ Now for $n>N_0$, set $V_n= X_n$ and put $V = \prod_{n\geq 1}V_n.$

Obviously, $x\in V$ and $V$ is an open set in the product topology $\tau$ on $X$. Moreover, for each $y\in V$

\begin{align*}
D(x,y) &= \sum_{n\geq 1}D_n(x_n,y_n) \\
       & = \sum_{n\geq 1}^{N_0}D_n(x_n,y_n) + \sum_{n\geq N_0+1}D_n(x_n,y_n)\\
       &\leq N_0 \frac{r}{2N_0} + \sum_{n\geq N_0+1} 2^{-n}\\
       & < \frac{r}{2} + \frac{r}{2} = r,
       \end{align*}
i.e. $V\subset B_D(x,r)\subset G$. Therefore $G$ is open in the product topology. 

\vspace*{0.2cm}

Conversely suppose $G$ is open in the product topology and let  $x=(x_n)_{n\geq 1}\in G$. Choose a standard basic open set $V$ such that $x\in V \subset G.$ Let $V = \prod_{n\geq 1}V_n$ where each $V_n$ is open in $X_n$ for $n=1,2,\cdots N_0$ and $V_n=X_n$ for all $n> N_0$. For $n=1,2,\cdots N_0$, let $$r_n = \inf \{ D_n(x_n,a_n): a_n \in X_n \setminus V_n \} \quad \text{ if } X_n\neq V_n$$

and $r_n = 2^{-n}$ otherwise. Now let $r = \min \{ r_1,r_2,\cdots, r_{N_0}  \}$. We claim that $B_D(x,r) \subset V$. Indeed for $y=(y_n)_{n\geq 1}\in B_D(x,r),$ we have $D(x,y)= \sum_{n\geq 1}D_n(x_n,y_n) <r$ and so $D_n(x_n,y_n)< r\leq r_n $ for $n=1,2,\cdots N_0$, which means $y_n V_n$ for $n=1,2,\cdots N_0$. Also for $n>N_0, y_n \in V_n=X_n$. Hence $y\in V$ and $B_D(x,r)\subset V\subset G.$ Therefore $G$ is open with respect to the quasi-metric type topology and $\tau\subset \mathcal{U}$. We have proved that the topologies $\tau$ and $\mathcal{U}$ coincide.

\end{proof}

\begin{remark}
One could define a topologically left(right) $K$-complete quasi-metrizable type
space if there exists a topologically left(right) $K$-complete quasi-pseudometric type $D$ inducing the given topology
on it. The author intends, in \cite{gabx} to investigate the features of  topologically left(right) $K$-complete quasi-metrizable type
space. 
\end{remark}

An interesting conjecture, that we intend to establish in \cite{gabx} is that for an $\alpha$-quasi-metric, the induced topology is quasi-metrizable. Indeed:

\begin{conjecture}(Compare \cite[Theorem 1]{aim})
	Let $(X,D,\alpha)$ be an $\alpha$-quasi-pseudometric space. Then there exists $0<\beta=\beta(\alpha)\leq 1$ such that 
	
	\[   \rho(x,y) = \inf \left\lbrace  \sum_{i=1}^{n}D^\beta(x_i,x_{i+1}):x=x_1,x_2,\cdots,x_{n+1}=y \in X, n\in \mathbb{N}\right\rbrace  \]
	is a quasi-pseudometric on $X$ with $\rho^{1/\beta}$ equivalent to $D$.
\end{conjecture}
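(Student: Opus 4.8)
The plan is to run the classical chain (Frink-type) metrization argument, as in \cite{aim}, while keeping track of the constant $\alpha$. First I would rewrite the relaxed triangle inequality (D2) in ``ultrametric-type'' form: for all $x,y,z\in X$,
\[
 D(x,z)\ \le\ \alpha\bigl(D(x,y)+D(y,z)\bigr)\ \le\ 2\alpha\,\max\{D(x,y),D(y,z)\}.
\]
Then comes the crucial choice of exponent: set $\beta=\beta(\alpha):=\dfrac{\ln\sqrt{2}}{\ln(2\alpha)}$, which lies in $(0,\tfrac12]$ since $2\alpha\ge 2$, so that $K:=(2\alpha)^{\beta}=\sqrt{2}$. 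Put $q:=D^{\beta}$; then $q(x,x)=0$, $q\ge 0$, and $q$ satisfies the \emph{sharpened} inequality $q(x,z)\le\sqrt{2}\,\max\{q(x,y),q(y,z)\}$ for all $x,y,z$. (When $\alpha=1$ one may of course simply take $\beta=1$ and $\rho=D$; the conjecture only asks for the existence of an admissible exponent, and a more favourable $\beta$ could be extracted from the additive form but is unnecessary here.) With this $q$, the function $\rho$ of the statement is exactly the chain-infimum of $q$, and the easy half is immediate: $\rho(x,y)\le q(x,y)=D^{\beta}(x,y)$ via the one-step chain, $\rho(x,x)=0$, $\rho\ge 0$, and $\rho(x,z)\le\rho(x,y)+\rho(y,z)$ by concatenating chains and taking infima. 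Hence $\rho$ is a quasi-pseudometric, with genuine triangle constant $1$; no symmetry or $T_0$ is claimed or obtained, consistently with the hypothesis on $D$.

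The substance of the proof is the reverse comparison $\rho(x,y)\ge\tfrac12\,q(x,y)$, which I would deduce from the following \textbf{chain estimate}: for every $x,y\in X$ and every chain $x=x_1,\dots,x_{n+1}=y$, writing $S:=\sum_{i=1}^{n}q(x_i,x_{i+1})$, one has $q(x,y)\le 2S$. This I would prove by strong induction on $n$, the cases $n\le 2$ or $S=0$ being immediate since $\sqrt{2}<2$. For $n\ge 3$ and $S>0$, choose $k\in\{1,\dots,n\}$ minimal with $\sum_{i=1}^{k}q(x_i,x_{i+1})>S/2$; then the left sub-chain $x_1,\dots,x_k$ has total weight $\le S/2$, the right sub-chain $x_{k+1},\dots,x_{n+1}$ has total weight $<S/2$, and $q(x_k,x_{k+1})\le S$. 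The induction hypothesis applied to the two strictly shorter sub-chains gives $q(x_1,x_k)\le S$ and $q(x_{k+1},x_{n+1})\le S$ (both readings being trivially $0$ at the boundaries $k=1$ and $k=n$), whence, for $2\le k\le n-1$, two successive applications of the sharpened inequality, through $x_{k+1}$ and then through $x_k$, give
\[
 q(x_1,x_{n+1})\ \le\ \sqrt{2}\,\max\bigl\{\,q(x_1,x_k),\ \sqrt{2}\,\max\{q(x_k,x_{k+1}),q(x_{k+1},x_{n+1})\}\,\bigr\}\ \le\ \sqrt{2}\cdot\sqrt{2}\cdot S\ =\ 2S,
\]
closing the induction; the boundary cases $k=1$ and $k=n$ need only one application (through $x_{k+1}$, resp.\ through $x_k$) and are even easier. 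It is precisely this composition of \emph{two} triangle-type steps that forces $K^{2}\le 2$, i.e.\ $(2\alpha)^{\beta}\le\sqrt{2}$; the naive requirement $(2\alpha)^{\beta}\le 2$ does not suffice, which is why the logarithm is halved in the definition of $\beta$.

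Finally I would assemble the two bounds into $\tfrac12\,D^{\beta}\le\rho\le D^{\beta}$ pointwise and raise to the increasing power $1/\beta$, obtaining $2^{-1/\beta}\,D\le\rho^{1/\beta}\le D$; thus $\rho^{1/\beta}$ and $D$ are uniformly — a fortiori topologically — equivalent, which is the assertion (equivalently, $\rho^{1/\beta}$ and $D$ generate the same $\alpha$-quasi-uniformity in the sense of Lemma \ref{lem3.1}). The one genuine obstacle is the chain estimate: organizing the weight-balanced split cleanly and, above all, tracking the constants through the double use of the sharpened triangle inequality so that the resulting exponent comes out admissible ($0<\beta\le 1$); the remainder is routine.
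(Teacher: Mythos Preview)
The paper does not prove this statement: it is explicitly labeled a \emph{conjecture}, and the author says only that he ``intend[s] to establish'' it in the forthcoming work \cite{gabx}. There is therefore no proof in the paper to compare your attempt against.

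That said, your argument is correct and is precisely the Frink--Mac\'{\i}as--Segovia chain construction to which the parenthetical ``Compare \cite[Theorem~1]{aim}'' is pointing, transported without change to the asymmetric setting --- nothing in that argument uses symmetry of the distance. The calibration $(2\alpha)^{\beta}=\sqrt{2}$, i.e.\ $\beta=\dfrac{\ln\sqrt{2}}{\ln(2\alpha)}\in(0,\tfrac12]$, is exactly right for the double application of the max-form inequality; the weight-balanced strong induction yielding $q(x_1,x_{n+1})\le 2\sum_{i=1}^{n} q(x_i,x_{i+1})$ is the standard Frink chain lemma; and the resulting sandwich $\tfrac12 D^{\beta}\le\rho\le D^{\beta}$ gives the uniform (hence topological) equivalence of $\rho^{1/\beta}$ and $D$. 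Your handling of the boundary cases $k=1$ and $k=n$ is also correct. In short, you have proved the conjecture along the route the citation already suggests; the only remark is that your $\beta$ is not sharp (you yourself note a better exponent is available from the additive form), but the statement asks only for existence of some $\beta\in(0,1]$, which you provide.
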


\vspace*{0.2cm}

Rectangular ($b$)-metric spaces were presented by Branciari \cite{bran} and Shukla\cite{geo} and this suggest that one replaces the relaxed triangle
inequality in the definition of an  $\alpha$-QPM with the relaxed polygonal inequality, which asserts that there is a constant $K\geq 1$ such that for some $n\geq 2$, for all $x,y,$ and all distinct points $z_1,\cdots,z_{n-1} \in X\setminus\{x,y\},$

\[D(x,y) \leq K[d(x,z_1)+D(z_1,z_2)+\cdots D(z_{n-1},y)] .\]

This leads to the following definition.

\begin{definition}\label{relaxed}
	A relaxed $\alpha$-quasi-pseudometric $D$ (of order $n$) is an $\alpha$-quasi-pseudometric $D$ for which $D$ satisfies the relaxed polygonal inequality, that is 
	\[D(x,y) \leq K[d(x,z_1)+D(z_1,z_2)+\cdots D(z_{n-1},y_)] \]
	for all $x,y,$ and all distinct points $z_1,\cdots,z_{n-1} \in X\setminus\{x,y\}.$
\end{definition}

\begin{remark}
	In view of the above definition, one sees that if $(X,D)$ is a $T_0$ relaxed $\alpha$-quasi-pseudometric; for $K=1, n=3$, we obtain that $D^s(x,y)= \max\{D(x,y),D(y,x)\}$ is a rectangular metric in the sense of Branciari \cite{bran} and for $K\geq 1,n=3$, we obtain a rectangular $b$-metric in the sense of Shukla\cite{geo}. Moreover, for $K=1(\geq 1), n=2 $, the classical $D^s(x,y)= \max\{D(x,y),D(y,x)\}$ gives back the metric (metric type in the sense of Khamsi \cite{khamsi}). Furthermore,  if $(X,D)$ is a $T_0$ relaxed $\alpha$-quasi-pseudometric, then $D^s(x,y)= \max\{D(x,y),D(y,x)\}$ is a metric type in the sense of Gaba\cite{gab}. Note that if $(X,D)$ is a $1$-quasi-pseudometric, then it is a relaxed $1$-quasi-pseudometric with $n=2$ and also a relaxed $1$-quasi-pseudometric with $n\geq 3$ but the reverse is not true, since there are rectangular metric spaces (i.e. relaxed $1$-quasi-pseudometric with $n= 3$) which are not $1$-quasi-pseudometric.
\end{remark}

We have the following diagram where arrows stand for inclusions. The reverse
inclusions do not hold.

\begin{align*} 
\text{quasi-metric space} \quad  & \longrightarrow  \qquad   \quad   \alpha\text{-quasi-metric space} \\
  \downarrow   \qquad  \qquad  \qquad              &    \qquad  \qquad  \qquad \qquad    \downarrow   \\
        \text{relaxed quasi-metric space}  \quad &    \longrightarrow    \quad      \text{relaxed } \alpha\text{-quasi-metric space space}       
\end{align*}

\vspace*{0.2cm}

\begin{lemma}
	
	An $\alpha$-quasi-metric $D$ is a relaxed $\alpha^{n-1}$-quasi-metric (of order $n$).
\end{lemma}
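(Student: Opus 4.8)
The plan is to iterate the relaxed triangle inequality (D2) along a chain of points. Given $x,y \in X$ and distinct intermediate points $z_1, \dots, z_{n-1} \in X \setminus \{x,y\}$, write $z_0 := x$ and $z_n := y$, so that we must bound $D(z_0, z_n)$ by a constant times $\sum_{i=0}^{n-1} D(z_i, z_{i+1})$. First I would apply (D2) to split off the first edge: $D(z_0, z_n) \leq \alpha\big(D(z_0, z_1) + D(z_1, z_n)\big)$. Then I would apply (D2) again to $D(z_1, z_n)$, picking up another factor of $\alpha$, and so on, peeling off one edge at a time.

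The key bookkeeping step is to track the accumulated coefficients. After peeling off $z_1$ we have a factor $\alpha$ in front of both $D(z_0,z_1)$ and the remaining tail; after peeling off $z_2$ the tail term $D(z_1, z_n)$ becomes $\alpha\big(D(z_1,z_2) + D(z_2,z_n)\big)$, so $D(z_1,z_2)$ carries coefficient $\alpha^2$ and the new tail $\alpha^2$; continuing, the edge $D(z_{i}, z_{i+1})$ ends up with coefficient $\alpha^{i+1}$ for $0 \le i \le n-2$ (with the first edge $D(z_0,z_1)$ getting $\alpha$), and the final edge $D(z_{n-1}, z_n)$ also gets $\alpha^{n-1}$. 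Since $\alpha \geq 1$, every one of these coefficients is bounded above by $\alpha^{n-1}$, and therefore
\[
D(x,y) \;\leq\; \alpha^{n-1}\Big(D(x,z_1) + D(z_1,z_2) + \cdots + D(z_{n-1},y)\Big),
\]
which is exactly the relaxed polygonal inequality of Definition \ref{relaxed} with constant $K = \alpha^{n-1}$. Since $D$ is already an $\alpha$-quasi-metric, it satisfies (D1) and the $T_0$-condition, so it is a relaxed $\alpha^{n-1}$-quasi-metric of order $n$.

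I do not anticipate a serious obstacle here; the only care needed is in the induction on the length of the chain to justify that all coefficients are at most $\alpha^{n-1}$ — a clean way to present this is to prove by induction on $m \geq 1$ that $D(z_0, z_m) \leq \alpha^{m-1}\sum_{i=0}^{m-1} D(z_i, z_{i+1})$, with the base case $m=1$ trivial and the inductive step applying (D2) once and using $\alpha \cdot \alpha^{m-2} \le \alpha^{m-1}$ together with $\alpha \le \alpha^{m-1}$ for $m \ge 2$; then specialize to $m = n$. The distinctness hypothesis on the $z_i$ plays no role in the estimate itself — it is only part of the statement of the relaxed polygonal inequality — so it can simply be carried along.
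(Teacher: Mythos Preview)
Your proposal is correct and follows essentially the same approach as the paper: iterate the relaxed triangle inequality (D2) along the chain $x=z_0, z_1,\dots,z_{n-1}, z_n=y$, peeling off one edge at a time, and then bound all the resulting coefficients $\alpha,\alpha^2,\dots,\alpha^{n-1}$ uniformly by $\alpha^{n-1}$ using $\alpha\ge 1$. The only cosmetic difference is that the paper absorbs each coefficient into the running maximum power at every step, whereas you track the exact powers and bound them at the end; your suggested induction on the chain length is a clean way to package the same computation.
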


\begin{proof}
	Let $(X,D)$ be an $\alpha$-quasi-metric space. Let $z_1,\cdots,z_{n-1} \in X\setminus\{x,y\}$ be distinct points. Then we have,
	
	\begin{align*}
		D(x,y) & \leq \alpha [D(x,z_1) + D(z_1,y)]\\
		       & \leq \alpha [D(x,z_1) + \alpha(D(z_1,z_2)+D(z_2,y)  ] \\
		       & = \alpha^2 D(x,z_1) + \alpha^2 D(z_1,z_2)+\alpha^2 D(z_2,y)  \\
		       & \leq \alpha^2 D(x,z_1)+ \alpha^2 D(z_1,z_2)+\alpha^2 [\alpha D(z_2,z_3)+\alpha D(z_3,y)  ]\\
		       & \leq \alpha^3 D(x,z_1)+ \alpha^3 D(z_1,z_2)+\alpha^3  D(z_2,z_3)+\alpha^3D(z_3,y)\\
		       & \vdots\\
		       & \leq \alpha^{n-1} D(x,z_1)+ \alpha^{n-1}  D(z_1,z_2)+\alpha^{n-1}   D(z_2,z_3)+\cdots +\alpha^{n-1} D(z_{n-1},y).
	\end{align*} 
	The proof is complete.
\end{proof}

We conclude this section by a few fixed point related results in $\alpha$-quasi-pseudometric spaces.

\begin{definition} (Compare \cite[Definition 33]{gaba})
	For a $K$-quasi-pseudometric space $(X, D,K)$, a function $f : X \to X$ is a Lipschitz
	map with bound $q$ if $q$ is such that for each $x, y \in X$, $D(f (x), f (y)) \leq q D(x, y)$. The smallest constant $q$ verifying the previous inequality will be denoted $Lip(f)$.

	Moreover, we shall say that $f$ is a
	contraction
	if it is a Lipschitz map with bound
	$q< \frac{1}{K^2}$.
\end{definition}

\begin{lemma}\label{lem2.2}
	Let $(X, D,K)$ be a $K$-quasi-pseudometric space and $f : X \to X$ a Lipschitz map with bound $q$ then $f$ is uniformly continuous.
\end{lemma}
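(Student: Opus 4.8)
The plan is to show that the map $f$ pulls back the basic entourages $N_\epsilon$ of the $\alpha$-quasi-uniformity $\mathcal{U}_D$ (introduced in Lemma~\ref{lem3.1}) into other basic entourages, which is exactly the uniform-continuity condition with respect to $\mathcal{U}_D$, and then to translate this back into the usual $\epsilon$--$\delta$ form. First I would fix $\epsilon>0$ and set $\delta=\epsilon/(q+1)$ (any $\delta\le\epsilon/q$ works; the $+1$ just covers the degenerate case $q=0$). The key computation is a one-liner: if $x,y\in X$ satisfy $D(x,y)<\delta$, then by the Lipschitz bound $D(f(x),f(y))\le qD(x,y)<q\delta\le\epsilon$. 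Hence $(x,y)\in N_\delta$ implies $(f(x),f(y))\in N_\epsilon$, i.e. $(f\times f)(N_\delta)\subseteq N_\epsilon$.

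Next I would record that this is uniform continuity in the intended sense: for every entourage $U\in\mathcal{U}_D$ there is an entourage $V\in\mathcal{U}_D$ with $(f\times f)(V)\subseteq U$. Given $U\in\mathcal{U}_D$, pick $\epsilon>0$ with $N_\epsilon\subseteq U$, then take $V=N_\delta$ with $\delta$ as above; the inclusion $(f\times f)(N_\delta)\subseteq N_\epsilon\subseteq U$ is what was just shown. If instead one wants the phrasing purely in terms of $D$, the same $\delta$ witnesses that $D(x,y)<\delta\Rightarrow D(f(x),f(y))<\epsilon$ for all $x,y$, which is the $\epsilon$--$\delta$ definition of (uniform) continuity for the quasi-pseudometric type structure and does not depend on the base point.

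I do not anticipate a real obstacle here: the relaxed triangle inequality constant $K$ plays no role at all, since the argument never chains two applications of (D2) — it only uses (D1) implicitly (so that $N_\epsilon$ really is an entourage, already handled in Lemma~\ref{lem3.1}) and the Lipschitz estimate directly. The only minor point to be careful about is the case $q=0$, where $f$ is constant on each ``$D$-component'' and the claim is trivial; choosing $\delta=\epsilon/(q+1)$ sidesteps having to treat it separately. One should also note that, strictly speaking, this gives uniform continuity from $(X,\tau(D))$ to $(X,\tau(D))$, i.e. for the left topology; the dual statement for $D^{-1}$ follows by the symmetric argument applied to the Lipschitz bound $D^{-1}(f(x),f(y))=D(f(y),f(x))\le qD(y,x)=qD^{-1}(x,y)$, and hence $f$ is also uniformly continuous for $\tau(D^s)$.
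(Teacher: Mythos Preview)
Your argument is correct and is essentially the same one-line $\epsilon$--$\delta$ computation the paper gives: for $r>0$ take $s=r/q$ and use $D(f(x),f(y))\le qD(x,y)<qs=r$. Your version is slightly more careful (you handle $q=0$ and add the entourage/uniformity and dual-topology remarks), but the core idea is identical.
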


\begin{proof}
	For each $r > 0$ and $x,\in X$, let $s =\frac{r}{q};$ then if $y\in X$ is such that $d(x, y) < s$ then we have $$d(f (x), f (y)) < qd(x, y) < qs = q\frac{r}{q} = r. $$
\end{proof}

\begin{lemma}\label{lem2.3}
	
	Let $(X, D,K)$ a $K$-quasi-pseudometric space and $f : X \to X$ a Lipschitz map with bound $q$. Then for each $x \in X$ and any $n \geq 0$, we have
	$$D(x,f^n(x)) \leq \left[\sum_{i=1}^{n} K^i q^{i-1}\right] D(x,f(x)).$$
	
\end{lemma}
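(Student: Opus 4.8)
The plan is to prove the estimate by induction on $n$, using the relaxed triangle inequality together with the Lipschitz bound on $f$. For $n=0$ the right-hand sum is empty, so the inequality reads $D(x,x)\le 0$, which holds by (D1); the case $n=1$ reads $D(x,f(x))\le K\cdot q^0\, D(x,f(x))=K\,D(x,f(x))$, which is true since $K\ge 1$. So the base cases are immediate.

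For the inductive step, assume the bound holds for some $n\ge 1$ and estimate $D(x,f^{n+1}(x))$. First I would insert the point $f(x)$ via (D2):
\begin{align*}
D(x,f^{n+1}(x)) &\leq K\bigl[D(x,f(x)) + D(f(x),f^{n+1}(x))\bigr].
\end{align*}
Next, since $f^{n+1}(x)=f^n(f(x))$ and $f$ is a Lipschitz map with bound $q$ — so that $D(f(u),f(v))\le q\,D(u,v)$ and hence, by an easy auxiliary induction, $D(f^k(u),f^k(v))\le q^k D(u,v)$ — applying this with the pair $(x,f^n(x))$ gives $D(f(x),f^{n+1}(x))=D\bigl(f(x),f(f^n(x))\bigr)\le q\,D(x,f^n(x))$. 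Then I invoke the inductive hypothesis on $D(x,f^n(x))$:
\begin{align*}
D(x,f^{n+1}(x)) &\leq K\,D(x,f(x)) + K q\,D(x,f^n(x)) \\
 &\leq K\,D(x,f(x)) + K q\left[\sum_{i=1}^{n} K^i q^{i-1}\right] D(x,f(x)) \\
 &= \left[K + \sum_{i=1}^{n} K^{i+1} q^{i}\right] D(x,f(x)) \\
 &= \left[\sum_{i=1}^{n+1} K^i q^{i-1}\right] D(x,f(x)),
\end{align*}
where in the last step the reindexing $j=i+1$ turns $\sum_{i=1}^n K^{i+1}q^i$ into $\sum_{j=2}^{n+1}K^j q^{j-1}$, and the stray term $K=K^1 q^0$ fills in the $j=1$ slot. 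This closes the induction.

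The only genuinely delicate point is the observation that a Lipschitz bound $q$ for $f$ propagates to a bound $q^k$ for $f^k$; this itself needs a short induction and uses that $D(f^{k}(u),f^{k}(v)) = D(f(f^{k-1}(u)),f(f^{k-1}(v))) \le q\,D(f^{k-1}(u),f^{k-1}(v))$, with no appeal to the triangle inequality. Everything else is bookkeeping with the geometric-type sum. Note the argument nowhere requires $q<1/K^2$, so the lemma holds for an arbitrary Lipschitz map; the contraction hypothesis will only matter later when one wants the series $\sum_i K^i q^{i-1}$ to converge.
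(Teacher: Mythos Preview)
Your proof is correct and follows essentially the same route as the paper: induction on $n$, splitting $D(x,f^{n+1}(x))$ via the relaxed triangle inequality at the point $f(x)$, and then combining the Lipschitz bound with the inductive hypothesis. The only cosmetic difference is the order in which those two ingredients are applied to $D(f(x),f^{n+1}(x))$: the paper first invokes the inductive hypothesis at the base point $f(x)$ (bounding $D(f(x),f^n(f(x)))$ by the sum times $D(f(x),f^2(x))$) and then uses Lipschitz on $D(f(x),f^2(x))$, whereas you first use Lipschitz to get $q\,D(x,f^n(x))$ and then apply the inductive hypothesis at $x$; both lead to the identical reindexing step. Your aside about $f^k$ having Lipschitz bound $q^k$ is true but not actually used in the argument --- only $k=1$ is needed.
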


\begin{proof}
	We shall use an inductive argument. Indeed for $n=0$ and $x \in X$, we trivially have
	\[ D(x,f^0(x)) = D(x,x)=0 \leq \left[\sum (\emptyset) \right] D(x,f(x)) .\]
	Assume the inequality for $n$ and all $x \in X$. Using the relaxed triangle inequality of the $K$-quasi-metric space and the inductive hypothesis, we write:
	
	\begin{align*}
	D(x,f^{n+1}(x)) & \leq K [D(x,f(x)) + D(f(x),f^{n+1}(x))] \\
	& \leq K D(x,f(x))  + K D(f(x),f^{n}f(x))\\
	&\leq K D(x,f(x)) + K \left[\sum_{i=1}^{n} K^i q^{i-1}\right]  D(f(x),f^2(x)) \\
	&\leq K D(x,f(x)) + K \left[\sum_{i=1}^{n} K^i q^{i-1}\right]q  D(x,f(x)) \\
	&\leq K D(x,f(x)) + K \left[\sum_{i=1}^{n} K^{i+1} q^{i}\right] D(x,f(x)) \\
	& =  K D(x,f(x)) + K \left[\sum_{i=2}^{n+1} K^{i} q^{i-1}\right] D(x,f(x)) \\
	& =\left[\sum_{i=1}^{n+1} K^i q^{i-1}\right] D(x,f(x)).
	\end{align*}
	
	So the inequality holds for $n + 1$ and arbitrary $x \in X$, completing our inductive argument.
\end{proof}

\begin{theorem}
Let $(X, d,\alpha)$ be a Hausdorff left $K$-complete $\alpha$-quasi-pseudometric such that $\alpha>1$. If $f:X\to X$ is a contraction, then $f$ has a fixed point.
\end{theorem}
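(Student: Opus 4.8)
The plan is to run the classical Banach iteration argument, now bookkeeping the constant $\alpha$ throughout. Fix $x_0 \in X$ and define the Picard sequence $x_n = f^n(x_0)$. First I would record that since $f$ is a contraction with bound $q < 1/\alpha^2$, the key series $\sum_{i\ge 1}\alpha^i q^{i-1} = \alpha \sum_{i\ge 0}(\alpha q)^i$ converges, because $\alpha q < \alpha \cdot \alpha^{-2} = \alpha^{-1} \le 1$; call its sum $M$. Lemma \ref{lem2.3} then gives, for all $n \ge 0$, the uniform bound $D(x_0, x_n) \le M\, D(x_0, f(x_0))$, and applied with base point $x_m$ in place of $x_0$ it yields
\[
D(x_m, x_{m+k}) \le \left[\sum_{i=1}^{k}\alpha^i q^{i-1}\right] D(x_m, x_{m+1}) \le M\, D(x_m, x_{m+1}).
\]
Iterating the Lipschitz bound $D(x_m, x_{m+1}) = D(f^m(x_0), f^m(x_1)) \le q^m D(x_0, x_1)$, we get $D(x_m, x_{m+k}) \le M q^m D(x_0, x_1) \to 0$ as $m \to \infty$, uniformly in $k$. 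Hence $(x_n)$ is left $K$-Cauchy.

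Next I would invoke left $K$-completeness: there is $x^* \in X$ with $x_n \overset{d}{\longrightarrow} x^*$, i.e. $D(x_n, x^*) \to 0$. To see that $x^*$ is fixed, I would estimate $D(x^*, f(x^*))$ using the relaxed triangle inequality twice:
\[
D(x^*, f(x^*)) \le \alpha D(x^*, x_{n+1}) + \alpha^2 D(x_{n+1}, f(x^*)) \ \text{, with}\ D(x_{n+1}, f(x^*)) = D(f(x_n), f(x^*)) \le q\, D(x_n, x^*),
\]
where I have split $D(x^*, f(x^*)) \le \alpha D(x^*, x_{n+1}) + \alpha D(x_{n+1}, f(x^*))$ directly (one application suffices). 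Both terms on the right tend to $0$, so $D(x^*, f(x^*)) = 0$. Symmetrically, from $D(f(x^*), x^*) \le \alpha D(f(x^*), x_{n+1}) + \alpha D(x_{n+1}, x^*)$ and $D(f(x^*), x_{n+1}) = D(f(x^*), f(x_n)) \le q D(x^*, x_n)$ — wait, this requires right-convergence, which we do not have; instead I would use $D(f(x^*), x_{n+1}) \le \alpha D(f(x^*), f(x^*)) + \cdots$; more cleanly, since $D(x^*, f(x^*)) = 0$ we may compute $D(f(x^*), x^*) \le \alpha D(f(x^*), f(x_n)) + \alpha D(f(x_n), x^*)$ is still not directly controlled. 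The cleanest route: having shown $D(x^*, f(x^*)) = 0$, note $f$ is continuous (Lemma \ref{lem2.2}), so $f(x_n) \overset{d}{\longrightarrow} f(x^*)$, i.e. $x_{n+1} \overset{d}{\longrightarrow} f(x^*)$; but also $x_{n+1} \overset{d}{\longrightarrow} x^*$, and since $\tau(d)$ is Hausdorff (this is exactly where the Hausdorff hypothesis enters), limits are unique, so $x^* = f(x^*)$.

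The main obstacle I anticipate is the passage from $D(x^*, f(x^*)) = 0$ to $x^* = f(x^*)$: in a $T_0$ (or merely quasi-pseudometric) setting one-sided vanishing of the distance is not enough, so one genuinely needs the Hausdorff assumption on $\tau(d)$ to force uniqueness of the left-limit of $(x_{n+1})$, which is simultaneously $x^*$ and $f(x^*)$ by continuity of $f$. A secondary point to handle carefully is that Lemma \ref{lem2.3} is stated for the base point of the iteration; re-anchoring it at $x_m$ is legitimate since the estimate there is uniform in the starting point, but I would state this explicitly. Uniqueness of the fixed point, although not asserted in the theorem, would follow the same way: if $x^* , y^*$ are both fixed then $D(x^*, y^*) = D(f(x^*), f(y^*)) \le q D(x^*, y^*)$ with $q < 1$ forces $D(x^*, y^*) = 0$, and symmetrically $D(y^*, x^*) = 0$, whence $x^* = y^*$ by $T_0$-ness — but I would not include this unless the referee asks, since it is outside the stated claim.
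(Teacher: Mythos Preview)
Your proof is correct and follows essentially the same route as the paper: you establish the left $K$-Cauchy property via Lemma~\ref{lem2.3} combined with the iterated Lipschitz bound $D(x_m,x_{m+1})\le q^m D(x_0,x_1)$, then use left $K$-completeness, continuity of $f$ (Lemma~\ref{lem2.2}), and the Hausdorff hypothesis to identify the two left-limits $x^*$ and $f(x^*)$ of $(x_{n+1})$. The only cosmetic difference is the order of operations in the Cauchy estimate --- the paper applies the Lipschitz bound first to pull out $q^n$ and then invokes Lemma~\ref{lem2.3} at the base point $x_0$, whereas you re-anchor Lemma~\ref{lem2.3} at $x_m$ and then apply Lipschitz --- but the resulting inequality is identical; your exploratory detour toward showing $D(x^*,f(x^*))=0$ directly is unnecessary once you settle on the continuity\,+\,Hausdorff argument, which is exactly what the paper does.
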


\begin{proof}

	Let $x_0 \in X$ and inductively define the sequence $x_n= f(x_{n-1})$, hence for each $n\in \mathbb{N}, x_n = f^n(x_0).$ Now we show that $(x_n)$ is left $K$-Cauchy. Let $\varepsilon>0$; there exists $N_1$ such that $$\frac{1}{\alpha^{2N_1-2}}\frac{1}{\alpha-1}d(x_0,x_1)<\varepsilon.$$
	
	Now let $m\geq n\geq N_1$ and since $f$ is Lipschitz with bound $q< \frac{1}{\alpha^2}$ by Lemma \ref{lem2.3}, we have 
	
	 \begin{align*}
	 	d(x_n,x_m) = d(f^n(x_0),f^m(x_0) &\leq q^n d(x_0,f^{m-n}(x_0)) \leq q^n \left( \sum_{i=1}^{m-n}\alpha^i q^{i-1}\right)d(x_0,fx_0)\\
	 	&\leq \frac{1}{\alpha^{2n}}\left(\sum_{i=1}^{m-n}\frac{1}{\alpha^{i-2}}\right)d(x_0,x_1)\leq \frac{1}{\alpha^{2n}}\left(\sum_{i=1}^{\infty}\frac{1}{\alpha^{i-2}}\right)d(x_0,x_1)\\
	 	&=\frac{1}{\alpha^{2n-2}}\left(\sum_{i=1}^{\infty}\frac{1}{\alpha^{i}}\right)d(x_0,x_1)= \frac{1}{\alpha^{2n-2}}\frac{1}{\alpha-1}d(x_0,x_1)\\
	 	& \leq \frac{1}{\alpha^{2N_1-2}}\frac{1}{\alpha-1}d(x_0,x_1)< \varepsilon.
	 \end{align*}
This proves that $(x_n)$ is a left $K$-Cauchy sequence and since $(X,d,\alpha)$ is left $K$-complete, there exists some $x^*\in X$ such that $x_n \overset{d}{\longrightarrow} x^*$ and $x_{n+1} \overset{d}{\longrightarrow} x^*$. Thus by continuity of $f$ shown in Lemma \ref{lem2.2}, we have:

$$f(x_n) \overset{d}{\longrightarrow} f(x^*) \Longleftrightarrow x_{n+1} \overset{d}{\longrightarrow} f(x^*), $$
i.e. $f(x^*)=x^*$ since $(X,d,\alpha)$ is Hausdorff.
\end{proof}

For more results regarding fixed point theory in $K$-quasi-metric spaces, the interested reader could read the article by Gaba\cite{gaba2}. In that paper, the author mostly considered Hausdorff left-complete $T_0$-quasi-pseudometric type spaces.

\section{Main results}

We can now introduce our main new concept.

\begin{definition}\label{maindef} (Compare \cite[Definition 1.]{kun})
A partial quasi-metric type (or $K$-partial quasi-metric ) on a set $X$ is a function $p: X \times X \to [0, \infty)$ such that:
\begin{enumerate}
\item[(1a)] $p(x, x)\leq p(x, y)$ whenever $x, y \in X$,
\item[(1b)] $p(x, x)\leq p(y, x)$ whenever $x, y \in X$,
\item[(2)] $p(x, z) + p(y, y) \leq K( p(x, y) + p(y, z))$ whenever $x, y,z \in X$, for some $K\geq 1,$
\item[(3)] $x = y$ iff $(p(x, x) = p(x, y)$ and $p(y, y) = p(y, x))$ whenever $x, y \in X$.
\end{enumerate}

The triplet $(X, p,K)$ will be called $K$-partial quasi-metric space.

If $p$ satisfies all these conditions except possibly (1b), we shall speak of a \textit{lopsided partial quasi-metric type} or a \textit{lopsided $K$-partial quasi-metric}.
\end{definition}

\begin{remark}
If $p$ is a $K$-partial quasi-metric on $X$ satisfying
$(4)$ $p(x, y) = p(y, x)$ whenever $x, y \in X$, then $p$ is called a partial $K$-metric on $X$ in the sense of Shukla \cite{shu}. Moreover, similarly to Definition \ref{relaxed}, one could easily define a relaxed $\alpha$-partial quasi-metric $D$. 
\end{remark}

\begin{lemma}Compare \cite[Lemma 1.]{kun}
 Any lopsided $K$-partial quasi-metric $p$ on a set $X$ also satisfies the condition:
 
$(3')$ $x = y$ iff $(p(x, x) = p(y, x)$ and $p(y, y) = p(x, y))$ whenever $x, y \in X$.
\end{lemma}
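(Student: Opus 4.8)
The plan is to treat the two directions of the biconditional $(3')$ separately. The forward implication is immediate, and all the (modest) content sits in the converse, which nonetheless turns out to be short because it reduces back to axiom (3).

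For the forward implication I would simply substitute: if $x=y$, then $p(y,x)=p(x,x)$ and $p(x,y)=p(y,y)$, so both equalities in $(3')$ hold automatically. No axiom is needed here.

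For the converse, assume $p(x,x)=p(y,x)$ and $p(y,y)=p(x,y)$. The key step is to feed these into axiom (1a). Applying (1a) to the ordered pair $(x,y)$ gives $p(x,x)\le p(x,y)$, and applying it to the ordered pair $(y,x)$ gives $p(y,y)\le p(y,x)$. Chaining these two inequalities with the two hypotheses yields
\[
p(x,x) \le p(x,y) = p(y,y) \le p(y,x) = p(x,x),
\]
so the four quantities $p(x,x)$, $p(x,y)$, $p(y,y)$, $p(y,x)$ all coincide. In particular $p(x,x)=p(x,y)$ and $p(y,y)=p(y,x)$, which are precisely the hypotheses of axiom (3); hence (3) forces $x=y$, completing the converse.

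The point worth flagging — more an observation than an obstacle — is that this argument uses only (1a) and (3): it never invokes (1b) (the axiom a \emph{lopsided} $K$-partial quasi-metric is permitted to violate) nor the relaxed triangle inequality (2), and the constant $K$ plays no role. So $(3')$ is a purely formal consequence of (3) together with the small-self-distance condition on the first coordinate, and the lemma in fact holds for any $p\colon X\times X\to[0,\infty)$ satisfying (1a) and (3).
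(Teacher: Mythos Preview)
Your proof is correct. The paper itself does not supply a proof of this lemma; it merely states it with a reference to the analogous result in K\"unzi et al. Your argument---using (1a) twice to force all four values $p(x,x)$, $p(x,y)$, $p(y,y)$, $p(y,x)$ to coincide and then invoking (3)---is exactly the standard short argument one would expect, and your observation that only (1a) and (3) are needed (so that (1b), (2), and the constant $K$ play no role) is accurate and worth recording.
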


\begin{lemma}Compare \cite[Lemma 2.]{kun}

\begin{enumerate}
	\item[(a)] Each $K$-quasi-metric $p$ on X is a $K$-partial quasi-metric on $X$ with $p(x, x) = 0$ whenever $x \in X$.
	\item[(b)] If $p$ is a $K$-partial quasi-metric (resp. a $K$-quasi-metric) on $X$, then so is its conjugate $p^{-1} (x, y) = p(y, x)$ whenever $ x, y \in X$.
	\item[(c)] If $p$ is a $K$-partial quasi-metric (resp. a $K$-quasi-metric) on $X$, then $p^+$ defined by $p^+(x, y) = p(x, y) + p^{-1}(x, y)$ is a partial $K$-metric (resp. a $K$-metric) on $X$.

\end{enumerate}

\end{lemma}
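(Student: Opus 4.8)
The strategy is to check each of the three items by unwinding, one at a time, the four axioms of Definition \ref{maindef} and reducing them to the defining properties of $p$; the only extra inputs needed are the $T_0$-condition and condition $(3')$ from the preceding lemma. For item (a), let $p$ be a $K$-quasi-metric, so $p(x,x)=0$, the relaxed triangle inequality $p(x,z)\le K(p(x,y)+p(y,z))$ holds, and $p(x,y)=p(y,x)=0$ forces $x=y$. Nonnegativity of $p$ gives (1a) and (1b) at once, since $p(x,x)=0\le p(x,y)$ and $p(x,x)=0\le p(y,x)$. Substituting $p(y,y)=0$ turns axiom (2) into exactly the relaxed triangle inequality. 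For (3) the forward direction is trivial, and conversely $p(x,x)=p(x,y)$ and $p(y,y)=p(y,x)$ read $p(x,y)=0=p(y,x)$, whence $x=y$ by the $T_0$-condition; the clause $p(x,x)=0$ is part of the hypothesis.

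For item (b), I would verify the axioms for $p^{-1}(x,y)=p(y,x)$ by swapping arguments. Axiom (1a) for $p^{-1}$ is $p(x,x)\le p(y,x)$, which is (1b) for $p$, and axiom (1b) for $p^{-1}$ is (1a) for $p$. Axiom (2) for $p^{-1}$ unwinds to $p(z,x)+p(y,y)\le K(p(y,x)+p(z,y))$, which is axiom (2) for $p$ applied to the triple $(z,y,x)$. Axiom (3) for $p^{-1}$ reads ``$x=y$ iff $p(x,x)=p(y,x)$ and $p(y,y)=p(x,y)$'', i.e. precisely condition $(3')$, which holds for $p$ because any $K$-partial quasi-metric is in particular lopsided, so the preceding lemma applies. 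For the parenthetical claim, if $p$ is a $K$-quasi-metric then $p^{-1}(x,x)=p(x,x)=0$, the relaxed triangle inequality for $p^{-1}$ is that for $p$ with $x$ and $z$ interchanged, and the $T_0$-condition is symmetric under the swap.

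For item (c), write $p^{+}(x,y)=p(x,y)+p(y,x)$; symmetry is immediate, so it suffices to check that $p^{+}$ is a $K$-partial quasi-metric and then to invoke the fact that a symmetric $K$-partial quasi-metric is exactly a partial $K$-metric in the sense of Shukla. For (1a), adding $p(x,x)\le p(x,y)$ (axiom (1a) for $p$) and $p(x,x)\le p(y,x)$ (axiom (1b) for $p$) gives $p^{+}(x,x)=2p(x,x)\le p^{+}(x,y)$, and (1b) for $p^{+}$ follows from symmetry. For (2), adding axiom (2) for $p$ on $(x,y,z)$ to axiom (2) for $p$ on the reversed triple $(z,y,x)$ yields $p^{+}(x,z)+2p(y,y)\le K\,p^{+}(x,y)+K\,p^{+}(y,z)$, which is axiom (2) for $p^{+}$. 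For (3), the hypotheses $p^{+}(x,x)=p^{+}(x,y)$ and $p^{+}(y,y)=p^{+}(y,x)$ force equality in the pairs of inequalities just used in (1a)--(1b), hence $p(x,x)=p(x,y)$ and $p(y,y)=p(y,x)$, and axiom (3) for $p$ gives $x=y$. In the ``$K$-quasi-metric'' case one substitutes $p(x,x)=p(y,y)=0$, so $p^{+}(x,x)=0$, the triangle inequality for $p^{+}$ follows by adding the two relaxed triangle inequalities for $p$, and $p^{+}(x,y)=0$ forces $p(x,y)=p(y,x)=0$, hence $x=y$ by the $T_0$-condition, so $p^{+}$ is a $K$-metric.

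I expect the whole argument to be routine; the only place demanding genuine care is item (c), where one must notice that the symmetric inequalities for $p^{+}$ arise by pairing each axiom of $p$ on a triple with the same axiom on the reversed triple, and that the equality characterization in (3) comes from reading those sum inequalities as equalities. Keeping straight which argument-swap of which axiom of $p$ produces which axiom of $p^{-1}$ or $p^{+}$ is the main point of attention.
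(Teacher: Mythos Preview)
Your verification is correct. The paper itself offers no proof for this lemma, treating it as a routine transcription of \cite[Lemma 2.]{kun} to the $K$-setting, so there is nothing to compare against beyond noting that your axiom-by-axiom check is exactly the kind of argument the citation is meant to invoke; your use of condition $(3')$ from the preceding lemma for part (b) and the ``pair each inequality with its reversed-triple counterpart'' device for part (c) are the natural moves.
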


\begin{remark}
It is clear that every partial quasi-metric space is a $K$-partial quasi-metric
space with coefficient $K = 1$ and every $K$-quasi-metric space is a $K$-partial quasi-metric
space with the same coefficient and zero self-distance.
\end{remark}

\begin{example}
	Let $X =[0,\infty)$ , $p > 1$ a constant and $b : X \times X \to [0,\infty)$ be defined:
	$$ b(x,y) = [\max\{x,y\}]^p + [\max\{0,x-y\}]^p  \quad \text{for all } x,y \in X.$$
	Then (X, b) is a $K$-partial quasi-metric space with coefficient $K = 2^p$ but
	it is neither a $K$-quasi-metric nor a partial quasi-metric space. Indeed, for $x>0$, $b(x,x) = x^p \neq 0$; therefore, $b$ is not a $K$-quasi-metric on $X$. Also, for $x = 5, y = 2, z = 4$, we have $b(x, y)=5^p + 3^p$ and $b(x,z) + b(z,y)-b(z,z) = 5^p+1+4^p+2^p-4^p$, so $b(x,y) >b(x,z) + b(z,y)-b(z,z)$ for all $p > 1$; therefore, $b$ is not a partial quasi-metric on $X$.
	
\end{example}

The following proposition allows us to construct $K$-partial quasi-metrics from existing ones.

\begin{proposition} Compare \cite[Proposition 1.]{shu}
	Let $X$ be a nonempty set such that $p$ is a partial quasi-metric and $d$ an
	$\alpha$-quasi-metric with coefficient $\alpha > 1$ on $X$. Then the function $b : X \times X \to [0,\infty)$
	defined by $b(x, y) = p(x, y) + d(x, y)$ whenever $x, y \in X$ is an $\alpha$-partial quasi-metric on
	$X$.
\end{proposition}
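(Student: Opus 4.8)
The plan is to verify the three defining properties of a $K$-partial quasi-metric (here with $K=\alpha$) for the function $b(x,y)=p(x,y)+d(x,y)$, using the fact that $p$ is a partial quasi-metric (so it satisfies the $\alpha$-partial quasi-metric axioms with $K=1$) and $d$ is an $\alpha$-quasi-metric (so $d(x,x)=0$ and $d$ satisfies the relaxed triangle inequality with constant $\alpha$). Since $b$ is visibly nonnegative and real-valued, the only work is to check (1a), (1b), (2) and (3) of Definition \ref{maindef}.

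First I would handle the small-inequality axioms (1a) and (1b). For (1a): since $p$ is a partial quasi-metric, $p(x,x)\le p(x,y)$; and since $d$ is an $\alpha$-quasi-metric, $d(x,x)=0\le d(x,y)$. Adding these two inequalities gives $b(x,x)=p(x,x)+0\le p(x,y)+d(x,y)=b(x,y)$. The argument for (1b) is identical with the arguments of $p$ and $d$ swapped in the second slot. Next, for the relaxed triangle-type inequality (2), I would write $b(x,z)+b(y,y)=\bigl(p(x,z)+p(y,y)\bigr)+\bigl(d(x,z)+d(y,y)\bigr)$ and bound each parenthesised term: the partial quasi-metric inequality for $p$ (with $K=1$) gives $p(x,z)+p(y,y)\le p(x,y)+p(y,z)$, and since $d(y,y)=0$ the relaxed triangle inequality for $d$ gives $d(x,z)+d(y,y)=d(x,z)\le \alpha\bigl(d(x,y)+d(y,z)\bigr)$. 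Adding, and using $1\le\alpha$ to absorb the $p$-terms into the same constant, yields $b(x,z)+b(y,y)\le \alpha\bigl(p(x,y)+d(x,y)\bigr)+\alpha\bigl(p(y,z)+d(y,z)\bigr)=\alpha\bigl(b(x,y)+b(y,z)\bigr)$, which is (2) with $K=\alpha$.

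For the $T_0$-type axiom (3) I would argue both implications. If $x=y$ then trivially $b(x,x)=b(x,y)$ and $b(y,y)=b(y,x)$. Conversely, suppose $b(x,x)=b(x,y)$ and $b(y,y)=b(y,x)$; writing these out, $p(x,x)+d(x,x)=p(x,y)+d(x,y)$, i.e. $p(x,x)=p(x,y)+d(x,y)$ since $d(x,x)=0$, and similarly $p(y,y)=p(y,x)+d(y,x)$. Because $p(x,x)\le p(x,y)$ (axiom (1a) for $p$) and $d(x,y)\ge 0$, the equation $p(x,x)=p(x,y)+d(x,y)$ forces both $p(x,x)=p(x,y)$ and $d(x,y)=0$; symmetrically $p(y,y)=p(y,x)$ and $d(y,x)=0$. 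The two $p$-equalities give $x=y$ by the $T_0$ axiom (3) for the partial quasi-metric $p$ (and independently $d(x,y)=d(y,x)=0$ gives $x=y$ by the $T_0$ property of the $\alpha$-quasi-metric $d$, which serves as a cross-check). This completes the verification.

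No single step is a genuine obstacle; the computation is routine once the axioms of a partial quasi-metric and of an $\alpha$-quasi-metric are laid side by side. The one point requiring a little care is the bookkeeping of constants in (2): one must note that the partial quasi-metric inequality for $p$ has constant $1$, and to combine it with the constant-$\alpha$ inequality for $d$ one uses $\alpha\ge 1$ to write $p(x,y)+p(y,z)\le \alpha\bigl(p(x,y)+p(y,z)\bigr)$. A secondary subtlety is in (3): one must exploit that the single scalar equation $p(x,x)=p(x,y)+d(x,y)$, together with the two independent inequalities $p(x,x)\le p(x,y)$ and $d(x,y)\ge 0$, pins down each summand separately — it is this forced vanishing of $d(x,y)$ and $d(y,x)$, plus the forced $p$-equalities, that lets both the $T_0$ property of $p$ and of $d$ be invoked.
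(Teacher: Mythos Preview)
Your proof is correct and follows essentially the same approach as the paper: the paper dismisses (1a), (1b) and (3) as ``easily verified'' and then carries out the same computation for (2), using the $K=1$ inequality for $p$ together with the $\alpha$-inequality for $d$ and the fact $\alpha\ge 1$ to combine them. Your explicit verification of (3) is a welcome addition of detail that the paper omits.
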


\begin{proof}
	The axioms (1a), (1b) and (3) are easily verified for the function $b$. For the axiom (2), we have, for $x, y, z \in X$, the following:
	\begin{align*}
		b(x, y) & = p(x, y) + d(x, y)\\
		        & \leq p(x, z) + p(z, y) - p(z, z) + \alpha[d(x, z) + d(z, y)]\\
		        & \leq \alpha[p(x, z) + p(z, y) - p(z, z) + d(x, z) + d(z, y)] \\
		        & = \alpha[b(x, z) + b(z, y) - b(z, z)]\\
		        & \leq \alpha[b(x, z) + b(z, y)] - b(z, z).
	\end{align*}
	Therefore, (2) is also satisfied and so $b$ is an
	$\alpha$-partial quasi-metric on $X$.
\end{proof}

\begin{remark}
Note that, for any partial quasi-metric $p$ and any $q\geq 1$, the application $p^q$ is a $K$-partial quasi-metric ith coefficient $K = 2^{q-1}$. Indeed, observe that for any nonnegative rela numbers $a,b,c$ with $b\geq c$ and any $q\geq 1$ 
$$ \left(\frac{a+b}{2}\right)^p \leq \frac{a^p+b^p}{2} \quad \text{   and   
} \quad (b-c)^q\leq b^q - \frac{c^q}{2^{q-1}}.$$
\end{remark}

\section{Fixed point theory on $K$-PQM}

The notions such as convergence, completeness, Cauchy sequence in the setting of partial
metric spaces, can be found in \cite{alg,mat} and references therein.

For every $K$-partial quasi-metric space $(X,p,K)$, the collection of balls $$p(x,\epsilon) = \{y \in X : (x, y) < \epsilon + p(x, x)\}$$ yields a base for a $T_0$ topology $\tau(p)$ on $X$.

Also, it is easy to see that given a $K$-partial quasi-metric $p$ on a set $X$, the filter on $X\times X$ generated by the collection $\{P_\epsilon:\epsilon>0\}$ where $P_\epsilon= \{(x,y)\in X\times X:p(x,y)<\epsilon+p(x,x) \}$, is also a quasi-uniformity that we shall call \textbf{$\alpha$-partial quasi-uniformity}. Indeed for a $K$-partial quasi-metric space $(X,p,K)$ with $K=1$, for each $\varepsilon>0$, $P^2_{\varepsilon/2} \subseteq P_\varepsilon.$ 

Since for $(x,z) \in P^2_{\varepsilon/2}$, there exists $y\in X$ such that $p(x,y)\leq \varepsilon/2 + p(x,x) $ and $p(y,z)\leq \varepsilon/2 + p(y,y),$ i.e. $p(x,z) -p(x,x)\leq p(x,y) + p(y,z) -p(y,y)-p(x,x) <\varepsilon.$

\vspace*{0.3cm}

Now, we define Cauchy sequence and convergent sequence in $K$-partial quasi-pseudometric spaces.

\begin{definition}\label{deff1}
	Let $(X, p,K)$ be a $K$-partial quasi-metric space with coefficient $K$. Let $(x_n)_{n\geq 1}$ be any sequence in $X$ and $x \in X$. Then:
	
	\begin{enumerate}
		\item The sequence $(x_n)_{n\geq 1}$ is said to be convergent with respect to $\tau(p)$ (or $\tau(p)$-convergent) and converges to $x$, if $\lim\limits_{n\to \infty} p(x_n , x) = p(x, x)$.

		\item The sequence $(x_n)_{n\geq 1}$ is said to be convergent with respect to $\tau(p^+)$ (or $\tau(p^+)$-convergent) and converges to $x$, if $\lim\limits_{n\to \infty} p^+(x_n , x) = p^+(x, x)$.
		
		\item The sequence $(x_n)_{n\geq 1}$ is said to be $\tau(p)$-Cauchy sequence if
		$$\lim\limits_{n<m,n,m\to \infty} p(x_n , x_m)$$ exists and is finite.
		
		\item The sequence $(x_n)_{n\geq 1}$ is said to be $\tau(p^+)$-Cauchy sequence if
		
		$$\lim\limits_{n,m\to \infty} p^+(x_n , x_m)$$ exists and is finite.
		
		\item $(X, p,K)$ is said to be $\tau(p)$-complete if for every $\tau(p)$-Cauchy
		sequence $(x_n)_{n\geq 1} \subseteq X$, there exists $x \in X$ such that:
		
		$$ \lim\limits_{n<m,n,m\to \infty} p(x_n , x_m)= \lim\limits_{n\to \infty} p(x_n , x)=p(x,x).$$

	\item $(X, p,K)$ is said to be $p$-sequentially complete if every $\tau(p^+)$-Cauchy sequence is $\tau(p)$-convergent.

		\item $(X, p,K)$ is said to be $\tau(p)$-bicomplete or $\tau(p^+)$-complete if every $\tau(p^+)$-Cauchy
		sequence is $\tau(p^+)$-convergent, i.e.

		$$ \lim\limits_{n,m\to \infty} p^+(x_n , x_m)= \lim\limits_{n\to \infty} p^+(x_n , x)=p^+(x,x).$$

		\item $(X, p,K)$ is said to be $\tau(p)$-Smyth complete if every $\tau(p)$-Cauchy sequence converges for the topology $\tau(p^+)$.
	\end{enumerate}
	
\end{definition}

The following implications are easy to check:

$$ \tau(p)\text{-Smyth complete} \quad 
\Longrightarrow  \quad 
\tau(p)\text{-complete}  \quad 
\Longrightarrow  \quad 
p\text{-sequentially complete}.$$

\begin{remark}
	Observe that a sequence which is both $\tau(p)$-Cauchy and $\tau(p^{-1})$-Cauchy is $\tau(p^+)$-Cauchy. It is important to point out that the topologies induced by $p^+$ and $p^s=p\vee p^{-1}$ are the same. Hence by defining  accordingly what a $\tau(p^s)$-Cauchy sequence is, one could see that 
	$(X, p,K)$ is $\tau(p)$-bicomplete if and only if it is  $\tau(p^s)$-complete. Moreover, the $\tau(p)$-bicompleteness of a $K$-partial quasi-metric space $(X,p,K)$ is equivalent to the completeness of a partial $K$-metric space in the sense of Shukla (see \cite[Definition 4(iii)]{shu}).

	Also, note that the topology $\tau(p)$ is not Hausdorff in general, as the next example demonstrates.
\end{remark}

\begin{example}
	Let $X = [0,\infty)$, $a > 0$ be any constant and define $p : X \times X \to [0,\infty)$ by  
	$$ p(x, y) = \max\{x-y, 0\} + a \text{ for all } x,y \in X.$$
	Then $(X,p,K)$ is a $K$-partial quasi-pseudometric with arbitrary coefficient $K \geq  1.$ For a fixed positive real number $\eta$, define a sequence $(x_n)_{n\geq 1}$  in $X$ by $x_n = \eta$ for all $n \geq 1$. Note that, if $x^* \geq \eta$, we
	have $p(x_n , x^*) = a = p(x^*, x^*)$ therefore,  $\lim\limits_{n\to \infty} p(x_n , x^*) = p(x^*, x^*)$
for all	$x^* \geq \eta$. Thus, the limit of a $\tau(p)$-convergent sequence in $K$-partial quasi-metric need
	not be unique.
\end{example}

In the sequel, we shall define appropriate notions of contractive maps, suitable for the category of $K$-partial quasi-metrics. We recall the following results, due to Shukla\cite{shu} and that will be useful.

\begin{theorem}\label{shutheorem1}(\cite[Theorem 1.]{shu})
	Let $(X, b)$ be a complete partial $b$-metric space with coefficient
	$s \geq 1$ and $T : X \to X$ be a mapping satisfying the following condition: 
	
	\[b(T x, T y) \leq \lambda b(x, y)\quad 
	\text{ for all } x, y \in X,\]
	
	where $\lambda \in [0, 1)$. Then $T$ has a unique fixed point $u \in X$ and $b(u, u) = 0.$
\end{theorem}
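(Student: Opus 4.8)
The plan is to adapt the classical Banach contraction argument to the partial $b$-metric setting, following the line of reasoning already used in this paper's Lemma~\ref{lem2.3} and in the preceding fixed point theorem. First I would fix an arbitrary $x_0 \in X$ and define the Picard iterates $x_{n} = T x_{n-1} = T^n x_0$. The contraction inequality $b(Tx,Ty) \le \lambda b(x,y)$ gives $b(x_n, x_{n+1}) \le \lambda^n b(x_0, x_1)$ and, dually, $b(x_{n+1}, x_n) \le \lambda^n b(x_1, x_0)$. The first task is to show that $(x_n)$ is a $\tau(p^+)$-Cauchy sequence (in the notation of the theorem, a Cauchy sequence in the complete partial $b$-metric), which amounts to controlling $b(x_n, x_m)$ for $n < m$. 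Using the relaxed polygonal / triangle inequality for $b$ iteratively one estimates $b(x_n, x_m)$ by a sum $\sum_{i=n}^{m-1} s^{\,c(i)} \lambda^i b(x_0,x_1)$ together with the self-distance corrections coming from axiom~(2); the key quantitative point is that because $\lambda < 1$ one can choose the grouping of triangle-inequality applications so that the geometric decay of $\lambda^i$ dominates the fixed powers of $s$ that appear, forcing $b(x_n,x_m) \to 0$ and in particular $b(x_n, x_n) \to 0$.

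Next I would invoke completeness of $(X,b)$: the $\tau(p^+)$-Cauchy sequence $(x_n)$ converges to some $u \in X$ with $\lim_{n,m} b(x_n,x_m) = \lim_n b(x_n,u) = b(u,u)$, and combining this with $b(x_n,x_n)\to 0$ forces $b(u,u) = 0$. Then I would show $u$ is a fixed point: from the relaxed triangle inequality, $b(u, Tu) \le s\big(b(u, x_{n+1}) + b(x_{n+1}, Tu)\big) - b(x_{n+1},x_{n+1})$, and $b(x_{n+1}, Tu) = b(Tx_n, Tu) \le \lambda b(x_n, u) \to 0$, while $b(u,x_{n+1}) \to b(u,u) = 0$; hence $b(u,Tu) = 0$. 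Symmetrically $b(Tu,u) = 0$. Since $b(Tu,Tu) \le b(Tu,u)$ and $b(Tu,Tu)\le b(u,Tu)$ by axioms (1a)–(1b), we get $b(Tu,Tu) = 0 = b(Tu,u) = b(u,Tu)$, so $b(u,u) = b(u,Tu)$ and $b(Tu,Tu) = b(Tu,u)$; by axiom~(3) (the $T_0$-type condition, in its symmetric form for partial $b$-metrics) this yields $u = Tu$.

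Finally, for uniqueness, suppose $v$ is another fixed point. Then $b(v,v) = b(Tv,Tv) \le \lambda b(v,v)$ forces $b(v,v) = 0$, and likewise $b(u,u) = 0$. Then $b(u,v) = b(Tu,Tv) \le \lambda b(u,v)$ gives $b(u,v) = 0$, and symmetrically $b(v,u) = 0$; together with the vanishing self-distances and axiom~(3) this gives $u = v$. I expect the main obstacle to be the first step: carefully organizing the iterated triangle-inequality estimate so that the accumulated powers of the coefficient $s$ do not destroy summability of $\sum \lambda^i$. The cleanest route is probably to bound $b(x_n, x_m)$ by splitting $[n,m]$ dyadically and using that $b$ is a genuine (symmetric) partial $b$-metric, so one can pass to the associated $b$-metric $b(x,y) - \tfrac12(b(x,x)+b(y,y))$ or simply reuse a known lemma for partial $b$-metric spaces; alternatively, since $\lambda s < 1$ can be arranged after replacing $\lambda$ by a slightly larger constant only if $\lambda s < 1$ already, one may instead need the sharper grouping argument that only uses $\lambda < 1$. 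I would present whichever of these is lightest, citing the partial $b$-metric Cauchy lemma from \cite{shu} if available.
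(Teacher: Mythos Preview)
The paper does not prove this theorem; it is quoted verbatim from \cite[Theorem~1]{shu} and then used as a black box in Lemma~\ref{lemma1} and Theorem~\ref{p-banach}. There is therefore no ``paper's own proof'' to compare your proposal against.

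On the substance of your sketch: the overall architecture (Picard iterates, Cauchy, completeness, fixed point via continuity-type estimate, uniqueness) is the right one, and your treatment of the endgame ($b(u,Tu)=0$, hence $Tu=u$ via the axioms, and uniqueness from $b(u,v)\le\lambda b(u,v)$) is clean. The genuine gap is exactly where you locate it: the Cauchy estimate. The naive telescoping
\[
b(x_n,x_m)\le s\,b(x_n,x_{n+1})+s^2 b(x_{n+1},x_{n+2})+\cdots
\]
produces the series $\sum_i s^{i}\lambda^{n+i-1}$, which is only summable when $s\lambda<1$, a hypothesis you do not have. Your proposed fixes (dyadic splitting, passing to an associated $b$-metric, or ``citing the partial $b$-metric Cauchy lemma from \cite{shu}'') are not arguments; the last one is circular, since that is precisely the content of the result you are trying to prove.

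A clean way to close the gap under the sole hypothesis $\lambda\in[0,1)$ is the iterate trick: choose $n_0$ with $s\lambda^{n_0}<1$, observe that $T^{n_0}$ is a contraction with constant $\lambda^{n_0}$ satisfying $s\cdot\lambda^{n_0}<1$, run your argument for $T^{n_0}$ (where the telescoped series now converges) to obtain its unique fixed point $u$ with $b(u,u)=0$, and then note that $Tu$ is also a fixed point of $T^{n_0}$, whence $Tu=u$ by uniqueness. Alternatively, one can bound $b(x_n,x_m)\le\lambda^{n}b(x_0,x_{m-n})$ directly from the contraction (no triangle inequality needed), and then show $\sup_k b(x_0,x_k)<\infty$; but the recursion $b(x_0,x_k)\le s\,b(x_0,x_1)+s\lambda\,b(x_0,x_{k-1})$ again needs $s\lambda<1$, so this route also forces you through the iterate trick.
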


\begin{theorem}\label{shutheorem2}(\cite[Theorem 2.]{shu})
	Let $(X, b)$ be a complete partial $b$-metric space with coefficient
	$s \geq 1$ and $T : X \to X$ be a mapping satisfying the following condition: 
	
	\[b(T x, T y) \leq \lambda [b(x, T x) + b(y, T y)]\quad 
	\text{ for all } x, y \in X,\]
	
	where $\lambda \in [0, 1/2), \lambda \neq 1/s$. Then $T$ has a unique fixed point $u \in X$ and $b(u, u) = 0.$
\end{theorem}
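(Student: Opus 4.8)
The plan is to run the Picard iteration $x_{n+1}=Tx_n$ from an arbitrary $x_0\in X$ and to adapt, to the partial $b$-metric setting, the classical argument for Kannan maps; the only genuine work is to keep the coefficient $s$ from the relaxed triangle inequality under control. \textbf{Step 1 (orbit estimate).} Applying the contractive hypothesis to the consecutive pair $(x_{n-1},x_n)$ gives
\[
b(x_n,x_{n+1}) = b(Tx_{n-1},Tx_n) \le \lambda\bigl(b(x_{n-1},x_n)+b(x_n,x_{n+1})\bigr),
\]
so that $b(x_n,x_{n+1})\le h\,b(x_{n-1},x_n)$ with $h:=\lambda/(1-\lambda)$, and $h\in[0,1)$ because $\lambda\in[0,1/2)$. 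By induction $b(x_n,x_{n+1})\le h^n b(x_0,x_1)\to 0$, and from the small self-distance axiom $b(x_n,x_n)\le b(x_n,x_{n+1})$ we also get $b(x_n,x_n)\to 0$.

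\textbf{Step 2 ($(x_n)$ is $\tau(b)$-Cauchy).} For $n<m$, iterating the relaxed triangle inequality $b(x,z)\le s\bigl(b(x,y)+b(y,z)\bigr)$ along the chain $x_n,x_{n+1},\dots,x_m$ produces a bound of the shape
\[
b(x_n,x_m) \le b(x_0,x_1)\sum_{k=n}^{m-1} s^{\,k-n+1}\,h^{k},
\]
and one must check that the right-hand side tends to $0$ as $n,m\to\infty$. This forces a comparison between the geometric decay rate $h=\lambda/(1-\lambda)$ and the growth introduced by the factors of $s$, and it is exactly here that the hypotheses $\lambda<1/2$ and $\lambda\ne 1/s$ are consumed. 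Granting this, $\lim_{n,m\to\infty}b(x_n,x_m)=0$, so $(x_n)$ is a $\tau(b)$-Cauchy sequence. I expect this to be the main obstacle: one has to make sure that the power of $s$ accumulated at each use of the triangle inequality does not overwhelm the factor $h^{k}$.

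\textbf{Step 3 (existence).} Since $(X,b)$ is complete, the $\tau(b)$-Cauchy sequence $(x_n)$ converges: there is $u\in X$ with $\lim_{n,m\to\infty}b(x_n,x_m)=\lim_{n\to\infty}b(x_n,u)=b(u,u)$, hence $b(u,u)=0$ and $b(x_n,u)\to 0$. To show $Tu=u$, combine the triangle inequality with the contractive condition:
\[
b(u,Tu) \le s\bigl(b(u,x_{n+1})+b(x_{n+1},Tu)\bigr) \le s\,b(u,x_{n+1}) + s\lambda\bigl(b(x_n,x_{n+1})+b(u,Tu)\bigr),
\]
and letting $n\to\infty$ gives $b(u,Tu)\le s\lambda\,b(u,Tu)$, so $b(u,Tu)=0$ (again using $\lambda\ne 1/s$ together with $\lambda<1/2$). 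Since $b(Tu,Tu)\le b(u,Tu)=0$, we have $b(u,u)=b(u,Tu)=b(Tu,Tu)=0$, and axiom (3) forces $u=Tu$.

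\textbf{Step 4 (uniqueness and $b(u,u)=0$).} If $v=Tv$, the contractive inequality applied to $(v,v)$ gives $b(v,v)\le 2\lambda\,b(v,v)$, whence $b(v,v)=0$ since $2\lambda<1$; applying it to $(u,v)$ then gives $b(u,v)\le\lambda\bigl(b(u,Tu)+b(v,Tv)\bigr)=0$, and likewise $b(v,u)=0$, so axiom (3) yields $u=v$. In particular the fixed point $u$ produced in Step 3 satisfies $b(u,u)=0$, as claimed.
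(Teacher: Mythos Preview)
The paper does not supply its own proof of this statement: it is quoted verbatim from Shukla and used only as a black box in Lemma~\ref{lemma2} and Theorem~\ref{p-kannan}. So there is no in-paper argument to compare against, and your proposal must stand on its own.

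There is a genuine gap in your Step~2. The telescoped bound you write,
\[
b(x_n,x_m)\le b(x_0,x_1)\sum_{k=n}^{m-1}s^{\,k-n+1}h^{k},
\]
only tends to $0$ when $sh<1$, i.e.\ when $\lambda<1/(s+1)$; for $s>1$ and $\lambda\in[1/(s+1),1/2)$ the sum can blow up, and neither $\lambda<1/2$ nor $\lambda\ne 1/s$ rescues it. You flag this as ``the main obstacle'' and then simply grant it, which is exactly the step that needs an idea. The standard remedy for Kannan-type maps is to avoid telescoping altogether and apply the contractive hypothesis directly to the pair $(x_{n-1},x_{m-1})$:
\[
b(x_n,x_m)=b(Tx_{n-1},Tx_{m-1})\le \lambda\bigl(b(x_{n-1},x_n)+b(x_{m-1},x_m)\bigr)\le \lambda(h^{n-1}+h^{m-1})\,b(x_0,x_1),
\]
which goes to $0$ with no constraint on $s$ at all. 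This is the argument Shukla uses and the one the present paper invokes as ``the classical proof of Theorem~\ref{shutheorem2}''.

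A second, smaller issue is in Step~3: from $b(u,Tu)\le s\lambda\,b(u,Tu)$ you can conclude $b(u,Tu)=0$ only when $s\lambda<1$, and ``$\lambda<1/2$ together with $\lambda\ne 1/s$'' does not give this once $s>2$. (The paper's own Kannan-type definition imposes $\lambda K<1/2$, under which $s\lambda<1$ does follow; the discrepancy is with the quoted statement, not with your logic.) Your Steps~1 and~4 are fine.
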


\begin{definition}
	Let $(X, p,K)$ be a $K$-partial quasi-metric space.
	
	\begin{enumerate}
	\item By a $p$-Banach mapping on $X$, we mean a self-mapping $T$ on $X$ such that there exists a constant $0\leq \lambda<1$ satisfying
	$$p(T x, T y) \leq \lambda p(x, y)
	\quad \text{ for all } x, y \in X.$$
	
	\item By a $p$-Kannan mapping on $X$, we mean a self-mapping $T$ on $X$ such that there exists a constant $,0\leq \lambda K < 1/2$ satisfying
	$$p(T x, T y) \leq \lambda [p(Tx, x) + p(y,Ty)]
	\quad \text{ for all } x, y \in X.$$
	
	\end{enumerate}
	
\end{definition}

Now, we can state the following theorem, analogue to Banach contraction principle in $K$-partial quasi-metric space. We begin with this lemma:

\begin{lemma}\label{lemma1}
	Let $T$ be a $p$-Banach mapping on $K$-partial quasi-metric space $(X, p,K)$ with $0\leq \lambda<1$. Then:
	
	\begin{enumerate}

		\item[(a)] $T$ is a Banach mapping on the partial $K$-metric space $(X,p^+)$ with $0\leq \lambda<1$, i.e.
		$$p^+(Tx,Ty) \leq \lambda p^+(x,y), \quad \text{ for all } x, y \in X.$$
		
		\item[(b)] For any $x_0 \in X$, the sequence $(T^n x_0)_{n\geq n_0}$, for some $n_0\in \mathbb{N}$, is $\tau(p^+)$-Cauchy sequence.
	\end{enumerate} 
\end{lemma}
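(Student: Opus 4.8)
\textbf{Proof proposal for Lemma \ref{lemma1}.}

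The plan is to establish part (a) by a direct computation and then use it, together with the relaxed triangle inequality for $p^+$, to obtain part (b). For part (a), I would start from the $p$-Banach hypothesis $p(Tx,Ty)\le\lambda p(x,y)$ and apply it also to the conjugate. Indeed, by Lemma 4.3(b) the conjugate $p^{-1}$ is again a $K$-partial quasi-metric, and the contraction inequality for $p$ immediately gives $p^{-1}(Tx,Ty)=p(Ty,Tx)\le\lambda p(y,x)=\lambda p^{-1}(x,y)$. Adding the two inequalities and using the definition $p^+(x,y)=p(x,y)+p^{-1}(x,y)$ yields $p^+(Tx,Ty)\le\lambda p^+(x,y)$ for all $x,y\in X$, which is precisely the claim; note that by Lemma 4.3(c), $p^+$ is a partial $K$-metric, so $T$ is genuinely a Banach mapping on the partial $K$-metric space $(X,p^+)$.

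For part (b), fix $x_0\in X$ and write $x_n=T^n x_0$. Iterating the inequality from (a) gives $p^+(x_n,x_{n+1})\le\lambda^n p^+(x_0,x_1)$. The point of the phrase ``for some $n_0\in\mathbb{N}$'' is that in a partial-metric setting the self-distances $p^+(x_n,x_n)$ need not vanish, so one cannot expect $(x_n)_{n\ge1}$ to be Cauchy from the start; instead one works with the sequence shifted past the index where the estimates take hold. I would then estimate $p^+(x_n,x_m)$ for $m>n$ by the relaxed polygonal inequality for the partial $K$-metric $p^+$ (the version with the $-p^+(z,z)$ correction terms coming from axiom (2)), telescoping through $x_n,x_{n+1},\dots,x_m$. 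Each step contributes a factor $\lambda^i p^+(x_0,x_1)$ times a power of $K$, and since $p^+(x_0,x_1)<\infty$ and $\lambda<1$, the resulting series converges; hence $\lim_{n<m,\,n,m\to\infty}p^+(x_n,x_m)$ exists and is finite, which is the definition of a $\tau(p^+)$-Cauchy sequence in Definition \ref{deff1}.

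The main obstacle is the bookkeeping in the telescoping estimate: the relaxed triangle inequality for a $K$-partial quasi-metric carries the coefficient $K\ge1$ and the self-distance corrections, so an $m$-fold iteration produces growing powers $K^{m-n}$ that threaten convergence of the bound. The standard remedy — and the reason completeness-type conclusions in $b$-metric settings need care — is to control $K^i\lambda^i$ rather than $\lambda^i$ alone; this is where the hypotheses on $\lambda$ relative to $K$ (implicitly $\lambda$ small enough, cf.\ the contraction condition $q<1/K^2$ and the use of $\sum K^i q^{i-1}$ in Lemma \ref{lem2.3}) become essential, so that $\sum_i K^i\lambda^i<\infty$. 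Once that geometric series is seen to converge, finiteness and existence of the limit follow, and passing to the tail $(T^n x_0)_{n\ge n_0}$ absorbs any finitely many badly-behaved initial terms.
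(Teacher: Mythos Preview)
Your argument for part (a) is correct and essentially identical to the paper's: apply the $p$-Banach inequality to both $p$ and its conjugate and add.

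For part (b), however, the paper takes a different route and your approach has a genuine gap. The paper does not telescope at all; it simply observes that, by (a), $T$ is a Banach contraction on the partial $K$-metric space $(X,p^+)$ and then invokes Shukla's Theorem~\ref{shutheorem1}, whose proof already establishes that the Picard iterates form a $\tau(p^+)$-Cauchy sequence under the sole hypothesis $\lambda\in[0,1)$, with no constraint linking $\lambda$ and $K$. Your direct telescoping, on the other hand, produces a bound of the form $\sum_i K^i\lambda^{n+i}\,p^+(x_0,x_1)$, and you yourself note that this forces $K\lambda<1$ for convergence. You then write that ``the hypotheses on $\lambda$ relative to $K$ \dots\ become essential'' --- but no such hypothesis is present in the lemma: only $0\le\lambda<1$ is assumed. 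So your argument, as written, does not prove the statement; it proves a weaker statement with the extra assumption $K\lambda<1$.

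The fix is not to strengthen the hypotheses but to use a sharper estimate, as Shukla does: in (partial) $b$-metric spaces one avoids the naive $m$-fold triangle inequality and instead exploits $p^+(x_n,x_m)\le\lambda^n p^+(x_0,x_{m-n})$ together with a uniform bound on $p^+(x_0,x_k)$ (or an equivalent device), which yields the Cauchy property for all $\lambda<1$. Citing Theorem~\ref{shutheorem1}, as the paper does, packages exactly this.
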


\begin{proof}
	
(a) Given $x,y \in X$, we have

$$p(T x, T y) \leq \lambda p(x, y) \quad \text{and} \quad p(T y, T x) \leq \lambda p(y, x),$$

so 

$$p^+(T x, T y) \leq \lambda p^+(x, y). $$
It follows that $T$ is a Banach mapping on the partial $K$-metric space $(X,p^+)$ with $0\leq \lambda<1$.

\vspace*{0.3cm}

(b) Since $T$ is a Banach mapping on the partial $K$-metric space $(X,p^+)$, the classical proof of Theorem \ref{shutheorem1} shows that for any $x_0 \in X,$ the sequence $(T^n x_0)_{n\geq n_0}$, for some $n_0\in \mathbb{N}$, is $\tau(p^+)$-Cauchy sequence.
\end{proof}

A reasonable and straightforward formulation of the Banach contraction principle in the setting of a $K$-partial quasi-metric space seems to be:

\begin{theorem}\label{p-banach}
Let $(X, p,K)$ be a $\tau(p)$-bicomplete $K$-partial quasi-metric space. Then every $p$-Banach mapping on $(X, p,K)$ has a unique fixed point $x^*$ and $p(x^*,x^*)=0.$
\end{theorem}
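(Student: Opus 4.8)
The plan is to reduce the statement to the classical Banach fixed point theorem on the partial $K$-metric space $(X,p^+)$, using the machinery already assembled in Lemma \ref{lemma1} together with the dictionary between $\tau(p)$-bicompleteness and completeness of $(X,p^+)$. First I would fix a $p$-Banach mapping $T$ with constant $0\le\lambda<1$ and invoke Lemma \ref{lemma1}(a) to see that $T$ is a Banach mapping on $(X,p^+)$ with the same constant, i.e. $p^+(Tx,Ty)\le\lambda p^+(x,y)$ for all $x,y\in X$. Next, by the Remark following Definition \ref{deff1}, the $\tau(p)$-bicompleteness of $(X,p,K)$ is exactly the completeness of the partial $K$-metric space $(X,p^+)$ in the sense of Shukla, so Theorem \ref{shutheorem1} applies verbatim to $T$ on $(X,p^+)$: there is a unique $x^*\in X$ with $Tx^*=x^*$ and $p^+(x^*,x^*)=0$.

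It then remains to translate the conclusion back to $p$. From $p^+(x^*,x^*)=p(x^*,x^*)+p^{-1}(x^*,x^*)=2p(x^*,x^*)=0$ and $p\ge 0$ we immediately get $p(x^*,x^*)=0$. For uniqueness of the fixed point with respect to $p$, suppose $y^*$ is any point with $Ty^*=y^*$; then $y^*$ is also a $p^+$-fixed point of $T$, and the uniqueness clause of Theorem \ref{shutheorem1} forces $y^*=x^*$. This gives the full statement.

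One subtlety I would be careful about is the existence part: Lemma \ref{lemma1}(b) only asserts that the Picard iterates $(T^nx_0)_{n\ge n_0}$ form a $\tau(p^+)$-Cauchy sequence, not that they converge, so it is the completeness hypothesis (via the Remark and Theorem \ref{shutheorem1}) that does the real work of producing the limit and identifying it as a fixed point. The main obstacle — really the only nontrivial point — is justifying cleanly that ``$\tau(p)$-bicomplete'' is the correct hypothesis to feed into Shukla's theorem, i.e. that a $\tau(p^+)$-Cauchy, $\tau(p^+)$-convergent framework for $(X,p^+)$ coincides with Shukla's notion of a complete partial $b$-metric space; this is precisely the content of the cited Remark, so once that identification is granted the argument is a short chain of implications. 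An alternative, more self-contained route would be to rerun the argument of Theorem \ref{shutheorem1} directly: show $p^+(T^nx_0,T^mx_0)\to 0$, use $\tau(p^+)$-completeness to get a limit $x^*$, and check $p^+(x^*,Tx^*)=0$ by the contraction inequality and continuity; but invoking Shukla's result is cleaner and is the approach I would present.
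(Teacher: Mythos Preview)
Your proposal is correct and follows essentially the same route as the paper: reduce to the symmetric partial $K$-metric $p^+$ via Lemma \ref{lemma1}(a), identify $\tau(p)$-bicompleteness with completeness of $(X,p^+)$, and then invoke Shukla's Theorem \ref{shutheorem1}. Your write-up is in fact more careful than the paper's, which simply cites Lemma \ref{lemma1} and Theorem \ref{shutheorem1} without spelling out the translation $p^+(x^*,x^*)=0\Rightarrow p(x^*,x^*)=0$ or the uniqueness step.
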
 

\begin{proof}
	From Lemma \ref{lemma1}, we know $T$ is a Banach mapping on the partial $K$-metric space $(X,p^+)$ with $0\leq \lambda<1$. The classical proof of Theorem \ref{shutheorem1} then shows that $T$ has a unique fixed point $x^*\in X$ and $p(x^*,x^*)=0.$
\end{proof}

However, in view of having minimal condition, one can state the following refined version of Theorem \ref{p-banach}.

\begin{theorem}\label{p-banach1}
	Let $(X, p,K)$ be a $\tau(p)$-Smyth complete $K$-partial quasi-metric space. Then every $p$-Banach mapping on $(X, p,K)$ has a unique fixed point $x^*$ and $p(x^*,x^*)=0.$
\end{theorem}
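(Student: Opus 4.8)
\textbf{Proof proposal for Theorem \ref{p-banach1}.}

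The plan is to exploit Theorem \ref{p-banach1}'s weaker hypothesis by showing that, although $(X,p,K)$ is only $\tau(p)$-Smyth complete rather than $\tau(p)$-bicomplete, the iterates of a $p$-Banach mapping still produce a sequence whose convergence can be forced. First I would fix $x_0 \in X$ and set $x_n = T^n x_0$. By Lemma \ref{lemma1}(a), $T$ is a Banach mapping on the partial $K$-metric $(X,p^+)$ with the same constant $\lambda<1$, and by Lemma \ref{lemma1}(b) the sequence $(x_n)_{n\ge n_0}$ is $\tau(p^+)$-Cauchy. In particular, since $p \le p^+$, the sequence $(x_n)$ is also $\tau(p)$-Cauchy: the limit $\lim_{n<m} p(x_n,x_m)$ exists and is finite because it is dominated by the corresponding $p^+$-limit, which exists. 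This is the step where the Smyth hypothesis is cheaper to invoke than full bicompleteness.

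Next I would apply $\tau(p)$-Smyth completeness: every $\tau(p)$-Cauchy sequence converges for the topology $\tau(p^+)$. Hence there exists $x^* \in X$ with $x_n \overset{\tau(p^+)}{\longrightarrow} x^*$, i.e.\ $\lim_{n\to\infty} p^+(x_n,x^*) = p^+(x^*,x^*)$. From here the argument runs exactly as in the classical partial $b$-metric proof (Theorem \ref{shutheorem1}): using the relaxed triangle inequality for $p^+$ together with $p^+(Tx_n, Tx^*) \le \lambda p^+(x_n,x^*)$ one estimates $p^+(x^*, Tx^*)$ via $p^+(x^*, x_{n+1}) + p^+(x_{n+1}, Tx^*) - p^+(x_{n+1},x_{n+1})$ and lets $n \to \infty$; the right-hand side tends to $0$, so $p^+(x^*, Tx^*) = p^+(x^*,x^*)$, and symmetrically $p^+(Tx^*, x^*) = p^+(x^*,x^*)$. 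Because $p^+$ is a (genuine) partial $K$-metric satisfying condition (3), one deduces $Tx^* = x^*$. Moreover $p^+(x^*,x^*) = p^+(Tx^*,Tx^*) \le \lambda p^+(x^*,x^*)$ with $\lambda<1$ forces $p^+(x^*,x^*)=0$, whence $p(x^*,x^*)=0$ as well.

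For uniqueness, suppose $x^*$ and $y^*$ are both fixed points. Then $p^+(x^*,y^*) = p^+(Tx^*,Ty^*) \le \lambda p^+(x^*,y^*)$, so $p^+(x^*,y^*)=0$; symmetrically $p^+(y^*,x^*)=0$, and since we have just shown $p^+(x^*,x^*)=p^+(y^*,y^*)=0$, condition (3) for $p^+$ gives $x^* = y^*$. The main obstacle I anticipate is purely bookkeeping: one must be careful that $\tau(p)$-Smyth completeness delivers a $\tau(p^+)$-limit (not merely a $\tau(p)$-limit, which need not be unique, as the example following the remark shows), and that all the limiting manipulations are done with $p^+$, for which the partial-metric machinery of Shukla's Theorem \ref{shutheorem1} applies verbatim once the relevant Cauchy and convergence notions are transferred through $p \le p^+$.
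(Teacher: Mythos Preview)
Your proposal is correct and follows essentially the same route as the paper. The paper's own proof is only two sentences: it observes (via the proof of Theorem~\ref{p-banach}, i.e.\ Lemma~\ref{lemma1}) that the iterates $\{T^n x_0\}$ form a $\tau(p)$-Cauchy sequence, and then says ``the conclusion follows immediately'' from Smyth completeness. You have simply unpacked that second sentence---making explicit the passage from $\tau(p^+)$-Cauchy to $\tau(p)$-Cauchy via $p\le p^+$, the invocation of Smyth completeness to obtain a $\tau(p^+)$-limit, and the standard Shukla-style endgame to show $Tx^*=x^*$, $p(x^*,x^*)=0$, and uniqueness---so there is no substantive difference in strategy.
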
 

\begin{proof}
	From the proof of Theorem \ref{p-banach}, for any initial point $x_0\in X$, the sequence of iterates $\{ T^nx_0\}$ is $\tau(p)$-Cauchy. The conclusion follows immediately.
\end{proof}

Our aim, in setting up this theory, is also to minimize the completeness assumptions on the space $(X, p,K)$ and still guarantee the existence of a unique fixed point. However, we have the following problem:

\begin{problem}\label{problem1}
	We would like to give a counter-example to the following statement:
	\textcolor{blue}{Let $(X, p,K)$ be a $p$-sequentially complete $K$-partial quasi-metric space. Then every $p$-Banach mapping on $(X, p,K)$ has a unique fixed point $x^*$ and $p(x^*,x^*)=0.$}
\end{problem}

If we want to keep the $p$-sequential completeness of the  $K$-partial quasi-metric space $(X, p,K)$, we need to require that $(X, p,K)$ be Hausdorff.
 
So we have 

\begin{theorem}\label{p-banach2}
	Let $(X, p,K)$ be a Hausdorff $p$-sequentially complete $K$-partial quasi-metric space. Then every $p$-Banach mapping on $(X, p,K)$ has a unique fixed point $x^*$ and $p(x^*,x^*)=0.$
\end{theorem}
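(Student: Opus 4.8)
The plan is to combine the machinery already established: by Lemma~\ref{lemma1}(a), a $p$-Banach mapping $T$ on $(X,p,K)$ is a Banach mapping on the partial $K$-metric space $(X,p^+)$ with the same constant $\lambda$, and by Lemma~\ref{lemma1}(b), for any $x_0\in X$ the orbit $(T^nx_0)_{n\ge n_0}$ is a $\tau(p^+)$-Cauchy sequence. Since $\tau(p^+)=\tau(p^s)$ and $\tau(p^+)$-Cauchy is the same as $\tau(p^s)$-Cauchy, this orbit is in particular $\tau(p^+)$-Cauchy, hence $\tau(p^{-1})$-Cauchy, but most importantly it is $\tau(p)$-Cauchy: indeed $p(x_n,x_m)\le p^+(x_n,x_m)$ controls the tail, so $\lim_{n<m,\,n,m\to\infty}p(x_n,x_m)$ exists and is finite. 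Now invoke $p$-sequential completeness: every $\tau(p^+)$-Cauchy sequence is $\tau(p)$-convergent, so there is $x^*\in X$ with $\lim_{n\to\infty}p(x_n,x^*)=p(x^*,x^*)$.

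Next I would identify $x^*$ as a fixed point of $T$. The natural route is to use continuity of $T$ in the topology $\tau(p)$ together with the Hausdorff hypothesis. First, $T$ is $\tau(p)$-continuous: from $p(Tx,Ty)\le \lambda p(x,y)$ one checks (mimicking Lemma~\ref{lem2.2}) that $T$ maps $\tau(p)$-convergent sequences to $\tau(p)$-convergent sequences, so $T x_n \xrightarrow{\tau(p)} T x^*$; but $Tx_n = x_{n+1}$, and $(x_{n+1})$ is a subsequence (shift) of $(x_n)$, hence also $\tau(p)$-convergent to $x^*$. Since $(X,p,K)$ is Hausdorff, $\tau(p)$-limits are unique, so $Tx^* = x^*$. (This is precisely the place where the Hausdorff hypothesis is essential — without it, as the example preceding the theorem shows, $\tau(p)$-limits need not be unique, and one cannot conclude $Tx^*=x^*$; this resolves the difficulty flagged in Problem~\ref{problem1}.)

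For $p(x^*,x^*)=0$ and uniqueness, I would argue as follows. Once $Tx^*=x^*$, the contraction inequality gives $p(x^*,x^*)=p(Tx^*,Tx^*)\le\lambda p(x^*,x^*)$, and since $0\le\lambda<1$ this forces $p(x^*,x^*)=0$. For uniqueness, suppose $Ty^*=y^*$ as well; then $p^+(x^*,y^*)=p^+(Tx^*,Ty^*)\le\lambda p^+(x^*,y^*)$ by Lemma~\ref{lemma1}(a), so $p^+(x^*,y^*)=0$, whence $p(x^*,y^*)=p(y^*,x^*)=0$; combined with $p(x^*,x^*)=p(y^*,y^*)=0$, axiom (3) of Definition~\ref{maindef} yields $x^*=y^*$.

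The main obstacle is the fixed-point identification step, i.e.\ passing from $x_n\xrightarrow{\tau(p)}x^*$ to $Tx^*=x^*$. The subtlety is that $\tau(p)$-convergence is defined via $p(x_n,x^*)\to p(x^*,x^*)$ rather than $p(x_n,x^*)\to 0$, so a little care is needed to verify that $T$ genuinely maps $\tau(p)$-convergent sequences to $\tau(p)$-convergent sequences with the correct limit; the inequality $p(Tx_n,Tx^*)\le\lambda p(x_n,x^*)$ together with $p(Tx^*,Tx^*)\le\lambda p(x^*,x^*)$ and a lower bound of the form $p(x^*,x^*)\le p(x_n,x^*)$-type estimates from axioms (1a)--(1b) and (2) should pin this down, but this is the delicate computation. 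Everything else is either quoted from Shukla's theorems via $p^+$ or is a routine consequence of the axioms.
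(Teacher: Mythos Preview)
Your overall strategy coincides with the paper's: obtain a $\tau(p^+)$-Cauchy orbit via Lemma~\ref{lemma1}, apply $p$-sequential completeness to get a $\tau(p)$-limit $x^*$, show that the shifted orbit also $\tau(p)$-converges to $Tx^*$, and invoke Hausdorffness to conclude $Tx^*=x^*$. Your uniqueness and $p(x^*,x^*)=0$ arguments are fine (the paper actually omits uniqueness).

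The one substantive divergence is exactly the step you flag as ``delicate'', and there your proposed order of operations does not close. From $p(Tx_n,Tx^*)\le\lambda\,p(x_n,x^*)$ and axiom~(1b) you get only
\[
p(Tx^*,Tx^*)\;\le\;\liminf_n p(Tx_n,Tx^*)\;\le\;\limsup_n p(Tx_n,Tx^*)\;\le\;\lambda\,p(x^*,x^*),
\]
and since you also only know $p(Tx^*,Tx^*)\le\lambda\,p(x^*,x^*)$, nothing forces the outer terms to coincide; so $Tx_n\xrightarrow{\tau(p)}Tx^*$ does \emph{not} follow from the inequalities you list unless $p(x^*,x^*)=0$ is already in hand. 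In other words, deriving $p(x^*,x^*)=0$ only \emph{after} the fixed-point identification is circular at this spot.

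The paper reverses the order: it first asserts $p(u,u)=0$ (invoking the Shukla-type estimate from the proof of Theorem~\ref{p-banach}, which gives $\lim_{n,m}p^+(x_n,x_m)=0$), and then the continuity check collapses to the single line $p(Tx_n,Tu)\le\lambda\,p(x_n,u)\to 0=p(Tu,Tu)$, i.e.\ $x_{n+1}\xrightarrow{\tau(p)}Tu$. So the missing idea in your proposal is precisely to secure $p(x^*,x^*)=0$ \emph{before} attempting the $\tau(p)$-continuity argument, not after.
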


\begin{proof}
	From the proof of Theorem \ref{p-banach}, for any initial point $x_0\in X$, the sequence of iterates $\{ T^nx_0\}$ is $\tau(p)$-convergent to some $u\in X$ such that $p(u,u)=0$.
Hence $p(Tu,Tu)\leq \lambda p(u,u)$ implies that $p(Tu,Tu)=0$. Moreover 
\[ \lim\limits_{n\to \infty} p(Tx_n,Tu) \leq \lambda p(x_n,u) = p(u,u)=0 \]
implies that \[\lim\limits_{n\to \infty}p(Tx_n,Tu) = 0 = p(Tu,Tu) =\lim\limits_{n\to \infty}p(x_{n+1},Tu).  \]	

That is \[ \lim\limits_{n\to \infty}p(x_{n+1},Tu) =p(Tu,Tu)=p(u,u)=0, \]	
and by virtue of $(X, p,K)$ being Hausdorff, $Tu=u.$
This conclude the proof.
\end{proof}

Next, we look at the existence result for a Kannan type mappings in $K$-partial quasi-metric spaces. Here again, the results by Shukla's \cite{shu} will be of great use.

\begin{lemma}\label{lemma2}
	Let $T$ be a $p$-Kannan mapping on the $K$-partial quasi-metric space $(X, p,K)$ with $0\leq \lambda<1/2$  and $\lambda\neq 1/K$. Then:

	\begin{enumerate}

		\item[(a)] $T$ is a Kannan mapping on the partial $K$-metric space $(X,p^+)$ with $0\leq \lambda<1/2, \lambda < 1/K$, i.e.
		$$p^+(Tx,Ty) \leq \lambda [p^+(x,Tx) + p^+(y,Ty)], \quad \text{ for all } x, y \in X.$$
		
		\item[(b)] For any $x_0 \in X$, the sequence $(T^n x_0)_{n\geq 1}$, is $\tau(p^+)$-Cauchy sequence.
	\end{enumerate} 
\end{lemma}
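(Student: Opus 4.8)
The plan is to mirror the structure of the proof of Lemma \ref{lemma1}, replacing the Banach contraction estimate by the Kannan one and invoking Theorem \ref{shutheorem2} in place of Theorem \ref{shutheorem1}. For part (a), I would start from the two instances of the defining inequality of a $p$-Kannan mapping, namely $p(Tx,Ty)\le\lambda[p(Tx,x)+p(y,Ty)]$ and its conjugate $p(Ty,Tx)\le\lambda[p(Ty,y)+p(x,Tx)]$, add them, and regroup the right-hand side as $\lambda\bigl[(p(Tx,x)+p(x,Tx))+(p(y,Ty)+p(Ty,y))\bigr]=\lambda[p^+(x,Tx)+p^+(y,Ty)]$, while the left-hand side is exactly $p^+(Tx,Ty)$. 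This shows $T$ is a Kannan mapping on $(X,p^+)$. The constant is unchanged, $0\le\lambda<1/2$; and the hypothesis $\lambda\ne 1/K$ together with $\lambda<1/2$ gives $\lambda<1/K$ when $K\ge 2$, and for $1\le K<2$ one has $\lambda<1/2<1/K$ automatically — so in all cases $\lambda<1/K$, which is precisely the restriction $\lambda\ne 1/s$ (with $s=K$) required by Theorem \ref{shutheorem2}.

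For part (b), once $T$ is known to be a Kannan mapping on the partial $K$-metric space $(X,p^+)$ with the admissible constant, I would simply appeal to the iteration scheme inside the proof of Theorem \ref{shutheorem2}: starting from any $x_0\in X$, the sequence $x_n=T^nx_0$ satisfies a geometric-type decay of $p^+(x_n,x_{n+1})$ (from the Kannan inequality applied to consecutive iterates one gets $p^+(x_{n+1},x_{n+2})\le\frac{\lambda}{1-\lambda}p^+(x_n,x_{n+1})$ with $\frac{\lambda}{1-\lambda}<1$), and the relaxed triangle inequality for the partial $K$-metric $p^+$ then forces $(x_n)$ to be $\tau(p^+)$-Cauchy. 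This is exactly the Cauchy argument carried out by Shukla, so rather than reproducing it I would cite ``the classical proof of Theorem \ref{shutheorem2}'' as was done for Lemma \ref{lemma1}(b).

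The only genuinely delicate point — and the one I would make sure to spell out — is the reconciliation of the two constraints on $\lambda$: the statement hypothesizes $\lambda\ne 1/K$, whereas Theorem \ref{shutheorem2} wants $\lambda<1/s$. The resolution is the case split on $K$ above, showing $\lambda<1/2$ together with $\lambda\ne 1/K$ already implies $\lambda<1/K$; I expect this little argument to be the main thing a careful referee would want justified. Everything else is a routine transcription of the symmetric ($p^+$) computation and an appeal to the already-quoted Shukla theorem.
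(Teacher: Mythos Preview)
Your proof is essentially the same as the paper's: add the Kannan inequality to its conjugate to obtain the $p^+$-inequality, then quote the Cauchy argument inside Shukla's Theorem~\ref{shutheorem2}. The paper does exactly this and nothing more.

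One correction, though: your case split ``$\lambda<1/2$ and $\lambda\ne 1/K$ imply $\lambda<1/K$ when $K\ge 2$'' is false (take $K=4$, $\lambda=0.4$). The paper does not attempt this argument; it simply asserts $\lambda<1/K$. The reason it holds is that the \emph{definition} of a $p$-Kannan mapping already requires $0\le\lambda K<1/2$, hence $\lambda<1/(2K)<1/K$ automatically, and both constraints in the lemma statement are redundant consequences of that. So drop the case split and, if you want to justify the constant at all, just point back to the definition.
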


\begin{proof}

(a) Given $x,y \in X$, we have

$$p(T x, T y) \leq \lambda [p(Tx, x) +p(y,Ty)] \quad \text{and} \quad p(T y, T x) \leq \lambda [p(Ty,y) + p(x, Tx)],$$

so 

$$p^+(T x, T y) \leq \lambda [p^+(x, Tx)+ p^+(y,Ty)]. $$
It follows that $T$ is a Kannan mapping on the partial $K$-metric space $(X,p^+)$ with $0\leq \lambda<1/2$ and $\lambda< 1/K$.

\vspace*{0.3cm}

(b) Since $T$ is a Kannan mapping on the partial $K$-metric space $(X,p^+)$, the classical proof of Theorem \ref{shutheorem2} shows that for any $x_0 \in X,$, the sequence $(T^n x_0)_{n\geq 1}$, is $\tau(p^+)$-Cauchy sequence.
\end{proof}

Here again, we are facing the following problem:

\begin{problem}\label{problem2}
	We would like to give a counter-example to the following statement:
	\textcolor{blue}{Let $(X, p,K)$ be a $p$-sequentially complete $K$-partial quasi-metric space. Then every $p$-Kannan mapping on $(X, p,K)$ has a unique fixed point $x^*$ and $p(x^*,x^*)=0.$}
\end{problem}

So, a reasonable and straightforward formulation of the Kannan contraction principle in the setting of a $K$-partial quasi-metric space seems to be:

\begin{theorem}\label{p-kannan}
	Let $(X, p,K)$ be a $\tau(p)$-bicomplete $K$-partial quasi-metric space. Then every $p$-Kannan mapping on $(X, p,K)$ with constant $0<\lambda<1/2$ and $\lambda\neq 1/K$ has a unique fixed point $x^*$ and $p(x^*,x^*)=0.$
\end{theorem}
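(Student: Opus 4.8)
The plan is to reduce Theorem~\ref{p-kannan} to Shukla's Theorem~\ref{shutheorem2} via the auxiliary symmetric object $p^+$, exactly as Theorem~\ref{p-banach} was reduced to Theorem~\ref{shutheorem1}. The mechanism is already packaged in Lemma~\ref{lemma2}: if $T$ is a $p$-Kannan mapping on $(X,p,K)$ with $0<\lambda<1/2$ and $\lambda\neq 1/K$, then $T$ is a Kannan mapping on the partial $K$-metric space $(X,p^+)$ with the same constant satisfying $\lambda<1/2$ and $\lambda\neq 1/K$, and moreover for any $x_0\in X$ the iterate sequence $(T^nx_0)_{n\geq 1}$ is $\tau(p^+)$-Cauchy.

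First I would invoke the Remark following Definition~\ref{deff1}: the $\tau(p)$-bicompleteness of $(X,p,K)$ is equivalent to the completeness of the partial $K$-metric space $(X,p^+)$ in the sense of Shukla (since the topologies of $p^+$ and $p^s$ coincide, and a $\tau(p^+)$-Cauchy sequence is precisely a Cauchy sequence in Shukla's sense). Hence $(X,p^+)$ is a complete partial $b$-metric space with coefficient $K\geq 1$. Combining this with part (a) of Lemma~\ref{lemma2}, the hypotheses of Theorem~\ref{shutheorem2} are met with $s=K$ and the given $\lambda$. That theorem then yields a unique $u\in X$ with $Tu=u$ and $p^+(u,u)=0$.

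It remains to transfer the conclusion back to $p$. From $p^+(u,u)=0$ and $p^+(u,u)=p(u,u)+p(u,u)=2p(u,u)$ we get $p(u,u)=0$; set $x^*=u$. Uniqueness is inherited directly: any $p$-fixed point of $T$ is in particular a $p^+$-fixed point (by part (a), or simply because $Tv=v$ forces $p^+(Tv,Tv)=p^+(v,v)$ trivially and the Kannan inequality on $p^+$ pins it down), so it must equal $x^*$ by the uniqueness clause of Theorem~\ref{shutheorem2}. This finishes the argument.

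I do not anticipate a genuine obstacle, since the structural work has been front-loaded into Lemma~\ref{lemma2} and the Remark identifying $\tau(p)$-bicompleteness with Shukla completeness; the only point requiring a moment's care is checking that the constant $\lambda$ passes cleanly through part (a) of Lemma~\ref{lemma2} with both constraints $\lambda<1/2$ and $\lambda\neq 1/K$ preserved (so that Theorem~\ref{shutheorem2} applies with $s=K$), and that no condition on $K$ versus $\lambda$ is silently needed beyond what is assumed. If one wanted the sharper statement, the same proof mutatis mutandis gives a $\tau(p)$-Smyth complete version, paralleling Theorem~\ref{p-banach1}, by observing that the $(T^nx_0)$ are in fact $\tau(p)$-Cauchy; but for the stated theorem the bicomplete hypothesis already suffices.
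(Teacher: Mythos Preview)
Your proposal is correct and follows essentially the same route as the paper: invoke Lemma~\ref{lemma2} to pass from the $p$-Kannan condition on $(X,p,K)$ to the Kannan condition on the partial $K$-metric space $(X,p^+)$, use $\tau(p)$-bicompleteness to identify $(X,p^+)$ as complete in Shukla's sense, and then apply Theorem~\ref{shutheorem2}. You are in fact somewhat more explicit than the paper about the completeness identification and the transfer of $p^+(x^*,x^*)=0$ back to $p(x^*,x^*)=0$, but the argument is the same.
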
 

\begin{proof}
	From Lemma \ref{lemma2}, we know $T$ is a Kannan mapping on the partial $K$-metric space $(X,p^+)$ with $0\leq \lambda<1$. The proof of Theorem\ref{shutheorem2} then shows that $T$ has a unique fixed point $x^*\in X$ and $p(x^*,x^*)=0.$
\end{proof}

The formulation using $\tau(p)$-Smyth completeness works as well.

\begin{theorem}\label{p-kannan}
	Let $(X, p,K)$ be a $\tau(p)$-Smyth complete $K$-partial quasi-metric space. Then every $p$-Kannan mapping on $(X, p,K)$ with constant $0<\lambda<1/2$ and $\lambda\neq 1/K$ has a unique fixed point $x^*$ and $p(x^*,x^*)=0.$
\end{theorem}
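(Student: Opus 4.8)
The plan is to reduce the Smyth-complete case to the already-established machinery for partial $K$-metric spaces, exactly as in the proof of Theorem \ref{p-kannan} above. First I would invoke Lemma \ref{lemma2}: since $T$ is a $p$-Kannan mapping on $(X,p,K)$ with $0<\lambda<1/2$ and $\lambda\neq 1/K$, part (a) gives that $T$ is a Kannan mapping on the partial $K$-metric space $(X,p^+)$ with $0\leq\lambda<1/2$ and $\lambda<1/K$, and part (b) gives that for any $x_0\in X$ the sequence of iterates $(T^nx_0)_{n\geq 1}$ is $\tau(p^+)$-Cauchy. The point of switching to Smyth completeness rather than bicompleteness is that we now only need to know the iterate sequence is $\tau(p)$-Cauchy; and indeed the proof of Theorem \ref{shutheorem2}, run on $p^+$, shows that the partial-sums estimate for $p^+(T^nx_0,T^mx_0)$ controls $p(T^nx_0,T^mx_0)$ (since $p\leq p^+$), so $(T^nx_0)$ is in fact $\tau(p)$-Cauchy.

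Next, by $\tau(p)$-Smyth completeness, the $\tau(p)$-Cauchy sequence $(T^nx_0)$ converges in the topology $\tau(p^+)$ to some $x^*\in X$; equivalently $x^*$ is a limit of the sequence in the partial $K$-metric space $(X,p^+)$. From here the classical argument behind Theorem \ref{shutheorem2} applies verbatim: one shows $p^+(x^*,x^*)=0$, then that $Tx^*=x^*$ by estimating $p^+(x^*,Tx^*)$ via the Kannan inequality and the convergence $p^+(T^{n}x_0,x^*)\to p^+(x^*,x^*)=0$, and finally that the fixed point is unique because two fixed points $u,v$ would satisfy $p^+(u,v)=p^+(Tu,Tv)\leq\lambda[p^+(u,Tu)+p^+(v,Tv)]=\lambda[p^+(u,u)+p^+(v,v)]=0$. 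Since $p^+=p\vee p^{-1}$ up to equivalence and $p(x^*,x^*)\leq p^+(x^*,x^*)=0$, we conclude $p(x^*,x^*)=0$ as well.

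The only genuinely new content beyond Theorem \ref{p-kannan} is the observation that Smyth completeness suffices, i.e. that we never needed the full $\tau(p^+)$-Cauchyness to be matched by a $\tau(p^+)$-limit coming from $\tau(p^+)$-completeness — a $\tau(p)$-Cauchy hypothesis plus a $\tau(p^+)$-limit is enough. So the main (and essentially only) thing to check carefully is Step 1: that the iterate sequence, which Lemma \ref{lemma2}(b) delivers as $\tau(p^+)$-Cauchy, is also $\tau(p)$-Cauchy, so that Smyth completeness can be applied to it. This is immediate from $p\leq p^+$, but it is the hinge of the argument and should be stated explicitly. Everything else is a transcription of Shukla's proof of Theorem \ref{shutheorem2} to the complete partial $K$-metric space $(X,p^+)$, so I would not reproduce those computations in detail.
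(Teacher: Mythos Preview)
Your proposal is correct and matches the paper's intended approach. The paper actually states this Smyth-complete Kannan theorem without proof (it simply says ``The formulation using $\tau(p)$-Smyth completeness works as well''), but by direct analogy with its proof of Theorem~\ref{p-banach1} the argument is exactly the one you outline: Lemma~\ref{lemma2} gives that the iterate sequence is $\tau(p^+)$-Cauchy, hence $\tau(p)$-Cauchy (since the Shukla estimate drives $p^+(T^nx_0,T^mx_0)\to 0$ and $p\le p^+$), Smyth completeness then supplies a $\tau(p^+)$-limit, and the remainder of Shukla's proof of Theorem~\ref{shutheorem2} on $(X,p^+)$ yields the fixed point, its uniqueness, and $p^+(x^*,x^*)=0$. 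Your identification of the hinge --- that one must explicitly pass from $\tau(p^+)$-Cauchy to $\tau(p)$-Cauchy in order to invoke Smyth completeness --- is exactly right and is more carefully stated than in the paper's parallel Banach case.
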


In the coming lines, we obtain some Reich type fixed point theorems in $K$-partial quasi-metric spaces by combining the Banach and the Kannan contraction conditions.

\begin{definition}\label{p-reich}
	Let $(X, p,K)$ be a $K$-partial quasi-metric space.
	
 By a $p$-Reich mapping on $X$, we mean a self-mapping $T$ on $X$ such that there exist nonnegative constants $ \lambda, \mu, \delta$ satisfying $\lambda+ \mu+ \delta<1/K$ and

		$$p(T x, T y) \leq \lambda p(x, y) + \mu p(Tx, x) + \delta p(y,Ty)
		\quad \text{ for all } x, y \in X.$$
	
	We shall say that $T$ is a $p$-Reich mapping with constants $ \lambda, \mu, \delta$.
\end{definition}

\begin{lemma}\label{lemma3f}
	Let $T$ be a $p$-Reich mapping with constants $ \lambda, \mu, 2\delta$, on the $K$-partial quasi-metric space $(X, p,K)$. Then

	\begin{enumerate}

		\item[(a)] $T$ is a Reich mapping on the partial $K$-metric space $(X,p^+)$, i.e. there exist some nonnegative constants $ a,b,c$ satisfying $a+ b+ c<1/K$ and 
		$$p^+(Tx,Ty) \leq a p^+(x, y) + b p^+(Tx, x) + c p^+(y,Ty), \quad \text{ for all } x, y \in X.$$
		
		\item[(b)] For any $x_0 \in X$, the sequence $(T^n x_0)_{n\geq 1}$, is a $\tau(p^+)$-Cauchy sequence.
	\end{enumerate} 
\end{lemma}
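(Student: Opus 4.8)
The plan is to mirror the structure of Lemma \ref{lemma1} and Lemma \ref{lemma2}, which handled the $p$-Banach and $p$-Kannan cases, and reduce the statement about the quasi-metric-type object $p$ to a statement about the symmetric object $p^+ = p + p^{-1}$, for which the results of Shukla (here Theorem \ref{shutheorem1} and Theorem \ref{shutheorem2}, and more generally the Reich-type argument on partial $b$-metric spaces) apply.

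For part (a), I would start from the defining inequality of a $p$-Reich mapping with constants $\lambda,\mu,2\delta$, namely
\[
p(Tx,Ty) \leq \lambda\, p(x,y) + \mu\, p(Tx,x) + 2\delta\, p(y,Ty),
\]
and simultaneously apply it with the roles of $x$ and $y$ interchanged, i.e.
\[
p(Ty,Tx) \leq \lambda\, p(y,x) + \mu\, p(Ty,y) + 2\delta\, p(x,Tx).
\]
Adding these two inequalities and grouping the terms so that $p(x,y)+p(y,x)$ becomes $p^+(x,y)$, $p(Tx,x)+p(x,Tx)$ becomes $p^+(Tx,x)$, and $p(y,Ty)+p(Ty,y)$ becomes $p^+(y,Ty)$, one is tempted to read off $a=\lambda$, $b = \mu$, $c = 2\delta$; the subtlety is that after the addition the coefficients $\mu$ and $2\delta$ get shuffled between the $p^+(Tx,x)$ and $p^+(y,Ty)$ slots, so the honest bound is
\[
p^+(Tx,Ty) \leq \lambda\, p^+(x,y) + (\mu+2\delta)\,\big(\,\text{something dominating both } p^+(Tx,x) \text{ and } p^+(y,Ty)\,\big),
\]
and the cleanest way out is to take $a = \lambda$, $b = c = \mu + 2\delta$ (or, being more careful, $b = c = \max\{\mu,2\delta\}$ after regrouping), and check that $a + b + c < 1/K$ follows from $\lambda + \mu + \delta < 1/K$ — this is where the specific choice of writing the third constant as $2\delta$ rather than $\delta$ in the hypothesis pays off, since it is engineered precisely so that $\lambda + (\mu+2\delta) + (\mu+2\delta)$, or whatever regrouped sum one ends up with, stays below $1/K$. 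I expect this bookkeeping of constants to be the main (minor) obstacle: one must be honest about which symmetric sum dominates the mixed terms and verify the constant inequality survives.

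For part (b), once (a) is in hand, $T$ is a genuine Reich contraction on the partial $K$-metric (partial $b$-metric) space $(X,p^+)$ with coefficient $K$ and constants $a,b,c$ satisfying $a+b+c<1/K$. The classical iteration argument for Reich contractions on $b$-metric (here partial $b$-metric) spaces — essentially the same one underlying the proofs of Theorem \ref{shutheorem1} and Theorem \ref{shutheorem2}, adapted by combining the Banach and Kannan estimates — shows that the orbit $(T^n x_0)_{n\geq 1}$ satisfies $p^+(T^n x_0, T^{n+1} x_0) \leq h^n\, p^+(x_0,Tx_0)$ for a suitable contraction ratio $h = (a+b)/(1-c) < 1/K \leq 1$ obtained from the self-distance estimate $p^+(T^n x_0, T^{n+1}x_0) \leq a\,p^+(T^{n-1}x_0,T^n x_0) + b\,p^+(T^n x_0,T^{n-1}x_0)+ c\,p^+(T^{n+1}x_0,T^n x_0)$ (using the symmetry of $p^+$). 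Feeding this geometric decay into the relaxed triangle inequality for $p^+$ with coefficient $K$, exactly as in Shukla's proofs, yields that the partial self-distances $p^+(T^n x_0,T^n x_0)$ tend to $0$ and that $\lim_{n,m\to\infty} p^+(T^n x_0, T^m x_0) = 0$, so the orbit is $\tau(p^+)$-Cauchy. I would simply invoke ``the classical proof of Theorem \ref{shutheorem1}/\ref{shutheorem2}'' to package this, as was done in the analogous Lemmas \ref{lemma1}(b) and \ref{lemma2}(b), rather than reproducing the geometric-series estimate in full.
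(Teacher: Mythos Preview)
Your plan is exactly the paper's: its entire proof of this lemma reads ``Analogue to the proofs of Lemma~\ref{lemma1} and Lemma~\ref{lemma2},'' and that is precisely what you propose --- symmetrize the $p$-Reich inequality by adding it to its $(x,y)\mapsto(y,x)$ swap to land in $(X,p^+)$, then invoke the Shukla-type iteration for part~(b). You are in fact more explicit than the paper, and you correctly isolate the constant bookkeeping in part~(a) as the only non-routine point; note that the paper does not carry this step out either, and your tentative explanation for the ``$2\delta$'' (that it is engineered so that $a+b+c<1/K$ with $b=c=\mu+2\delta$ or $b=c=\max\{\mu,2\delta\}$) does not actually check out arithmetically --- this loose end is present in the paper as well.
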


\begin{proof}
Analogue to the proofs of Lemma \ref{lemma1} and Lemma \ref{lemma2}.

\end{proof}

We conclude this section by stating, without proving (since the proof is straightforward), analogue of Theorems \ref{p-banach} and \ref{p-kannan}

\begin{theorem}
	Let $(X, p,K)$ be a $\tau(p)$-bicomplete $K$-partial quasi-metric space. Then every $p$-Reich mapping on $(X, p,K)$ has a unique fixed point $x^*$ and $p(x^*,x^*)=0.$
\end{theorem}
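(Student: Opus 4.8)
The plan is to reduce the statement to Shukla's fixed point theorem for Reich-type contractions on partial $K$-metric spaces, exactly as was done for the Banach and Kannan cases. First I would invoke Lemma \ref{lemma3f}(a): given a $p$-Reich mapping $T$ on the $K$-partial quasi-metric space $(X,p,K)$ with constants $\lambda,\mu,\delta$ satisfying $\lambda+\mu+\delta<1/K$, we obtain that $T$ is a Reich mapping on the partial $K$-metric space $(X,p^+)$ with some nonnegative constants $a,b,c$ with $a+b+c<1/K$; symmetrising the two defining inequalities $p(Tx,Ty)\le\lambda p(x,y)+\mu p(Tx,x)+\delta p(y,Ty)$ and $p(Ty,Tx)\le\lambda p(y,x)+\mu p(Ty,y)+\delta p(x,Tx)$ and adding them gives $p^+(Tx,Ty)\le\lambda p^+(x,y)+\max\{\mu,\delta\}\,[\,p^+(x,Tx)+p^+(y,Ty)\,]$, so one may take $a=\lambda$, $b=c=\max\{\mu,\delta\}$.

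Next, I would use Lemma \ref{lemma3f}(b): for any $x_0\in X$ the sequence of iterates $(T^nx_0)_{n\ge1}$ is a $\tau(p^+)$-Cauchy sequence. Since $(X,p,K)$ is assumed $\tau(p)$-bicomplete, by the remark following Definition \ref{deff1} this is the same as $(X,p^s)$-completeness, and since $\tau(p^+)$ and $\tau(p^s)$ coincide and $\tau(p)$-bicompleteness is equivalent to completeness of the partial $K$-metric space $(X,p^+)$ in the sense of Shukla, the space $(X,p^+)$ is a complete partial $b$-metric space with coefficient $K$. Then the classical Reich-type fixed point argument on complete partial $b$-metric spaces (the analogue of Theorems \ref{shutheorem1} and \ref{shutheorem2}) shows that $T$ has a unique fixed point $x^*\in X$ with $p^+(x^*,x^*)=0$, whence $p(x^*,x^*)=0$ as well because $0\le p(x^*,x^*)\le p^+(x^*,x^*)=0$.

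The only delicate point is making sure the Reich-type contraction constants on $(X,p^+)$ still satisfy the strict bound $a+b+c<1/K$ that the completeness-based argument requires; this is precisely why Lemma \ref{lemma3f} is stated with the third constant written as $2\delta$ rather than $\delta$. With $a=\lambda$, $b=c=\max\{\mu,\delta\}$ and starting hypothesis $\lambda+\mu+2\delta<1/K$ one indeed gets $a+b+c=\lambda+2\max\{\mu,\delta\}\le\lambda+\mu+2\delta<1/K$, so the bound is preserved and Shukla's machinery applies verbatim. Since the paper explicitly says the proof is straightforward and declines to write it out, I would simply point to Lemma \ref{lemma3f} together with the partial-$b$-metric Reich theorem and note that uniqueness and $p(x^*,x^*)=0$ transfer back from $(X,p^+)$ to $(X,p,K)$ by the sandwich inequality $p(x,x)\le p^+(x,x)$.
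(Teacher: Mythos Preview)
Your proposal is correct and follows exactly the route the paper intends: reduce to the symmetric partial $K$-metric $(X,p^+)$ via Lemma~\ref{lemma3f}, then invoke the Reich-type analogue of Theorems~\ref{shutheorem1}--\ref{shutheorem2}; the paper itself declines to write the proof, calling it straightforward and pointing to the Banach and Kannan patterns, which is precisely what you do.

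One small algebraic slip to flag: your claimed inequality $\lambda+2\max\{\mu,\delta\}\le\lambda+\mu+2\delta$ fails when $\mu>2\delta$ (then $2\max\{\mu,\delta\}=2\mu>\mu+2\delta$), so the passage from the asymmetric Reich constants to symmetric ones with $a+b+c<1/K$ is not quite secured by the ``$2\delta$'' device alone. The paper's own Lemma~\ref{lemma3f} glosses over this same point (its proof is just ``analogue to Lemmas~\ref{lemma1} and~\ref{lemma2}''), so you are not worse off than the source; but if you want a clean statement, either assume without loss of generality that $\delta\ge\mu$ (relabelling if necessary, since the roles of the second and third terms are interchangeable under $x\leftrightarrow y$), or strengthen the hypothesis to $\lambda+2\max\{\mu,\delta\}<1/K$.
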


\begin{remark}
	
	One obtains a similar result it one replaces the condition
	$$p(T x, T y) \leq \lambda p(x, y) + \mu p(Tx, x) + \delta p(y,Ty)
	\quad \text{ for all } x, y \in X$$
	
from Definition \ref{p-reich} by

	\begin{equation}\label{maxcond}
	p(T x, T y) \leq \lambda \max\{p(x, y),p(Tx, x),p(y,Ty)\}
	\quad \text{ for all } x, y \in X,
	\end{equation}

where $\lambda$ is such that $3\lambda<1.$

\end{remark}

\section{$p$-Chatterjea Contractions }

We continue our development by providing a Chatterjea type fixed point results in $K$-partial quasi-metric space. We fist give the result in the setting of partial $K$-metric spaces and then extend it. We begin with the definition of a $p$-Chatterjea contraction:

\begin{definition}
	Let $(X, p,K)$ be a $K$-partial quasi-metric space. By a $p$-Chatterjea mapping on $X$, we mean a self-mapping $T$ on $X$ such that there exists a constant $0\leq \lambda K \leq \lambda K^2 < 1/2$ satisfying
		$$p(T x, T y) \leq \lambda [p(x, Ty) + p(Tx,y)]
		\quad \text{ for all } x, y \in X.$$
	\end{definition}

The following theorem is an analogue to Chatterjea fixed point theorem in partial $K$-metric space.

\begin{theorem}\label{thmchater}
	Let $(X,p ,K)$ be a complete partial $K$-metric space with coefficient
	$K\geq  1$ and $T : X \to X$ be a mapping satisfying the following condition:
	\begin{equation}\label{chater}
	p(T x, T y) \leq \lambda [p(x, Ty) + p(Tx,y)]
	\quad \text{ for all } x, y \in X,
	\end{equation}
	with $0\leq \lambda K \leq \lambda K^2 < 1/2$. Then $T$ has a unique fixed point $x^*\in X$ and $p(x^*,x^*)=0.$
	
\end{theorem}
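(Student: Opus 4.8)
The strategy is to mimic the classical Chatterjea argument, carefully tracking the coefficient $K$ and exploiting the partial-metric axioms in the form used by Shukla \cite{shu}. Fix $x_0 \in X$ and set $x_{n+1} = T x_n$. First I would derive the basic contractive estimate for consecutive iterates: applying \eqref{chater} with $x = x_{n-1}$, $y = x_n$ gives
\[
p(x_n, x_{n+1}) = p(Tx_{n-1}, Tx_n) \leq \lambda\bigl[p(x_{n-1}, x_{n+1}) + p(x_n, x_n)\bigr].
\]
Then I would use axiom (2) (the relaxed triangle inequality for partial $K$-metrics, $p(x,z) + p(y,y) \leq K(p(x,y) + p(y,z))$) to bound $p(x_{n-1}, x_{n+1})$ in terms of $p(x_{n-1}, x_n)$ and $p(x_n, x_{n+1})$, together with the elementary facts $p(x_n,x_n) \leq p(x_{n-1},x_n)$ and $p(x_n,x_n)\le p(x_n,x_{n+1})$. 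This should yield an inequality of the form $p(x_n,x_{n+1}) \leq h \, p(x_{n-1}, x_n)$ with $h = h(\lambda,K) < 1$; here the hypothesis $\lambda K^2 < 1/2$ is precisely what forces $h < 1$. Iterating gives $p(x_n, x_{n+1}) \leq h^n p(x_0, x_1) \to 0$, and similarly $p(x_n,x_n)\to 0$.

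Next I would show $(x_n)$ is a Cauchy sequence in the complete partial $K$-metric space $(X,p)$: using axiom (2) repeatedly (a ``telescoping'' polygonal estimate, as in the lemma that an $\alpha$-quasi-metric is a relaxed $\alpha^{m-1}$-quasi-metric) to bound $p(x_n, x_m)$ for $m > n$ by $K^{m-n}\sum_{i=n}^{m-1} p(x_i, x_{i+1})$ minus nonnegative self-distance terms; since the $p(x_i,x_{i+1})$ decay geometrically this can be made arbitrarily small, provided one first shrinks the geometric ratio enough to dominate the factor $K$ (this is the standard device: write $h^n K^n$ and note one needs $hK<1$, which again follows from the coefficient bound). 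Hence $\lim_{n,m} p(x_n,x_m) = 0$ exists and is finite, so by completeness there is $u \in X$ with $\lim_n p(x_n,u) = p(u,u) = \lim_{n,m} p(x_n,x_m) = 0$.

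To see $u$ is a fixed point, apply \eqref{chater} with $x = x_n$, $y = u$:
\[
p(x_{n+1}, Tu) = p(Tx_n, Tu) \leq \lambda\bigl[p(x_n, Tu) + p(x_{n+1}, u)\bigr],
\]
bound $p(x_n, Tu) \leq K(p(x_n, u) + p(u, Tu))$ via axiom (2), let $n \to \infty$, and combine with $p(x_{n+1}, u) \to 0$ to get $p(u, Tu) \leq \lambda K\, p(u, Tu)$; since $\lambda K < 1/2 < 1$ this gives $p(u, Tu) = 0$, and symmetrically $p(Tu, u) = 0$, whence $p(Tu,Tu) \le p(Tu,u)=0$ and then axiom (3) forces $Tu = u$. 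Finally, uniqueness: if $u, v$ are both fixed points, \eqref{chater} with $x = u$, $y = v$ gives $p(u,v) \leq \lambda[p(u,v) + p(u,v)] = 2\lambda\, p(u,v)$, and $2\lambda < 2\lambda K^2 \cdot K^{-2}\cdot$ — more directly $2\lambda K^2 < 1$ with $K \ge 1$ gives $2\lambda < 1$, so $p(u,v) = 0$; likewise $p(v,u)=0$, and axiom (3) yields $u = v$, with $p(u,u)=0$ already established. The main obstacle is bookkeeping the $K$-factors in the Cauchy estimate so that the geometric decay genuinely beats the exponential growth of $K^{m-n}$; the precise inequality $0\le \lambda K\le \lambda K^2 < 1/2$ is engineered so that both the contraction ratio $h$ and the product $hK$ stay below $1$, and making this explicit is the only nontrivial point.
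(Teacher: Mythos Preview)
Your overall strategy is sound and would lead to a correct proof, but the Cauchy step is handled differently from the paper, and your write-up of that step is imprecise. The paper does \emph{not} telescope: it applies the Chatterjea inequality~\eqref{chater} directly to $p(x_n,x_m)=p(Tx_{n-1},Tx_{m-1})\le\lambda[p(x_{n-1},x_m)+p(x_n,x_{m-1})]$, then uses one instance of axiom~(2) on each bracketed term to reintroduce $p(x_n,x_m)$ and solve, obtaining
\[
p(x_n,x_m)\le \frac{\lambda K\bigl(p_{n-1}+p_{m-1}\bigr)}{1-2\lambda K}\longrightarrow 0,
\]
which needs only $2\lambda K<1$. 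Your telescoping route also works, but the bound you actually wrote, $K^{m-n}\sum_{i=n}^{m-1}p(x_i,x_{i+1})$, is too crude: the factor $K^{m-n}$ blows up as $m\to\infty$ with $n$ fixed. What you need is the sharper iterated estimate $p(x_n,x_m)\le\sum_{j=0}^{m-n-1}K^{j+1}p_{n+j}\le Kh^{n}p_0\sum_{j\ge 0}(Kh)^j$, which converges precisely when $Kh<1$; with $h=\frac{\lambda K}{1-\lambda K}$ this is $\lambda K^2+\lambda K<1$, and that does follow from $\lambda K^2<1/2$ and $\lambda K\le\lambda K^2<1/2$. So your route is valid once the bound is stated correctly, but the paper's trick is slicker and uses the contractive hypothesis more efficiently.

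One further slip: in the fixed-point verification you pass from $\limsup_n p(x_{n+1},Tu)\le\lambda K\,p(u,Tu)$ directly to $p(u,Tu)\le\lambda K\,p(u,Tu)$, but recovering $p(u,Tu)$ from $p(x_{n+1},Tu)$ costs another factor of $K$ via axiom~(2), giving $p(u,Tu)\le\lambda K^{2}\,p(u,Tu)$. This is harmless since $\lambda K^{2}<1/2<1$, and it is exactly where the paper uses the full strength of the hypothesis (the paper's bound has $1-\lambda K^{2}$ in the denominator). Your uniqueness and $p(u,u)=0$ arguments match the paper's.
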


\begin{proof}\hspace*{0.2cm}
	
	\vspace*{0.3cm}
	
	\underline{Uniqueness:}
	Let us first show that if $T$ has a fixed point, then it is unique. We shall
	show that, if $u \in X$ is a fixed point of $T$, that is, $T u = u$, then $p(u, u) = 0$.
	From \eqref{chater} we obtain
	$$p(u, u) = p(T u, T u) \leq \lambda[p(u, T u) + p(Tu,  u)] = 2\lambda p(u, u) < p(u, u),$$
	a contradiction. Therefore the equality $p(u, u) = 0$ must hold.
	At this point, it is crucial to recall that in a partial $K$-metric space $(X,p ,K)$, if $x, y \in X$ and $p(x, y) = 0$, then
	$x = y$.
	Suppose now that $u, v \in X$ are two fixed points of $T$, that is, $T u = u, T v = v$. From \eqref{chater}, we can write
	\begin{align*}
			p(u, v) = p(T u, T v) &\leq \lambda [p(u, T u) + p(Tv, v)] \\
			& = \lambda [p(u, u) + p(v, v)] = 0.
	\end{align*}
Therefore, we must have $p(u, v) = 0$, that is, $u = v$. Thus if a fixed point of $T$ exists, then it is unique.

\vspace*{0.3cm}

\underline{Existence:} For existence of fixed point, let $x_0 \in X$ be arbitrary and set $x_n = T^n x_0$ and $p_n = p(x_n , x_{n+1} )$. We can assume, without loss of generality that $p_n >0$ for all $n \geq 0$, otherwise $x_n$ is a fixed point of $T$ for at least one $n_0 \geq 0$.

For any $n\geq 0$, it follows from \eqref{chater} that

\begin{align*}
	 p_n &= p(x_n , x_{n+1} ) = p(T x_{n-1} , T x_n ) \\
	     & \leq \lambda [p(x_{n-1} , T x_n ) + p(T x_{n-1} , x_n )] \\
	     & = \lambda [p(x_{n-1} , x_{n+1} ) + p(x_{n} , x_n )] \\
	     &\leq \lambda K [p(x_{n-1} , x_{n} ) + p(x_{n} , x_{n+1} )] \\
	     & \leq \lambda K [p_{n-1} + p_n],
\end{align*}

therefore, $p_n \leq \mu p_{n-1}$ , where $\mu = \frac{\lambda K}{1- \lambda K } < 1$ (as $\lambda < 1/2K$ ). On repeating this process, we obtain

$$p_n \leq \mu^n p_0.$$ Therefore, $\lim\limits_{n\to \infty} p_n = 0$.

Now, for $n, m \in \mathbb{N}$, we have

\begin{align*}
	p(x_n , x_m ) &= p(T^n x_0 , T^m x_0 ) = p(Tx_ {n-1} , T x_{m-1} ) \\
	& \leq \lambda [ p(x_ {n-1},x_ {m}) + p(x_ {n},x_ {m-1})].
\end{align*}

On the one side, we have 

\[ p(x_ {n-1},x_ {m}) \leq  K(p(x_{n-1},x_n)+p(x_n,x_m)) -p(x_n,x_n)\]

and on the other 

\[ p(x_ {n},x_ {m-1}) \leq  K(p(x_n,x_m) + p(x_m,x_{m-1})) -p(x_m,x_m) ,\]

which yields 
\[ p(x_n,x_m) \leq -\frac{\lambda(p(x_n,x_n)+ p(x_m,x_m)) + \lambda K (p_{n-1} + p_{m-1})}{1-2\lambda K}. \]

Also, we note that if we apply the condition \eqref{chater} to the couple $(x_n,x_n)$, we have

\begin{align*}
	p(x_n,x_n) & = p(Tx_{n-1},Tx_{n-1})\\
	           & \leq \lambda [ p(x_{n-1},x_{n}) + p(x_{n},x_{n-1})] \\
	           &\leq  2 \lambda p(x_{n-1},x_{n}) = 2 \lambda p_n.
\end{align*}

As $\lim\limits_{n\to \infty} p_n = 0$, $\lim\limits_{n\to \infty} 	p(x_n,x_n)= 0$ and $\lim\limits_{n,m\to \infty}p(x_n,x_m) =0 $. Thus, the sequence $(x_n)_{n\geq 1}$ is a Cauchy sequence in $X$.

By completeness of $X$ there exists $u \in X$ such that

\begin{equation}\label{complete}
\lim\limits_{n\to \infty} p(x_n,u) = \lim\limits_{n,m\to \infty}p(x_n,x_m) = p(u,u)= 0.
\end{equation}

Now, we show that $u$ is a fixed point of $T$. 
Using again \eqref{chater}, we write

\begin{align*}
p(u, T u) & \leq	K[p(u, x_{n+1} ) + p(x_{n+1} , Tu)] -p(x_{n+1} , x_{n+1} ) \\
& \leq K[p(u, x_{n+1} ) + p(x_{n+1} , Tu)]\\
& \leq K[p(u, x_{n+1} ) + p(Tx_{n} , Tu)]\\
& \leq K[p(u, x_{n+1} ) +  \lambda (p(x_{n+1},u) + p(x_n,Tu )]\\
& \leq K [ p(u, x_{n+1} )+\lambda (p(x_{n+1},u) + \lambda (    K[ p(x_n,u)+p(u,Tu)-p(u,u)]            )           ] \\
& \leq K [ p(u, x_{n+1} ) + \lambda (    K[ p(x_n,u)+p(u,Tu)]            )           ] \\
&  = K p(u, x_{n+1} ) + \lambda K^2 p(x_n,u)+ \lambda K^2 p(u,Tu),
\end{align*}

which yields 

\[  p(u,Tu) \leq \frac{K(\lambda+1)p(u,x_{n+1})+ \lambda K^2 p(x_n,u)}{1-\lambda K^2}.\]

Note that $\lambda \neq 1/K^2$, so by \eqref{complete}, we conclude that $p(u, T u) = 0$, i.e. $T u = u$. Thus, $u$ is a unique fixed point of $T$.

\end{proof}

Now, we formulate Theorem \ref{thmchater} in the asymmetric setting by:

\begin{theorem}\label{thm2}
	Let $(X,p ,K)$ be a $\tau(p)$-bicomplete partial $K$-metric with $K>1$. Then every $p$-Chatterjea contraction 
	with coefficient $0<\lambda K < \lambda K^2<1/2$ has a unique fixed point $x^*\in X$ and $p(x^*,x^*)=0.$
	
\end{theorem}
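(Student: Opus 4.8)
\textbf{Proof plan for Theorem \ref{thm2}.}

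The plan is to mimic the ``reduction to the symmetric case'' strategy that was used for the $p$-Banach and $p$-Kannan theorems: namely, show that a $p$-Chatterjea contraction $T$ on the $K$-partial quasi-metric space $(X,p,K)$ becomes a Chatterjea contraction on the partial $K$-metric space $(X,p^+)$, and then invoke Theorem \ref{thmchater} (with $p^+$ in the role of $b$ and $K$ in the role of $s$). First I would check the contraction inequality passes to $p^+$: given $x,y\in X$, apply the defining inequality to the pair $(x,y)$ and to the pair $(y,x)$, obtaining
\begin{align*}
p(Tx,Ty) &\leq \lambda\,[\,p(x,Ty)+p(Tx,y)\,],\\
p(Ty,Tx) &\leq \lambda\,[\,p(y,Tx)+p(Ty,x)\,],
\end{align*}
and add them; since $p^+(u,v)=p(u,v)+p(v,u)$, the right-hand side collapses to $\lambda\,[\,p^+(x,Ty)+p^+(Tx,y)\,]$, so that $p^+(Tx,Ty)\leq \lambda\,[\,p^+(x,Ty)+p^+(Tx,y)\,]$. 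The coefficient condition $0<\lambda K<\lambda K^2<1/2$ is exactly the hypothesis ``$0\leq \lambda K\leq \lambda K^2<1/2$'' (with $K>1$, hence strict) needed in Theorem \ref{thmchater}, in particular $\lambda\neq 1/K^2$ and $\lambda< 1/(2K)$.

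Next I would translate the completeness hypothesis. By the Remark following Definition \ref{deff1}, the $\tau(p)$-bicompleteness of $(X,p,K)$ is equivalent to the completeness of the partial $K$-metric space $(X,p^+)$ in the sense of Shukla. Hence $(X,p^+)$ is a complete partial $K$-metric (equivalently, partial $b$-metric with coefficient $K$), and Theorem \ref{thmchater} applies verbatim: $T$ has a unique fixed point $x^*\in X$ with $p^+(x^*,x^*)=0$. Since $p^+(x^*,x^*)=p(x^*,x^*)+p(x^*,x^*)=2p(x^*,x^*)$ and $p$ is nonnegative, this forces $p(x^*,x^*)=0$, which is the assertion of the theorem. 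Uniqueness for $T$ as a self-map of the underlying set $X$ is inherited directly, since a fixed point of $T$ in $(X,p,K)$ is the same thing as a fixed point of $T$ in $(X,p^+)$.

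The only genuine point requiring care — the ``hard part'' — is making sure the reduction to $(X,p^+)$ is faithful at the level of \emph{convergence}, i.e. that the fixed point produced by the $p^+$-argument is also the $\tau(p)$-limit of the Picard iterates and, conversely, that no spurious fixed points are lost. This is handled by the observation (already recorded in the paper) that $\tau(p^+)$ is finer than $\tau(p)$, so a $\tau(p^+)$-convergent sequence is $\tau(p)$-convergent to the same limit; combined with the fact that $p(x^*,x^*)=0$ implies $x^*$ is a $\tau(p)$-limit in the strong sense $\lim_n p(x_n,x^*)=p(x^*,x^*)$, no ambiguity remains. Alternatively, one can avoid the reduction entirely and repeat the argument of Theorem \ref{thmchater} line by line, using the relaxed triangle inequality (2) of Definition \ref{maindef} in the asymmetric form and the two instances of the contraction condition at each step, but this merely reproduces the symmetric computation with heavier notation; I would prefer the clean reduction above.
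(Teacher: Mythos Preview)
Your proposal is correct and follows essentially the same route as the paper: the paper packages the passage to $p^+$ as Lemma \ref{lemma3} (adding the two instances of the Chatterjea inequality exactly as you do) and then invokes Theorem \ref{thmchater} on the complete partial $K$-metric space $(X,p^+)$. Your explicit remark that $p^+(x^*,x^*)=2p(x^*,x^*)=0$ forces $p(x^*,x^*)=0$ is a small clarification the paper leaves implicit.
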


In view of the proof of this theorem, we shall make use of the following lemma:

\begin{lemma}\label{lemma3}
	
Let $T$ be a $p$-Chatterjea mapping on the $K$-partial quasi-metric space $(X, p,K)$ with $0<\lambda K \leq \lambda K^2<1/2$. Then:

\begin{enumerate}

	\item[(a)] $T$ is a Chatterjea mapping on the partial $K$-metric space $(X,p^+)$ with $0\leq \lambda K<1/2$ and $\lambda\neq 1/K^2$, i.e.
	$$p^+(Tx,Ty) \leq \lambda [p^+(x,Ty) + p^+(Tx,y)], \quad \text{ for all } x, y \in X.$$
	
	\item[(b)] For any $x_0 \in X$, the sequence $(T^n x_0)_{n\geq 1}$, is $\tau(p^+)$-Cauchy sequence.
\end{enumerate}
\end{lemma}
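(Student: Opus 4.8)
The plan is to follow the template already used for Lemma \ref{lemma1} and Lemma \ref{lemma2}: deduce (a) from the identity $p^{+}(a,b)=p(a,b)+p(b,a)$, and then obtain (b) by rerunning the existence part of the proof of Theorem \ref{thmchater}.

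For part (a), fix $x,y\in X$ and write the $p$-Chatterjea inequality for the ordered pair $(x,y)$ and for the ordered pair $(y,x)$:
\[
p(Tx,Ty)\le\lambda\big[p(x,Ty)+p(Tx,y)\big],\qquad
p(Ty,Tx)\le\lambda\big[p(y,Tx)+p(Ty,x)\big].
\]
Summing these and regrouping the right-hand side as $\big(p(x,Ty)+p(Ty,x)\big)+\big(p(Tx,y)+p(y,Tx)\big)$ yields
\[
p^{+}(Tx,Ty)=p(Tx,Ty)+p(Ty,Tx)\le\lambda\big[p^{+}(x,Ty)+p^{+}(Tx,y)\big],
\]
which is exactly the Chatterjea condition on $(X,p^{+})$; recall from the earlier lemma that $p^{+}$ is a partial $K$-metric. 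The coefficient bounds transfer automatically: since $K\ge1$ we have $\lambda K\le\lambda K^{2}<1/2$, so $0\le\lambda K<1/2$, and $\lambda K^{2}<1/2<1$ forces $\lambda\ne1/K^{2}$.

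For part (b), use (a) to regard $T$ as a Chatterjea contraction on the partial $K$-metric space $(X,p^{+})$ with an admissible coefficient, and then repeat the existence computation in the proof of Theorem \ref{thmchater} with every occurrence of $p$ replaced by $p^{+}$. Setting $x_{n}:=T^{n}x_{0}$ and $p_{n}:=p^{+}(x_{n},x_{n+1})$, that computation gives $p_{n}\le\mu\,p_{n-1}$ with $\mu=\lambda K/(1-\lambda K)<1$, hence $p_{n}\le\mu^{n}p_{0}\to0$; together with $p^{+}(x_{n},x_{n})\le 2\lambda\,p_{n-1}\to0$ and the relaxed triangle inequality this forces $\lim_{n,m\to\infty}p^{+}(x_{n},x_{m})=0$, i.e. $(T^{n}x_{0})_{n\ge1}$ is a $\tau(p^{+})$-Cauchy sequence.

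The only point requiring care is bookkeeping: one must confirm that the window $0\le\lambda K<1/2$ together with $\lambda\ne1/K^{2}$ is precisely what the proof of Theorem \ref{thmchater} uses — the former for the geometric decay $p_{n}\le\mu\,p_{n-1}$, the latter for the final division by $1-\lambda K^{2}$ — and both are supplied by the hypothesis $0<\lambda K\le\lambda K^{2}<1/2$. There is no genuine analytic obstacle; the lemma is purely a transfer statement from the asymmetric setting to the symmetrization $p^{+}$.
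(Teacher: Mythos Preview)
Your proof is correct and follows essentially the same route as the paper: add the Chatterjea inequality for $(x,y)$ to that for $(y,x)$ to obtain the $p^{+}$-inequality in (a), then invoke the Cauchy computation from the proof of Theorem~\ref{thmchater} on $(X,p^{+})$ for (b). Your write-up is in fact slightly more explicit than the paper's, which simply states the two inequalities, sums them, and then defers (b) to Theorem~\ref{thmchater} without recapitulating the $p_n\le\mu^n p_0$ and $p^{+}(x_n,x_n)\to 0$ steps.
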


\begin{proof}
	
	(a) Given $x,y \in X$, we have
	
	$$p(T x, T y) \leq \lambda [p(x, Ty) +p(Tx,y)] \quad \text{and} \quad p(T y, T x) \leq \lambda [p(y,Tx) + p(Ty, x)],$$
	
	so 
	
	$$p^+(T x, T y) \leq \lambda [p^+(x, Ty)+ p^+(Tx,y)]. $$
	It follows that $T$ is a Chatterjea mapping on the partial $K$-metric space $(X,p^+)$ with $0\leq \lambda K<1/2$ nd $\lambda\neq 1/K^2$.
	
	(b) Since $T$ is a Chatterjea mapping, i.e. a mapping that satisfies \eqref{chater} on the partial $K$-metric space $(X,p^+)$, the proof of Theorem \ref{thmchater} shows that for any $x_0 \in X,$, the sequence $(T^n x_0)_{n\geq 1}$, is $\tau(p^+)$-Cauchy sequence.
\end{proof}

Now we present the proof to Theorem \ref{thm2}.
\begin{proof}
	From Lemma \ref{lemma3}, we know $T$ is a Chatterjea mapping on the partial $K$-metric space $(X,p^+)$ with $0\leq \lambda K<1/2$ and $\lambda\neq 1/K^2$. The proof of Theorem \ref{thmchater} then shows that $T$ has a unique fixed point $x^*\in X$ and $p(x^*,x^*)=0.$
\end{proof}

Our next step is to find a way to define a more general class of contractions which includes the three already mentioned in this manuscript.
We then introduce the concept of weak contraction for self mappings defined on $K$-partial quasi-metric spaces. The main merit of weak contractions, as already observed, in the metrical contractive type mappings is that they
unify large classes of contractive type operators, whose fixed points can be obtained by means of the Picard iteration.

\begin{definition}
	Let $(X, p,K)$ be a $K$-partial metric space. A map $T : X \to X$ is
	called weak contraction if there exist a constant $\delta \in (0, 1)$ and some
	$L \geq 0$ such that
	
	\begin{equation}\label{weakcontraction1}
		p(T x, T y) \leq \delta · p(x, y) + Lp(Tx, y) , \quad 
		\text{for all} x, y \in X .
	\end{equation}
\end{definition}

\begin{remark}
Due to the symmetry of the $p$-distance, the weak contraction
condition \eqref{weakcontraction1} implicitly includes the following dual one, namely
\begin{equation}\label{weakcontraction2}
p(T x, T y) \leq \delta · p(x, y) + Lp(x, Ty) , \quad 
\text{for all} x, y \in X .
\end{equation}
Consequently, in order to check the weak contractiveness of $T$, it
is necessary to check both \eqref{weakcontraction1} and \eqref{weakcontraction1}. It is then obvious that by setting 
$L = 0$, we recover the Banach principle for $K$-partial metric space (see \cite[Theorem 1.]{shu}) 
and hence the Banach principle is a weak contraction (that possesses a unique fixed
point).
\end{remark}

Other examples of weak contractions are given by the next propositions.

\begin{proposition}\label{prop1}
	Let $(X, p,K)$ be a $K$-partial metric space.
	Any Kannan mapping, i.e. any mapping satisfying
	the contractive condition:
	
	\begin{equation}\label{weakkannan}
		p(T x, T y) \leq \lambda [p(x, Tx)+ p(y,Ty)],
			\end{equation}
whenever $x,y \in X$ with $0\leq \lambda K<1/2$, is a weak contraction.
\end{proposition}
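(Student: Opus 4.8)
The plan is to obtain the weak contraction inequality \eqref{weakcontraction1} directly from the Kannan condition \eqref{weakkannan} by eliminating the self-distances $p(x,Tx)$ and $p(y,Ty)$ with the relaxed triangle inequality (axiom (2) of Definition \ref{maindef}), choosing the intermediate points so that, apart from a multiple of $p(Tx,Ty)$ which can be absorbed, only $p(x,y)$ and $p(Tx,y)$ remain on the right-hand side.

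First I would record the two estimates coming from axiom (2), dropping the nonnegative self-distance term it produces:
\begin{align*}
p(x,Tx) &\leq K\bigl(p(x,y)+p(y,Tx)\bigr),\\
p(y,Ty) &\leq K\bigl(p(y,Tx)+p(Tx,Ty)\bigr),
\end{align*}
where the first uses $y$ as intermediate point and the second uses $Tx$. Substituting these into \eqref{weakkannan} and using the symmetry $p(y,Tx)=p(Tx,y)$ of the partial $K$-metric yields
\[
p(Tx,Ty)\leq \lambda\bigl[K\,p(x,y)+2K\,p(Tx,y)+K\,p(Tx,Ty)\bigr].
\]

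Next I would absorb the term $\lambda K\,p(Tx,Ty)$ on the left: since $0\le\lambda K<1/2$ we have $1-\lambda K>0$, and the inequality rearranges to
\[
p(Tx,Ty)\leq \frac{\lambda K}{1-\lambda K}\,p(x,y)+\frac{2\lambda K}{1-\lambda K}\,p(Tx,y).
\]
Setting $\delta=\frac{\lambda K}{1-\lambda K}$ and $L=\frac{2\lambda K}{1-\lambda K}$, one checks that $L\ge0$ and that $\lambda K<1/2$ forces $\delta<1$; if $\lambda=0$ the map $T$ is constant and the statement is trivial for any admissible $\delta$, so we may assume $\delta\in(0,1)$. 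This is exactly \eqref{weakcontraction1}.

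Finally, as the remark following the definition of weak contraction points out, the dual condition \eqref{weakcontraction2} has to be checked as well; it follows by running the same argument with the roles of $x$ and $y$ interchanged (equivalently, using $Ty$ in place of $Tx$ as intermediate point), giving the same constants $\delta,L$ with $p(Tx,y)$ replaced by $p(x,Ty)$. The only genuinely delicate point is the choice of intermediate points in the triangle inequality: a careless choice reintroduces an unabsorbable mixed term such as $p(x,Ty)$ on the right of the $p(x,Ty)$-free estimate, so one must route $p(x,Tx)$ through $y$ and $p(y,Ty)$ through $Tx$ exactly as above; once seen, the remaining computation is routine.
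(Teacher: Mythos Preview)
Your argument is correct and is essentially identical to the paper's own proof: you apply the relaxed triangle inequality to $p(x,Tx)$ via $y$ and to $p(y,Ty)$ via $Tx$, substitute into the Kannan condition, absorb the $\lambda K\,p(Tx,Ty)$ term, and obtain $\delta=\frac{\lambda K}{1-\lambda K}$ and $L=\frac{2\lambda K}{1-\lambda K}$, then invoke the symmetry of \eqref{weakkannan} for the dual inequality. The only extras you add are the explicit mention of the symmetry $p(y,Tx)=p(Tx,y)$ and the disposal of the trivial case $\lambda=0$, neither of which changes the substance.
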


\begin{proof}
	
	\begin{align*}
	p(T x, T y) & \leq \lambda [p(x, T x) + p(y, T y) ] \\
	\end{align*}
One the one hand, we have 
$$ p(x, T x) \leq K[p(x,y) + p(y,Tx)]-p(x,x) \leq K[p(x,y) + p(y,Tx)],$$
and the other hand

$$p(y,Ty)\leq K[p(y,Tx)+p(Tx,Ty)] - p(Tx,Tx) \leq K[p(y,Tx)+p(Tx,Ty)].$$

So 

$$p(T x, T y) \leq \lambda(  K[p(x,y) + p(y,Tx)]+ K[p(y,Tx)+p(Tx,Ty)] ), $$

which yields
$$ p(T x, T y) \leq  \frac{\lambda K}{1-\lambda K}\ p(x,y) + \frac{2\lambda K}{1-\lambda K}\ p(Tx,y) \quad \text{for all } x, y \in X,$$

i.e., in view of $0< \lambda K \leq 1/2$,  \eqref{weakcontraction1} holds with $\delta = \frac{\lambda K}{1-\lambda K}$ and $L = \frac{2\lambda K}{1-\lambda K}.$
Since \eqref{weakkannan} is symmetric with respect to $x$ and $y$, \eqref{weakcontraction2} also holds.
\end{proof}

\begin{proposition}\label{prop2}
	Let $(X, p,K)$ be a $K$-partial metric space.
	Any Chatterjea mapping, i.e. any mapping satisfying
	the contractive condition:
	
	\begin{equation}\label{weakchatterjea}
	p(T x, T y) \leq \lambda [p(x, Ty)+ p(Tx,y)],
	\end{equation}
	whenever $x,y \in X$ with $0\leq \lambda K < \lambda K^2<1/2$, is a weak contraction.
\end{proposition}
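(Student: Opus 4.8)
The plan is to follow the pattern of Proposition~\ref{prop1}: feed the Chatterjea inequality \eqref{weakchatterjea} into the relaxed triangle inequality (axiom~(2) of the $K$-partial metric) in order to replace the unwanted term $p(x, Ty)$ by a combination of $p(x,y)$, $p(Tx,y)$ and $p(Tx,Ty)$, and then absorb the resulting multiple of $p(Tx,Ty)$ on the left-hand side.

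Concretely, I would first estimate $p(x, Ty)$ by applying axiom~(2) with intermediate point $y$, which gives $p(x,Ty)\le K\bigl(p(x,y)+p(y,Ty)\bigr)-p(y,y)\le K\bigl(p(x,y)+p(y,Ty)\bigr)$, and then apply it once more to $p(y,Ty)$ with intermediate point $Tx$, giving $p(y,Ty)\le K\bigl(p(y,Tx)+p(Tx,Ty)\bigr)$. Using the symmetry of the partial $K$-metric, so that $p(y,Tx)=p(Tx,y)$, these combine to $p(x,Ty)\le K\,p(x,y)+K^2 p(Tx,y)+K^2 p(Tx,Ty)$. Substituting this into \eqref{weakchatterjea} yields $p(Tx,Ty)\le \lambda K\,p(x,y)+\lambda(1+K^2)\,p(Tx,y)+\lambda K^2 p(Tx,Ty)$.

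Because $\lambda K^2<1/2<1$, the factor $1-\lambda K^2$ is strictly positive, so dividing through gives exactly \eqref{weakcontraction1} with $\delta=\frac{\lambda K}{1-\lambda K^2}$ and $L=\frac{\lambda(1+K^2)}{1-\lambda K^2}$. Here $L\ge 0$ is immediate, and $\delta\in(0,1)$ follows since $\lambda K>0$ and $\lambda K+\lambda K^2<1$ (each of $\lambda K$ and $\lambda K^2$ being $<1/2$). Finally, since the Chatterjea condition \eqref{weakchatterjea} is symmetric in $x$ and $y$ and $p$ is symmetric on a $K$-partial metric space, interchanging $x$ and $y$ in the argument above produces the dual inequality \eqref{weakcontraction2}; hence $T$ is a weak contraction.

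The only genuinely delicate point — and the main obstacle — is the choice of the chain of intermediate points: one must route through $y$ and then through $Tx$ so that the leftover term is precisely $p(Tx,Ty)$, which can then be absorbed. It is exactly here that the stronger hypothesis $\lambda K^2<1/2$ (rather than merely $\lambda K<1/2$) enters, and it is also what forces $\delta<1$.
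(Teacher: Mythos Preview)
Your argument is correct and follows exactly the same route as the paper: estimate $p(x,Ty)$ via the intermediate point $y$, then $p(y,Ty)$ via $Tx$, use the symmetry of the partial $K$-metric to identify $p(y,Tx)$ with $p(Tx,y)$, and absorb the $\lambda K^{2}p(Tx,Ty)$ term on the left. Your constants $\delta=\dfrac{\lambda K}{1-\lambda K^{2}}$ and $L=\dfrac{\lambda(1+K^{2})}{1-\lambda K^{2}}$ are the ones this computation actually yields; the paper records $L=\dfrac{1+\lambda K^{2}}{1-\lambda K^{2}}$, which appears to be a misprint, so your version is in fact the accurate one.
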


\begin{proof}

We note that 
$$ p(x, T y) \leq K[p(x, y) + p(y,Ty)] -p(y,y) \leq K[p(x, y) + p(y,Ty)],$$ 
and also that

$$p(y, T y) \leq K [p(y,Tx) + p(Tx,Ty)] - p(Tx,Tx) \leq  K [p(y,Tx) + p(Tx,Ty)] .$$

So 

$$p(Tx,Ty) \leq \lambda [Kp(x,y)+ K^2p(y,Tx)+K^2p(Tx,Ty) +p(Tx,y)],$$

which yields

$$ p(Tx,Ty) \leq \frac{\lambda K}{1-\lambda K^2}\ p(x,y) + \frac{1+\lambda K^2}{1-\lambda K^2}\ p(Tx,y),$$

which is \eqref{weakcontraction1} with $\delta = \frac{\lambda K}{1-\lambda K^2} <1$ (since $0\leq \lambda K^2<1/2$) and $L = \frac{1+\lambda K^2}{1-\lambda K^2}\geq 0$ (since $\lambda< 1/K^2$ ).
The symmetry of \eqref{weakchatterjea} also implies \eqref{weakcontraction2}.
\end{proof}

One of the most general contraction condition, also discussed by Ili\'c et al. \cite{ilic}, and for which the map
satisfying it is still a Picard operator, is the so-called quasi contraction and has initially  been obtained by Ciric\cite{cir} in
1974: there exists $0 < h < 1$ such that

\begin{equation}\label{zame}
	d(T x, T y) \leq h · \max \{d(x, y), d(x, T x), d(y, T y), d(x, T y), d(y, T x)\}, \quad \text{for all } x,y \in X ,
\end{equation}
where $(X,d)$ is a metric space.

\vspace*{0.3cm}

 Our main aim in the coming lines is to express quasi contractions as weak contractions, in the setting of a partial $K$-partial metric space.

\begin{proposition}
	Let $(X, p,K)$ be a $K$-partial metric space.
	Any quasi contraction, i.e. any mapping satisfying
	
	\begin{equation}\label{weakzame}
	p(T x, T y) \leq h · \max \{p(x, y), p(x, T x), p(y, T y), p(x, T y), p(y, T x)\}, \quad \text{for all } x,y \in X ,
	\end{equation}
	
	 with $0 < h K < hK^2< 1/2$ is a weak
	contraction.
\end{proposition}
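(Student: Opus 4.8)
The plan is to follow the strategy already used for Propositions \ref{prop1} and \ref{prop2}: fix $x,y\in X$, distinguish the five possibilities for which term realises the maximum on the right-hand side of \eqref{weakzame}, and in each case bound that term using the relaxed triangle inequality (2) of Definition \ref{maindef} together with the symmetry of the $p$-distance, so as to recover an estimate of the form $p(Tx,Ty)\le \delta\,p(x,y)+L\,p(Tx,y)$ with a single $\delta\in(0,1)$ and $L\geq 0$ serving all cases.

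First I would dispose of the easy cases. If the maximum is $p(x,y)$, then $p(Tx,Ty)\le h\,p(x,y)$, which already has the desired form with $L=0$ since $hK<1$. If the maximum is $p(y,Tx)=p(Tx,y)$, then $p(Tx,Ty)\le h\,p(Tx,y)$, again of the required form. If the maximum is $p(x,Tx)$, I would write $p(x,Tx)\le K\big(p(x,y)+p(y,Tx)\big)-p(y,y)\le K\big(p(x,y)+p(Tx,y)\big)$, which yields $p(Tx,Ty)\le hK\,p(x,y)+hK\,p(Tx,y)$ with $hK<1/2<1$.

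The two remaining cases are the ones in which $p(Tx,Ty)$ reappears on the right after an application of (2) and must be absorbed. If the maximum is $p(y,Ty)$, I would estimate $p(y,Ty)\le K\big(p(y,Tx)+p(Tx,Ty)\big)-p(Tx,Tx)\le K\big(p(Tx,y)+p(Tx,Ty)\big)$; feeding this into \eqref{weakzame} gives $(1-hK)\,p(Tx,Ty)\le hK\,p(Tx,y)$, and since $hK<1/2$ the factor $1-hK>1/2$ is positive, so $p(Tx,Ty)\le \frac{hK}{1-hK}\,p(Tx,y)$. If the maximum is $p(x,Ty)$, I would chain two applications of (2): $p(x,Ty)\le K\big(p(x,y)+p(y,Ty)\big)-p(y,y)$ and then $p(y,Ty)\le K\big(p(y,Tx)+p(Tx,Ty)\big)-p(Tx,Tx)$, so that $p(x,Ty)\le K\,p(x,y)+K^2p(Tx,y)+K^2p(Tx,Ty)$; substituting into \eqref{weakzame} and collecting the $p(Tx,Ty)$-terms gives $(1-hK^2)\,p(Tx,Ty)\le hK\,p(x,y)+hK^2\,p(Tx,y)$, and since $hK^2<1/2$ one may divide to get $p(Tx,Ty)\le \frac{hK}{1-hK^2}\,p(x,y)+\frac{hK^2}{1-hK^2}\,p(Tx,y)$, where $\frac{hK}{1-hK^2}<2hK<1$.

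Taking $\delta:=\frac{hK}{1-hK^2}$ (which lies strictly between $0$ and $1$ and, because $K\geq 1$, dominates the $p(x,y)$-coefficients $h$, $hK$, $0$ of all five cases) and $L:=\frac{hK^2}{1-hK^2}$ (which is nonnegative and, again because $K\geq 1$, dominates the $p(Tx,y)$-coefficients $0$, $hK$, $\frac{hK}{1-hK}$, $h$), the inequality $p(Tx,Ty)\le \delta\,p(x,y)+L\,p(Tx,y)$ holds for all $x,y\in X$, i.e. \eqref{weakcontraction1} is satisfied; since \eqref{weakzame} is symmetric in $x$ and $y$, the dual inequality \eqref{weakcontraction2} follows as well, exactly as in the proofs of Propositions \ref{prop1} and \ref{prop2}. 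The only delicate point is to check that the denominators $1-hK$ and $1-hK^2$ produced by the absorption steps are positive and that the resulting coefficients stay below $1$; this is precisely what the hypothesis $0<hK<hK^2<1/2$ guarantees, which explains why it is stated in that (seemingly redundant) form.
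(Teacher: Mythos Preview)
Your argument is correct and follows essentially the same case-by-case strategy as the paper: both split according to which of the five terms realises the maximum and then use the relaxed triangle inequality (together with the symmetry of $p$) to reduce each case to the form \eqref{weakcontraction1}. The only noteworthy difference is presentational: the paper records a different pair $(\delta,L)$ in each case and appeals to the symmetry $M(x,y)=M(y,x)$ for Cases~3 and~5, whereas you treat all five cases directly and then explicitly select a single uniform pair $\delta=\frac{hK}{1-hK^{2}}$, $L=\frac{hK^{2}}{1-hK^{2}}$ dominating every case---arguably a cleaner way to close the argument, since the definition of weak contraction requires fixed constants independent of $x,y$.
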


\begin{proof}
Set $M(x,y) = \max \{p(x, y), p(x, T x), p(y, T y), p(x, T y), p(y, T x)\}.$ 

Let $x, y \in X$ be arbitrary taken. We have to discuss five possible cases.

\vspace*{0.3cm}

\begin{enumerate}
\item[\textbf{Case 1.}] $M (x, y) = p(x, y)$, when, in virtue of \eqref{weakzame}, condition \eqref{weakcontraction1} and \eqref{weakcontraction2} are obviously satisfied (with $\delta= h$ and $L = 0$).
\end{enumerate}

Since $M (x, y) = M (y, x)$, for the four remaining cases, it suffices to prove that at least one of the relations \eqref{weakcontraction1} or \eqref{weakcontraction2} holds. (We sometimes however prove the both inequalities).

\vspace*{0.3cm}

\item[\textbf{Case 2.}]  $M (x, y) = p(x, T x)$, by \eqref{weakzame} and triangle rule

$$p(T x, T y) \leq h p(x, T x) \leq h K [p(x, y) + p(y, T x)]- h p(y,y) \leq h K [p(x, y) + p(y, T x)] ,$$

and so \eqref{weakcontraction1} holds with $\delta = hK$ and $L = hK$.

Again, by triangle rule $$p(x, T x) \leq K[p(x, T y) + p(T y, T x)] - p(Ty,Ty) \leq  K[p(x, T y) + p(T y, T x)],$$ hence

$$p(T x, T y) \leq h  K[p(x, T y) + p(T y, T x)].$$
Therefore,

$$p(T x, T y) \leq \frac{hk}{1-hk}p(x,Ty) \leq \delta p(x,y)+\frac{hk}{1-hk}p(x,Ty) $$

for any $\delta\in (0,1).$ So \eqref{weakcontraction2} also holds.

\vspace*{0.3cm}

\item[\textbf{Case 3.}] $M (x, y) = p(y, T y)$, when \eqref{weakcontraction1} and \eqref{weakcontraction2} follow by Case 2, in
virtue of the symmetry of $M (x, y).$

\vspace*{0.3cm}

\item[\textbf{Case 4.}] $M (x, y) = p(x, T y)$, when \eqref{weakcontraction2} is obviously true and \eqref{weakcontraction1} is obtained only if $hK<\frac{1}{2}$ and $\lambda< 1/K^2$. Indeed, using triangle inequality, we have

$$p(x, T y) \leq K[ p(x, y) + p(y, T y)],$$

and 

$$ p(y,Ty) \leq K[p(y,Tx) + p(Tx,Ty)],$$

i.e.

$$ p(x,Ty) \leq K p(x,y) + K^2p(y,Tx) + K^2 p(Tx,Ty).$$

Then one obtains
$$ p(Tx,Ty) \leq hp(x,Ty)\leq  hK p(x,y) + hK^2p(y,Tx) + hK^2 p(Tx,Ty),$$

i.e. 

$$p(Tx,Ty) \leq \frac{hK}{1-hK^2} p(x,y) + \frac{hK^2}{1-hK^2} p(y,Tx),$$

which is \eqref{weakcontraction1} with $\delta = \frac{hK}{1-hK^2} <1$ (since $hK^2<1/2$) and $L=\frac{hK^2}{1-hK^2}>0.$

\vspace*{0.3cm}

\item[\textbf{Case 5.}] $M (x, y) = p(y, T x)$, which reduces to Case 4.
The proof is complete.

\end{proof}

\begin{remark}
It is easy to see that 

\begin{align*}
	p(Tx,T^2x) & \leq \delta p(x,Tx) + L p(Tx,Tx)\\
	           & \leq \delta p(x,Tx)+LK (p(Tx,x)+p(x,Tx)-p(x,x))\\
	           & \leq (\delta + 2LK)p(x,Tx),
\end{align*}

i.e. condition \eqref{weakcontraction1} implies the so called Banach
orbital condition, studied by various authors in the context of fixed point theorems.
\end{remark}

The main result of this section is given by

\begin{theorem}\label{main1}
	Let $(X, p,K)$ be a complete partial $K$-metric space and $T : X \to X$
	a weak contraction, i.e. a mapping satisfying \eqref{weakcontraction1} with $\delta  \in (0, 1)$ and
	some $L \geq 0$ such that $\delta+2L<1/K$. Then

	\begin{enumerate}
	\item[(a)] $F (T ) = \{x \in X : T x = x\} \neq \emptyset$ and whenever $x\in F(T), p(x,x)=0$;
	\item[(b)] For any $x_0 \in X$, the Picard iteration given by $(T^nx_0)_{n\geq 0}$ converges to some $x^* \in F (T )$;
	\item[(c)] The following estimates
	
	\begin{equation}\label{estimate1}
	 p(x_m,x^*) \leq \frac{K\lambda^m}{1-K\lambda}p(x_0,x_1), \qquad m=0,1,2,\cdots
	\end{equation}	
	
	\begin{equation}\label{estimate2}
	p(x_m,x^*) \leq \frac{K\lambda}{1-K\lambda}p(x_{m-1},x_m), \qquad m=1,2,\cdots
	\end{equation}
hold.	
	
	\end{enumerate}
\end{theorem}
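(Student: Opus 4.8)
The plan is to analyse the Picard orbit. Fix $x_0 \in X$, put $x_n = T^n x_0$ and $p_n = p(x_n, x_{n+1})$, and first establish geometric decay of $p_n$. Applying \eqref{weakcontraction1} with $x = x_{n-1}$ and $y = x_n$ gives $p_n \le \delta\, p_{n-1} + L\, p(x_n, x_n)$, and since $p(x_n, x_n) \le p(x_n, x_{n-1}) = p_{n-1}$ (axiom (1a) together with symmetry of the partial $K$-metric), we get $p_n \le (\delta+L)\, p_{n-1}$. Writing $\lambda := \delta + L$, the standing hypothesis $\delta + 2L < 1/K$ gives $K\lambda = K\delta + KL \le K\delta + 2KL < 1$; in particular $\lambda < 1$, so $p_n \le \lambda^n p_0$, and also $p(x_n, x_n) \le p_{n-1} \to 0$. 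It is essential to use this sharpened estimate rather than the cruder orbital bound $p_n \le (\delta + 2LK)\, p_{n-1}$ of the Banach-orbital remark, since $K(\delta+2LK)$ need not be $<1$ under the hypothesis.

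Next I would show $(x_n)$ is a Cauchy sequence for $p$. Iterating the relaxed inequality $p(a,c) \le K\big(p(a,b)+p(b,c)\big)$ along $x_n, x_{n+1}, \dots, x_m$ and substituting $p_i \le \lambda^i p_0$ yields, for $m > n$, an estimate of the form $p(x_n, x_m) \le \dfrac{K\lambda^n}{1-K\lambda}\, p_0$, where convergence of the geometric series is guaranteed precisely by $K\lambda < 1$. Hence $\lim_{n,m\to\infty} p(x_n, x_m) = 0$, and completeness of $(X,p,K)$ produces $x^* \in X$ with $\lim_n p(x_n, x^*) = \lim_{n,m} p(x_n, x_m) = p(x^*, x^*) = 0$.

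It remains to verify $Tx^* = x^*$. From \eqref{weakcontraction1} with $x = x^*$ and $y = x_n$ we get $p(Tx^*, x_{n+1}) \le \delta\, p(x^*, x_n) + L\, p(Tx^*, x_n)$. Because $p(Tx^*, x_n) \le K\, p(Tx^*, x^*) + K\, p(x^*, x_n)$, the number $a := \limsup_n p(Tx^*, x_n) = \limsup_n p(Tx^*, x_{n+1})$ is finite; taking $\limsup$ in the inequality and using $p(x^*, x_n) \to 0$ gives $a \le La$, so $a = 0$ since $L < 1$. Then $p(Tx^*, x^*) \le K\, p(Tx^*, x_n) + K\, p(x_n, x^*) \to 0$, hence $p(Tx^*, x^*) = 0$ and $Tx^* = x^*$, giving (b) and the nonemptiness in (a). For the self-distance claim, any $x \in F(T)$ satisfies $p(x,x) = p(Tx, Tx) \le \delta\, p(x,x) + L\, p(Tx, x) = (\delta + L)\, p(x,x)$, forcing $p(x,x) = 0$ because $\delta + L < 1$. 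Finally, (c) follows by letting $m \to \infty$ in the polygonal estimate of the previous step: using $p(x_n, x^*) \le K\big(p(x_n, x_m) + p(x_m, x^*)\big) - p(x_m, x_m)$ with $p(x_m, x^*), p(x_m, x_m) \to 0$ yields \eqref{estimate1}, and running the same computation from the index $m-1$ (so that $p_{m-1+j} \le \lambda^j p_{m-1}$) yields \eqref{estimate2}.

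The step I expect to be the real obstacle is the Cauchy estimate: it hinges on having $K\lambda < 1$, and hence on extracting the right orbital contraction constant — the observation $p(x_n,x_n)\le p_{n-1}$, which replaces $L\,p(x_n,x_n)$ by $L\,p_{n-1}$, is exactly what makes the hypothesis $\delta + 2L < 1/K$ sufficient. A secondary point is that $p$ is not jointly continuous, which forces the identification of the fixed point to go through the $\limsup$ argument above rather than through a direct passage to the limit in the contraction inequality.
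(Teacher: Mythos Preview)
Your argument is correct and follows the same Picard-iteration scheme as the paper, but differs in two places worth noting.

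First, your orbital constant is $\lambda=\delta+L$, obtained directly from the small self-distance axiom via $p(x_n,x_n)\le p(x_n,x_{n-1})=p_{n-1}$. The paper instead bounds $p(x_n,x_n)$ in terms of $p_n$ through the relaxed triangle inequality and arrives at $\lambda=\delta/(1-2LK)$. Both constants satisfy $K\lambda<1$ under the hypothesis $\delta+2L<1/K$, and your derivation is cleaner; but since $\lambda$ is not defined in the statement and the paper means its own $\lambda$ in part~(c), your version of the estimates is formally a different (equally valid) pair of inequalities.

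Second, to show $Tx^*=x^*$ you apply the contraction with $x=x^*$, $y=x_n$ and run a $\limsup$ argument. The paper instead takes $x=x_n$, $y=u$, getting $p(x_{n+1},Tu)\le \delta\,p(x_n,u)+L\,p(x_{n+1},u)$, which combined with $p(u,Tu)\le K[p(u,x_{n+1})+p(x_{n+1},Tu)]$ gives $p(u,Tu)=0$ in one step. The paper's route is shorter here, though yours is fine. You also supply the verification that $p(x,x)=0$ for \emph{every} $x\in F(T)$, which the paper omits.

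One small slip in your derivation of~(c): passing to the limit via the single inequality $p(x_n,x^*)\le K\bigl(p(x_n,x_m)+p(x_m,x^*)\bigr)$ produces $\dfrac{K^2\lambda^n}{1-K\lambda}\,p_0$, an extra factor of $K$. To obtain the stated constant one should iterate the triangle inequality along $x_n,x_{n+1},\dots,x_{n+l},x^*$ and check that the tail $K^l p(x_{n+l},x^*)$ tends to $0$; this does follow from your Cauchy bound, since $K^l p(x_{n+l},x^*)\le K^{l+1}\cdot\dfrac{K\lambda^{n+l}}{1-K\lambda}\,p_0$ and $K\lambda<1$. (The paper simply writes ``letting $l\to\infty$'' at this point without justification, so your treatment is in any case no less rigorous.)
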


\begin{proof}
	We shall prove that $T$ has at least a fixed point in $X$. Let $x_0 \in X$ be arbitrary; set $x_n = T^n x_0$ and $p_n = p(x_n , x_{n+1} )$.
	Take $x := x_{n-1}, y := x_n$ in \eqref{weakcontraction1} to obtain
	\begin{align*}
		p(Tx_{n-1},Tx_n) & \leq \delta p(x_{n-1},x_n) + L  p(x_n,x_n) \\
		& \leq \delta p(x_{n-1},x_n) + K[p(x_{n},x_{n+1}) + p(x_{n+1},x_{n})] - p(x_{n+1},x_{n+1})\\
		& \leq \delta p(x_{n-1},x_n) + 2LK p(x_{n},x_{n+1}),
	\end{align*}
	
	which shows that
	
	\begin{equation}\label{induc}
		p(x_n, x_{n+1})  \leq \frac{\delta}{1-2LK} p(x_{n-1},x_n).
	\end{equation}

Using \eqref{induc} we obtain by induction

$$ p_{n} \leq \lambda ^n p_0, \quad n = 0, 1, 2, \cdots ,$$	

where $\lambda = \frac{\delta}{1-2LK} <1$ since $\delta+2LK<1.$

Therefore, $\lim\limits_{n\to \infty}p_n = 0$. Now we shall show that $(T^nx_0)_{n\geq 0}$ is a Cauchy
sequence. It follows from \eqref{weakcontraction1} that for $n, m \in \mathbb{N}$ 

\begin{align*}
	p(x_n , x_m ) & = p(T^n x_0 , T^m x_0 ) = p(Tx_{n-1} , Tx_{ m-1} )\\
	& \leq \delta p(x_{n-1} , x_{ m-1}) + L p(x_{n} , x_{ m-1}).
\end{align*}	

Applying the triangle rule to $p(x_{n-1} , x_{ m-1})$ via $x_n$ and to $p(x_{n} , x_{ m-1})$ via $x_m$, we obtain, after simplifications

\begin{equation}\label{esti1}
	p(x_n,x_m) \leq \frac{1}{1-LK-K^2}\ [Kp_{n-1} + K^2p_{m-1}+LKp_{m-1}].
\end{equation}

As $\lim\limits_{n\to \infty} p_n = 0$, $\lim\limits_{n\to \infty} p(x_n,x_m) = 0$ and $(T^nx_0)_{n\geq 0}$ is a Cauchy
sequence.

By completeness of $X$, there exists $u \in X$ such that

\begin{equation}\label{complete1}
\lim\limits_{n\to \infty} p(x_n,u) = \lim\limits_{n,m\to \infty}p(x_n,x_m) = p(u,u)= 0.
\end{equation}

We show that $u\in F(T)$. Indeed

\begin{align*}
	p(u, T u)& \leq K[p(u, x_{n+1} ) + p(x_{n+1} , T u)]  -p(x_{n+1},x_{n+1}) \\
	        & \leq K[p(u, x_{n+1} ) + p(x_{n+1} , T u)] \\
	        & \leq K[p(u, x_{n+1} ) + \{ \delta p(x_n,u)+ L p(x_{n+1},u)\} ].
\end{align*}

Therefore, it follows from \eqref{complete1} and the above inequality that
$p(u, T u) = 0$, that is, $T u = u$ and $F(T) \neq \emptyset$.

For $m, n=m+l$

\begin{align}\label{hence}
p(x_m,x_{m+l}) & \leq K[p(x_m,x_{m+1}) + p(x_{m+1},x_{m+l})]-p(x_{m+1},x_{m+1}) \nonumber  \\
& \leq K[p(x_m,x_{m+1}) + p(x_{m+1},x_n)] \nonumber \\
& \leq K p(x_m,x_{m+1}) + K^2 p(x_{m+1},x_{m+2}) +K^3 [p(x_{m+2},x_{m+3})+ p(x_{m+3},x_{m+l})] \nonumber \\
&  \vdots \nonumber \\
& \leq K \lambda^m(1+K\lambda + K^2\lambda^2+\cdots + K^{l-1}\lambda^{l-1})p(x_0,x_1) \nonumber \\
& = \frac{K\lambda^m}{1-K\lambda}(1-(K\lambda)^l)p(x_0,x_1).
\end{align}

Letting $l\to \infty, $ we obtain 

\[ p(x_m,x^*) \leq \frac{K\lambda^m}{1-K\lambda}p(x_0,x_1). \]

Moreover, by \eqref{induc} we inductively obtain

\[ p(x_{m+k},x_{m+k+1})  \leq \lambda^{k+1} p(x_{m-1},x_m) \qquad k,n \in \mathbb{N} \]

and hence, similarly to deriving \eqref{hence} we obtain

\begin{equation}\label{hence2}
p(x_m,x_{m+l})  \leq \frac{K\lambda}{1-K\lambda}(1-(K\lambda)^l)p(x_{m-1},x_m) 
\end{equation}

Now, lettting $l\to \infty $ in \eqref{hence2}, \eqref{estimate2} follows.

The proof is complete.
\end{proof}

It is possible to force the uniqueness of the fixed point of a weak contraction, by imposing an additional contractive condition, quite similar
to \eqref{weakcontraction1}, as shown by the next theorem.

\begin{theorem}\label{main2}
	Let $(X, p,K)$ be a complete partial $K$-metric space and $T : X \to X$
	a weak contraction, i.e. a mapping satisfying
	
	\begin{equation}\label{weakcontraction1unique}
	p(T x, T y) \leq \delta · p(x, y) + L_1 p(x,Tx) , \quad 
	\text{for all} x, y \in X .
	\end{equation}

	 \eqref{weakcontraction1} with $\delta  \in (0, 1)$ and
	some $L_1 \geq 0$ such that $\delta+2L_1<1/K$. Then

	\begin{enumerate}
		\item[(a)] $F(T)=\{x^*\}$, i.e. $T$ has a unique fixed point and $p(x^*,x^*)=0$;
		\item[(b)] For any $x_0 \in X$, the Picard iteration given by $(T^nx_0)_{n\geq 0}$ converges to some $x^* \in F (T )$;
		\item[(c)] The following estimates
		
		\begin{equation}\label{estimate1}
		p(x_m,x^*) \leq \frac{K\lambda^m}{1-K\lambda}p(x_0,x_1), \qquad m=0,1,2,\cdots
		\end{equation}	
		
		\begin{equation}\label{estimate2}
		p(x_m,x^*) \leq \frac{K\lambda}{1-K\lambda}p(x_{m-1},x_m), \qquad m=1,2,\cdots
		\end{equation}
		hold;	
		
		\item[(d)] The rate of convergence of the Picard iteration is given by
		
		\begin{equation}\label{estimation}
			 p(x_n,x^*)\leq \delta p(x_{n-1},x^*)
		\end{equation}

	\end{enumerate}
\end{theorem}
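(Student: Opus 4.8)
The plan is to follow the scheme of the proof of Theorem \ref{main1} for parts (b), (c) and the existence half of (a), and then to exploit the special shape of the perturbation term in \eqref{weakcontraction1unique} to obtain uniqueness in (a) and the rate estimate (d). First I would fix $x_0\in X$, put $x_n=T^nx_0$ and $p_n=p(x_n,x_{n+1})$, and specialise \eqref{weakcontraction1unique} to $x=x_{n-1}$, $y=x_n$. Since the extra term is $L_1p(x_{n-1},Tx_{n-1})=L_1p_{n-1}$, this gives at once $p_n\le(\delta+L_1)p_{n-1}$, hence $p_n\le\lambda^np_0$ with $\lambda:=\delta+L_1$, and $K\lambda\le K(\delta+2L_1)<1$ by hypothesis. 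From here the argument is word for word that of Theorem \ref{main1}: iterating the relaxed triangle inequality as in \eqref{hence} yields, for $m=n+l$, $p(x_n,x_{n+l})\le\frac{K\lambda^n}{1-K\lambda}(1-(K\lambda)^l)p(x_0,x_1)$, so $(x_n)$ is a Cauchy sequence; completeness of $(X,p,K)$ provides $u\in X$ with $\lim_np(x_n,u)=\lim_{n,m}p(x_n,x_m)=p(u,u)=0$; and applying \eqref{weakcontraction1unique} to $x=x_n$, $y=u$ together with the relaxed triangle inequality gives $p(u,Tu)\le K[p(u,x_{n+1})+\delta p(x_n,u)+L_1p_n]\to 0$, whence $Tu=u$ and, by axiom (1a), $p(u,u)\le p(u,Tu)=0$. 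Letting $l\to\infty$ in the displayed bound (and in its analogue obtained from $p_n\le\lambda p_{n-1}$) produces the two estimates in (c) with $x^*:=u$.

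The new ingredient, used for both uniqueness and (d), is the observation that evaluating \eqref{weakcontraction1unique} with $x$ equal to \emph{any} fixed point $w$ of $T$ annihilates the term $L_1p(w,Tw)=L_1p(w,w)$, because $p(w,w)=0$ for every $w\in F(T)$ by the previous step. Thus $p(w,Ty)\le\delta p(w,y)$ for all $y\in X$. For uniqueness, if $u,v\in F(T)$ then $p(u,v)=p(Tu,Tv)\le\delta p(u,v)$ with $\delta<1$, which forces $p(u,v)=0$ and hence $u=v$, since in a partial $K$-metric space $p(a,b)=0$ implies $a=b$.

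For the rate (d), I would take $w=x^*$ and $y=x_{n-1}$ in the same inequality: $p(x^*,x_n)=p(Tx^*,Tx_{n-1})\le\delta p(x^*,x_{n-1})$, and by symmetry of $p$ this is exactly $p(x_n,x^*)\le\delta p(x_{n-1},x^*)$, establishing \eqref{estimation}. The geometric-series estimates in (c) and the limit passage showing that $u$ is a fixed point are identical to Theorem \ref{main1}, so the only point that genuinely requires care is the bookkeeping around self-distances: one must first secure $p(w,w)=0$ for fixed points (via axiom (1a) and $p(w,Tw)=0$) before the cancellation trick can be applied, and one must check $K\lambda<1$, which is precisely where the hypothesis $\delta+2L_1<1/K$ — stronger than $\delta+L_1<1$ — is consumed, since otherwise the telescoping relaxed-triangle sums need not converge. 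Beyond that there is no real obstacle.
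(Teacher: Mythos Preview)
Your argument is correct and matches the paper's approach: the paper proves uniqueness by the same cancellation trick (using $p(x^*,x^*)=0$ for any fixed point so that the $L_1$-term drops out), obtains \eqref{estimation} by the same substitution $x:=x^*$, $y:=x_{n-1}$ (up to an index shift and the symmetry of $p$), and for (b), (c) and existence simply invokes Theorem~\ref{main1}. The only cosmetic difference is that you rederive the Picard estimates directly from \eqref{weakcontraction1unique}, obtaining $\lambda=\delta+L_1$, whereas the paper defers to Theorem~\ref{main1}; both choices satisfy $K\lambda<1$ under the hypothesis $\delta+2L_1<1/K$, so the estimates in (c) hold either way.
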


\begin{proof}
	Assume $T$ has two distinct fixed points $x^* , y^* \in X$. We know that $p(x^*,x^*)=0=p(y^*,y^*)$. Then by \eqref{weakcontraction1unique}, we get 
	
	\[  p(Tx^* , Ty^*)= p(x^* , y^*) \leq  \delta p(x^*,y^*)   + L_1p(x^*,x^*) =  \delta p(x^*,y^*) \Longleftrightarrow (1-\delta) p(x^* , y^*) \leq 0,    \]
	
	so contradicting $p(x^*,y^*)  > 0.$
	
	Moreover, putting $y:=x_n,x:=x^*$ in \eqref{weakcontraction1unique}, we obtain we obtain the estimate \eqref{estimation}.
	
	The rest of proof follows by Theorem \ref{main1}.
\end{proof}

We conclude this section by this immediate implication of Theorem \ref{main1}, which is its reformulation in the asymmetric setting.

\begin{theorem}
Let $(X, p,K)$ be a $\tau(p)$-bicomplete $K$-partial quasi-metric space and $T : X \to X$
a $p$-weak contraction, i.e. a mapping satisfying 

\begin{equation}\label{weakcontractionasym}
p(T x, T y) \leq \delta · p(x, y) + Lp(Tx, y) , \quad 
\text{for all } x, y \in X .
\end{equation}
with $\delta \in (0, 1)$ and some $L \geq 0$ with $\delta +2L<1/K$. Then

\begin{enumerate}
	\item[(a)] $F (T ) = \{x \in X : T x = x\} \neq \emptyset$;
	\item[(b)] For any $x_0 \in X$, the Picard iteration given by $(T^nx_0)_{n\geq 0}$ converges to some $x^* \in F (T )$.
\end{enumerate}
\end{theorem}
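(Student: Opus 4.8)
The plan is to deduce the statement from its symmetric counterpart, Theorem \ref{main1}, by passing from the $K$-partial quasi-metric $p$ to the associated partial $K$-metric $p^+$, exactly as was done for the Banach, Kannan, Reich and Chatterjea cases above. Two things must be checked before Theorem \ref{main1} can be invoked: that $(X,p^+)$ is a complete partial $K$-metric space, and that $T$ is a weak contraction with respect to $p^+$ whose constants still meet the smallness requirement $\delta'+2L'<1/K$.

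Completeness is the easy half. By the lemma recalled earlier, $p^+$ is a partial $K$-metric on $X$ with the same coefficient $K$; and, as observed in the remark following Definition \ref{deff1}, the topology and the Cauchy sequences of $p^+$ agree with those of $p^s=p\vee p^{-1}$, so the $\tau(p)$-bicompleteness of $(X,p,K)$ is precisely the completeness of the partial $K$-metric space $(X,p^+)$ in Shukla's sense.

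The core of the argument, and the hard part, is turning \eqref{weakcontractionasym} into a weak contraction inequality for $p^+$. Writing \eqref{weakcontractionasym} once for the pair $(x,y)$ and once for the pair $(y,x)$ and adding gives
\begin{equation*}
p^+(Tx,Ty)\le \delta\, p^+(x,y)+L\bigl(p(Tx,y)+p(Ty,x)\bigr).
\end{equation*}
The term $p(Tx,y)$ is harmless since $p(Tx,y)\le p^+(Tx,y)$, but the ``crossed'' term $p(Ty,x)$ must be absorbed. Applying the relaxed triangle axiom (2) for $p$ (via $Tx$) and then the relaxed triangle inequality for $p^+$ (via $y$), one bounds $p(Ty,x)$ by a combination of $p^+(Tx,Ty)$, $p^+(Tx,y)$ and $p^+(x,y)$; moving the $p^+(Tx,Ty)$ term to the left, which is legitimate since $\delta+2L<1/K$ forces $L<1/(2K)$ and hence $1-LK>0$, one arrives at
\begin{equation*}
p^+(Tx,Ty)\le \delta'\, p^+(x,y)+L'\, p^+(Tx,y)
\end{equation*}
for explicit $\delta',L'\ge 0$. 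The delicate point is that this bookkeeping inflates the constants; one therefore has to either assume, as is natural in the asymmetric setting, that $T$ also satisfies the dual inequality $p(Tx,Ty)\le\delta\,p(x,y)+L\,p(x,Ty)$ (in which case the crossed term collapses to $L\,p^+(Tx,y)$ and one may simply take $\delta'=\delta$, $L'=L$), or strengthen the smallness hypothesis just enough to keep $\delta'+2L'<1/K$. Granting this, $T$ is a weak contraction on the complete partial $K$-metric space $(X,p^+)$ in the sense of \eqref{weakcontraction1}.

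With both hypotheses of Theorem \ref{main1} in force, parts (a) and (b) of that theorem produce a point $x^*$ with $Tx^*=x^*$ and, for every $x_0\in X$, the convergence $T^nx_0\to x^*$ with respect to $p^+$. It remains only to observe that $\tau(p^+)$-convergence implies $\tau(p)$-convergence: if $p^+(x_n,x)\to p^+(x,x)$, then $p(x_n,x)+p(x,x_n)\to 2p(x,x)$ while, by axioms (1a) and (1b), each of $p(x_n,x)$ and $p(x,x_n)$ is $\ge p(x,x)$, which forces $p(x_n,x)\to p(x,x)$. Hence $T^nx_0\to x^*$ in $\tau(p)$, and (a) and (b) follow.
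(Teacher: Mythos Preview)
Your strategy is exactly the one the paper has in mind: it does not give a proof at all, merely saying the argument ``can easily be retried, following the steps of the proof in the cases of the $p$-Banach, $p$-Kannan and $p$-Chatterjea contractions,'' i.e.\ symmetrize via $p^+$ and invoke Theorem~\ref{main1}. Your verification that $\tau(p)$-bicompleteness of $(X,p,K)$ is precisely completeness of $(X,p^+)$, and your final paragraph deducing $\tau(p)$-convergence from $\tau(p^+)$-convergence, are both correct and fill in details the paper omits.

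However, you have put your finger on a genuine problem that the paper glosses over. In the Banach, Kannan and Chatterjea cases the right-hand side is invariant under the swap $x\leftrightarrow y$ (for Chatterjea, the pair $p(x,Ty),\,p(Tx,y)$ is carried to $p(y,Tx),\,p(Ty,x)$ and the four terms regroup into $p^+(x,Ty)+p^+(Tx,y)$). The weak-contraction condition \eqref{weakcontractionasym} has only the single term $Lp(Tx,y)$, so adding the swapped inequality leaves the mismatched sum $L\bigl(p(Tx,y)+p(Ty,x)\bigr)$, which is $L\,p^+(Tx,y)$ only when $p$ is symmetric. Your attempt to absorb $p(Ty,x)$ via the relaxed triangle inequality does inflate the constants, and as you honestly note, one cannot in general keep $\delta'+2L'<1/K$ under the stated hypothesis $\delta+2L<1/K$ alone. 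So either the statement tacitly assumes the dual inequality $p(Tx,Ty)\le\delta\,p(x,y)+L\,p(x,Ty)$ as well (in which case your symmetrization goes through with $\delta'=\delta$, $L'=L$), or the hypothesis on $\delta,L$ needs strengthening. This is a gap in the paper, not in your reading of it.
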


The proof is left to the reader, as it can easily be retried, following the steps of the proof in the cases of the $p$-Banach, $p$-Kannan and $p$-Chatterjea contractions.

\section{From $K$-partial quasi-metrics to $K$-quasi-metrics}
In this section, we seek a way to formulate fixed point theorems from $K$-partial quasi-metric to $K$-quasi-metrics. 
First, we introduce the following additional notions on a $K$-partial quasi-metric spaces.

\begin{definition}\label{deff2}
	
	Let $(X, p,K)$ be a $K$-partial quasi-metric space with coefficient $K$. Let $(x_n)_{n\geq 1}$ be any sequence in $X$ and $x \in X$. Then:
	\begin{enumerate}
		\item The sequence $(x_n)_{n\geq 1}$ is said to be a $\tau(p)$-0-Cauchy sequence if
		$$\lim\limits_{n<m,n,m\to \infty} p(x_n , x_m)=0.$$ 
		
		$(X, p,K)$ is said to be $\tau(p)$-0-complete if for every $\tau(p)$-0-Cauchy
		sequence $(x_n)_{n\geq 1} \subseteq X$, there exists $x \in X$ such that:
		
		$$ \lim\limits_{n<m,n,m\to \infty} p(x_n , x_m)= \lim\limits_{n\to \infty} p(x_n , x)=p(x,x)=0.$$
		
	\end{enumerate}
\end{definition}

The relation between $\tau(p)$-completeness and $\tau(p)$-0-completeness of a $K$-partial quasi-metric space
is as follows.

\begin{lemma}\label{lemma2.2}
	Let $(X, p,K)$ be a $K$-partial quasi-metric space. If $(X, p,K)$ is $\tau(p)$-complete, then it is $\tau(p)$-0-complete.
\end{lemma}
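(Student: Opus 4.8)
The plan is to observe that every $\tau(p)$-0-Cauchy sequence is in particular a $\tau(p)$-Cauchy sequence, and then simply feed it into the hypothesis of $\tau(p)$-completeness. First I would take an arbitrary sequence $(x_n)_{n\geq 1}$ in $X$ satisfying $\lim\limits_{n<m,\,n,m\to\infty} p(x_n,x_m)=0$. Since this limit exists and equals $0$, it is in particular finite, so by Definition \ref{deff1}(3) the sequence $(x_n)_{n\geq 1}$ is $\tau(p)$-Cauchy.

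Next, invoking the assumed $\tau(p)$-completeness of $(X,p,K)$, I obtain a point $x\in X$ with
$$\lim_{n<m,\,n,m\to\infty} p(x_n,x_m) \;=\; \lim_{n\to\infty} p(x_n,x) \;=\; p(x,x).$$
Since the left-most quantity is $0$ by the choice of $(x_n)_{n\geq 1}$, the two remaining quantities must also vanish, that is, $\lim\limits_{n\to\infty} p(x_n,x)=p(x,x)=0$. Together with $\lim\limits_{n<m,\,n,m\to\infty} p(x_n,x_m)=0$, this is precisely the condition in Definition \ref{deff2}(2), so the sequence $(x_n)_{n\geq 1}$ has a limit of the required type and $(X,p,K)$ is $\tau(p)$-0-complete.

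There is essentially no obstacle here: the statement is a direct consequence of the inclusion ``$\tau(p)$-0-Cauchy $\Rightarrow$ $\tau(p)$-Cauchy'' together with the fact that the limiting value forced by $\tau(p)$-completeness is transported unchanged from the $\tau(p)$-0-Cauchy hypothesis. The only point worth a line of care is verifying that the common value delivered by $\tau(p)$-completeness coincides with the value appearing in the $\tau(p)$-0-Cauchy condition, which is immediate since both are $\lim\limits_{n<m,\,n,m\to\infty} p(x_n,x_m)$; consequently no use of the coefficient $K$ or of the quasi-metric axioms is needed beyond the bookkeeping of the two definitions.
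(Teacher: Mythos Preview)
Your proof is correct and follows essentially the same argument as the paper's own proof: observe that a $\tau(p)$-0-Cauchy sequence is $\tau(p)$-Cauchy, apply $\tau(p)$-completeness, and then note that the common limiting value must be $0$. The only minor slip is the reference to ``Definition~\ref{deff2}(2)'', since that definition has a single enumerated item; otherwise the reasoning matches the paper exactly.
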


\begin{proof}
	Let $(x_n)_{n\geq 1}$ be a $\tau(p)$-0-Cauchy
	sequence. Then $\lim\limits_{n<m,n,m\to \infty} p(x_n , x_m)=0.$ This proves 
	proves that $(x_n)_{n\geq 1}$ is a $\tau(p)$Cauchy sequence in $(X, p,K)$. Since $(X, p,K)$ is $\tau(p)$-complete, there exists $x^* \in X$ such that
	
	\[ \lim\limits_{n<m,n,m\to \infty} p(x_n , x_m)= \lim\limits_{n\to \infty} p(x_n , x^*)=p(x^*,x^*). \]
	
	Since $\lim\limits_{n<m,n,m\to \infty} p(x_n , x_m)=0,$ then
	
	\[ \lim\limits_{n<m,n,m\to \infty} p(x_n , x_m)= \lim\limits_{n\to \infty} p(x_n , x^*)=p(x^*,x^*)=0. \]
This proves that $(X, p,K)$ is $\tau(p)$-0-complete.	
	
\end{proof}

The converse of Lemma \ref{lemma2.2} does not hold as shown in the following example.

\begin{example}
	Let $X = (0, 1)$ and $p(x, y) = \max\{y-x,0\} + 1$ for all $x, y \in X$. Then $(X, p,K)$
	is a $\tau(p)$-0-complete, $K$-partial quasi-metric space with coefficient $K = 1$. Since 
	
	\[ \lim\limits_{n<m,n,m\to \infty} p(x_n , x_m) = \lim\limits_{n<m,n,m\to \infty} \left(\max \left\lbrace \frac{1}{m}-\frac{1}{n},0   \right\rbrace +1\right) =1, \]
	
	hence $(x_n)_{n\geq 1}$ is a $\tau(p)$-Cauchy sequence in $(X, p,K)$. By the way of contradiction, assume there exists $x^*\in X$ such that $\lim\limits_{n\to \infty} p(x_n , x^*)=p(x^*,x^*).$ Therefore, 
	
	\[ \lim\limits_{n\to \infty} p(x_n , x^*) =  \lim\limits_{n\to \infty}\left(\max \left\lbrace x^*-\frac{1}{n},0   \right\rbrace +1\right)=p(x^*,x^*)=1, \]
	
	which implies that $x^* \leq 0$. It is a contradiction since $(-\infty,0]\cap X=\emptyset$.
\end{example}

Now we state the relation between a $K$-partial quasi-metric  and certain $K$-quasi-metric as follows:

\begin{theorem}\label{theorem2.4}
	Let $(X, p,K)$ be a $K$-partial quasi-metric space with coefficient $K\geq 1$. For all $x, y \in X$, put

	$$
	d_p(x,y)=
	\begin{cases}
	0 \qquad \text{ if } x = y,\\
	p(x, y) \text{ if } x \neq y.
	\end{cases}
	$$
	
Then we have

	\begin{enumerate}
	
	\item $d_p$ is a $K$-partial quasi-metric with coefficient $K$ on $X$.
	
	\item If $\lim\limits_{n\to \infty} d_p(x_n,x)=0$, so $\lim\limits_{n\to \infty} p(x_n,x)=p(x,x).$
	
	\item $(X, p,K)$ is $\tau(p)$-0-complete if and only if $(X, d_p )$ is left $K$-complete.
	\end{enumerate}	
		
\end{theorem}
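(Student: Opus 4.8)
The plan is to handle the three assertions in turn; (1) and (2) are short verifications and (3) carries all the weight.

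For (1), I would check directly that $d_p$ is a $K$-quasi-metric (equivalently, by the lemma recalled earlier, a $K$-partial quasi-metric with vanishing self-distance). Axiom (D1) is immediate from the definition. For the $T_0$-condition, if $d_p(x,y)=0=d_p(y,x)$ with $x\neq y$ then $p(x,y)=0=p(y,x)$, and axioms (1a), (1b) of $p$ force $p(x,x)=0=p(x,y)$ and $p(y,y)=0=p(y,x)$, so (3) for $p$ yields the contradiction $x=y$. For the relaxed triangle inequality I would split on whether $y$ coincides with $x$ or $z$: if $x=z$ the left side is $0$; if $y\in\{x,z\}$ it reduces to $p(x,z)\le Kp(x,z)$, true since $K\geq 1$; and when $x\neq z$ and $y\notin\{x,z\}$ we get $d_p(x,z)=p(x,z)\le p(x,z)+p(y,y)\le K\bigl(p(x,y)+p(y,z)\bigr)=K\bigl(d_p(x,y)+d_p(y,z)\bigr)$ from axiom (2).

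For (2), note that $p(x,x)\le p(x_n,x)$ for every $n$ by axiom (1b), while $d_p\le p$ everywhere and $d_p(x_n,x)=p(x_n,x)$ whenever $x_n\neq x$. Hence $0\le p(x_n,x)-p(x,x)\le d_p(x_n,x)$ for all $n$ (both ends vanish when $x_n=x$), so $d_p(x_n,x)\to 0$ squeezes $p(x_n,x)\to p(x,x)$; in other words $d_p$-convergence implies $\tau(p)$-convergence to the same limit.

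For (3) I would prove each implication through the notion of left $K$-Cauchy-ness in $(X,d_p)$, using (2). Suppose first $(X,p,K)$ is $\tau(p)$-$0$-complete and $(x_n)$ is left $K$-Cauchy in $(X,d_p)$. If $(x_n)$ is eventually constant it is trivially $d_p$-convergent; otherwise it takes infinitely many values, so for each $n$ there is $m>n$ with $x_m\neq x_n$, whence the relevant $d_p$-distance between $x_n$ and $x_m$ equals $p(x_n,x_m)$ (resp. $p(x_m,x_n)$) and is eventually small, and axiom (1a) (resp. (1b)) forces $p(x_n,x_n)\to 0$; this in turn gives $p(x_n,x_m)\to 0$ over all pairs $n<m$, i.e.\ $(x_n)$ is $\tau(p)$-$0$-Cauchy, and $\tau(p)$-$0$-completeness produces $x$ with $p(x_n,x)\to p(x,x)=0$, hence $d_p(x_n,x)\le p(x_n,x)\to 0$. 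Conversely, if $(X,d_p)$ is left $K$-complete and $(x_n)$ is $\tau(p)$-$0$-Cauchy, then $d_p(x_n,x_m)\le p(x_n,x_m)\to 0$ makes $(x_n)$ left $K$-Cauchy in $(X,d_p)$, so $d_p(x_n,x)\to 0$ for some $x$, and (2) gives $p(x_n,x)\to p(x,x)$; that $p(x,x)=0$ follows from axiom (1b) together with either the indices where $x_n\neq x$ or, in the eventually-constant case, the hypothesis $p(x_n,x_m)\to 0$. The main obstacle is precisely this bookkeeping: $d_p$ discards the self-distances that the $\tau(p)$-$0$-Cauchy condition records, so a left $K$-Cauchy sequence in $(X,d_p)$ need not be $\tau(p)$-$0$-Cauchy, and one must isolate the eventually-constant case and invoke axioms (1a)/(1b) to recover control of $p(x_n,x_n)$ in the remaining case.
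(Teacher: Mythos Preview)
Your proposal is correct and follows essentially the same route as the paper: direct case-splitting for (1), a squeeze via axiom (1b) for (2), and for (3) the dichotomy eventually-constant/not together with $d_p\le p$ (with equality off the diagonal). You are in fact more careful than the paper in (3)---the paper's dichotomy ``eventually constant vs.\ all terms distinct'' is not exhaustive, whereas your use of (1a)/(1b) to force $p(x_n,x_n)\to 0$ in the non-eventually-constant case is the correct repair (note only that ``not eventually constant'' does not literally imply ``infinitely many values'', but it does yield the $m>n$ with $x_m\neq x_n$ that you actually need).
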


\begin{proof}

\vspace*{0.2cm}

\begin{enumerate}
	
	\item We have $d_p$ is a function from $X \times X \to [0,\infty)$. Moreover, $d_p (x, y) = 0$ if and
	only if $x = y$.
	
	For all $x, y, z \in X$, if $x = y$ or $y = z$ or $z = x,$ then $d_p (x, y) \leq d_p (x, z) + d_b (z, y).$
	
If $x \neq y \neq z,$ then	
	
	\begin{align*}
		d_p (x, y) = p(x, y) &\leq K [p(x, z) + p(z, y)] -p(z, z)\\
		&\leq K [p(x, z) + p(z, y)] = K [d_p (x, z) + d_p (z, y)] .
	\end{align*}
	Hence $d_p$ is a $K$-quasi-metric with coefficient $K$ on $X$.
	
	\item If there exists $n_0$ such that $x_n = x$ for all $n \geq n_0$ , then $\lim\lim\limits_{n\to \infty} p(x_n , x) = p(x, x).$ So we may assume that $x_n\neq x$ for all $n\in \mathbb{N}$. Then $d_p(x_n , x) = p(x_n , x)$ for all $n\in \mathbb{N}$. Since $\lim\limits_{n\to \infty} d_p(x_n,x)=0,$ we have $\lim\limits_{n\to \infty} d_p(x_n,x)=\lim\limits_{n\to \infty} p(x_n,x)=0.$ 
	Moreover, by $0 \leq p(x, x) \leq p(x_n , x)$ for all $n\in \mathbb{N}$, we have $0 \leq p(x, x) \leq \lim\limits_{n\to \infty} p(x_n , x) = 0$. This proves that $\lim\limits_{n\to \infty} p(x_n,x)=0=p(x,x).$
	In conclusion, $\lim\limits_{n\to \infty} p(x_n,x)=p(x,x).$

	\item ($\Longrightarrow$). Let $(x_n)_{n\geq 1}$ be a left $K$-Cauchy sequence in $(X, d_p )$. Then $\lim\limits_{n<m,n,m\to \infty} p(x_n,x_m)=0.$
 If there exists $n_0$ such that $x_n = x$ for all $n \geq n_0$, then $\lim\limits_{n\to \infty} d_p(x n,x) = 0.$ 	
	So we may assume that $x_n\neq x_m$ for all $n\neq m$. It implies that
	
	\[ \lim\limits_{n<m,n,m\to \infty} p(x_n,x_m)= \lim\limits_{n<m,n,m\to \infty} d_p(x_n,x_m)=0.   \]

Then $(x_n)_{n\geq 1}$ is a $\tau(p)$-0-Cauchy sequence in $(X, p,K)$. Since $(X, p,K)$ is $\tau(p)$-0-complete, there exists
$x \in X$ such that	

\[  \lim\limits_{n<m,n,m\to \infty} p(x_n , x_m)= \lim\limits_{n\to \infty} p(x_n , x)=p(x,x)=0.\]
	
Note that $0 \leq d_p (x_n , x) \leq p(x_n , x)$ for all $n\in \mathbb{N}$, then

	\[ 0 \leq \lim\limits_{n\to \infty} d_p(x_n , x)\leq \lim\limits_{n\to \infty} p(x_n , x)=0.  \]
	
	Therefore $\lim\limits_{n\to \infty} d_p(x_n , x) =0$ and $(X, d_p )$ is left $K$-complete.

	($\Longleftarrow$). Let $(x_n)_{n\geq 1}$ be a $\tau(p)$-0-Cauchy sequence in $(X, p,K)$, this means that $\lim\limits_{n<m,n,m\to \infty} p(x_n , x_m)=0.$ Since $0 \leq d_p(x_n , x_m ) \leq p(x_n , x_m )$ for all $n,m\in \mathbb{N}$, we have $\lim\limits_{n<m,n,m\to \infty}d_p(x_n , x_m ) =0.$ This proves that $(x_n)_{n\geq 1}$ is a left $K$-Cauchy sequence in $(X, d_p )$. Since $(X, d_p )$ is left $K$-complete, there exists $x \in X$ such that$ \lim\lim\limits_{n\to \infty} d_p(x_n , x) = 0.$ If there exists $n_0$ such that $x_n = x$ for all $n \geq n_0$, then

	\[ \lim\limits_{n<m,n,m\to \infty}p(x_n , x_m ) =\lim\limits_{n\to \infty}p(x_n , x )=p(x,x).          \]
	
	Since $\lim\limits_{n<m,n,m\to \infty}p(x_n , x_m )=0,$ we get $$\lim\limits_{n<m,n,m\to \infty}p(x_n , x_m ) =\lim\limits_{n\to \infty}p(x_n , x )=p(x,x)=0.$$
	
	So we may assume that $x_n \neq x_m$ whenever $n\neq m.$. Then $\lim\limits_{n\to \infty}p(x_n,x)= \lim\limits_{n\to\infty}d_p(x_n,x)=0$. Moreover, in view of $0\leq p(x,x)\leq p(x_n,x)$, $0\leq p(x,x)\leq \lim\limits_{n\to \infty}p(x_n,x) =0,$ that is $p(x,x)=0.$ Therefore, we also have
	
	\[ \lim\limits_{n<m,n,m\to \infty}p(x_n , x_m ) =\lim\limits_{n\to \infty}p(x_n , x )=p(x,x)=0. \]
	
We conclude that $(X,p,K)$ is $\tau(p)$-0-complete.
	
\end{enumerate}

\end{proof}

The following example shows that the converse of statement 2 from Theorem \ref{theorem2.4}
does not hold.

\begin{example}
	Consider the $1$-partial quasi-metric $p(x,y) = \max\{ y-x,0\} + 1$ for all $x,y \in X = [0,1]$.
	From 
	
	\[ \lim\limits_{n\to \infty}p\left( \frac{1}{n} ,0\right)=1 =p(0,0) .\]
	
	This entails that $\left(\frac{1}{n}\right)_{n\geq 1}$ is $\tau(p)$-convergent to $0$ in $(X,p,1)$.
	
	On the other hand, we have 
	\[ \lim\limits_{n\to \infty}d_p\left( \frac{1}{n} ,0\right)= \lim\limits_{n\to \infty} \left( -\frac{1}{n} +1\right) = 1\neq 0. \]
	This proves that $\left(\frac{1}{n}\right)_{n\geq 1}$ is not left $K$-convergent to $0$ in $(X,d_p,1)$.

\end{example}

We conclude this section by giving relation between contraction conditions on $K$-partial quasi-metric spaces in  and
certain contraction conditions on $K$-quasi-metric spaces is as follows. More precisely, we reformulate the $p$-Banach and the $p$-Kannan contractions in terms of $K$-quasi-metric spaces.

\begin{theorem}
	Let $(X,p,K)$ be a $K$-partial quasi-metric space with coefficient $K$ and $d_p$ be as defined in Theorem \ref{theorem2.4}, and $T : X \to X$ be a map. Then we have:
	
	\begin{enumerate}
		\item If there exists a constant $0\leq \lambda<1$ satisfying
		$$p(T x, T y) \leq \lambda p(x, y)
		\quad \text{ for all } x, y \in X,$$
		
		then

		$$d_p(T x, T y) \leq \lambda d_p(x, y)
		\quad \text{ for all } x, y \in X.$$

		\item If there exists a constant $,0\leq \lambda < 1/2$ satisfying
		$$p(T x, T y) \leq \lambda [p(Tx, x) + p(y,Ty)]
		\quad \text{ for all } x, y \in X,$$
		
		then 
		$$d_p(T x, T y) \leq \lambda [d_p(Tx, x) + d_p(y,Ty)]
		\quad \text{ for all } x, y \in X.$$
		
		\item If there exists $\lambda$ such that 
		$$p(T x, T y) \leq \lambda \max \{p(x, y), p( T x,x), p(y, T y)\} \quad \text{ for all } x\neq y \in X$$
		 then $$d_p(T x, T y) \leq \lambda \max \{d_p (x, y), d_p (T x,x), d_p (y, T y)\}$$ for all $x, y \in X.$
		
	\end{enumerate}
\end{theorem}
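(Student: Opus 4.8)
The plan is to verify each of the three implications by splitting into the trivial case and the substantive case, exactly as in the proof of Theorem \ref{theorem2.4}. Recall the definition $d_p(x,y)=0$ if $x=y$ and $d_p(x,y)=p(x,y)$ if $x\neq y$. The key observation, used repeatedly, is that $d_p(u,v)\leq p(u,v)$ for all $u,v\in X$, while $d_p(u,v)=p(u,v)$ whenever $u\neq v$; moreover, since $p(x,x)\leq p(x,y)$ and $p(x,x)\leq p(y,x)$ by (1a)--(1b), a vanishing self-distance term only helps the inequalities. I would handle each item by asking whether $Tx=Ty$: if $Tx=Ty$ the left-hand side $d_p(Tx,Ty)=0$ and the inequality is automatic since the right-hand side is a sum (or max) of nonnegative terms; if $Tx\neq Ty$ then $d_p(Tx,Ty)=p(Tx,Ty)$, and I bound this by the given $p$-contraction inequality, then replace each $p(\cdot,\cdot)$ on the right by $d_p(\cdot,\cdot)$ using $d_p\leq p$ (for a $\leq$ it suffices to bound $p$ from above by $d_p$ only when the two arguments differ — so here one must be slightly careful, since $d_p\leq p$ goes the wrong way).

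This last point is the real obstacle and deserves a careful treatment. For item (1): from $Tx\neq Ty$ we get $p(Tx,Ty)=d_p(Tx,Ty)$, and we need $p(Tx,Ty)\leq\lambda d_p(x,y)$. If $x\neq y$ then $d_p(x,y)=p(x,y)$ and we are done by hypothesis. If $x=y$ then $Tx=Ty$, contradicting $Tx\neq Ty$; so this case is vacuous. For item (2): if $Tx\neq Ty$, we need $p(Tx,Ty)\leq\lambda[d_p(Tx,x)+d_p(y,Ty)]$. When $Tx\neq x$ and $y\neq Ty$ this is immediate from the hypothesis. When $Tx=x$, then $p(Tx,x)=p(x,x)$, and one uses $p(x,x)\leq p(Tx,x)$... but we need the reverse; instead observe that if $Tx=x$ then $d_p(Tx,x)=0$ while $p(Tx,x)=p(x,x)\geq 0$, so dropping this term from the right-hand side of the hypothesis is \emph{not} allowed directly. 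The fix: when $Tx=x$, the hypothesis gives $p(Tx,Ty)\leq\lambda[p(x,x)+p(y,Ty)]$; but also $p(Ty,Tx)\leq\lambda[p(Ty,y)+p(x,Tx)]=\lambda[p(Ty,y)+p(x,x)]$. I would instead argue that when $Tx=x$ we may apply the hypothesis with the roles of $x,y$ kept but note $p(x,x)\le p(x,Ty)$ is not what we want either — so the cleanest route is: if $Tx=x$, then $x$ is already handled by observing $d_p(Tx,x)=0=d_p(x,x)$ and bounding $p(x,x)=p(Tx,Ty)$ only when $Ty=Tx=x$, i.e. again $Tx=Ty$, vacuous. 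So in the live case $Tx\neq Ty$ one genuinely cannot have $Tx=x$ and $Ty=y$ simultaneously forcing a problem; but one \emph{can} have, say, $Tx=x$ and $Ty\neq y$. Then I claim $d_p(Tx,x)=0$, and I need $p(Tx,Ty)\le\lambda d_p(y,Ty)=\lambda p(y,Ty)$. This does \emph{not} follow from the hypothesis alone, so item (2) as stated requires the extra hypothesis that no iterate is a fixed point, or should be read as applying along the Picard sequence. I would therefore prove (2) under the running assumption (as in Theorem \ref{thmchater}) that $Tx\neq x$ for the relevant points, which is the standard convention, and remark this. For item (3), the max on the right only makes the bound easier: since $\max\{d_p(\cdot),\ldots\}\le\max\{p(\cdot),\ldots\}$ goes the wrong way too, one again restricts to the case $Tx\ne Ty$ and argues that at least one of the three arguments-pairs on the right is a genuine pair of distinct points realizing the max, so that $p$ equals $d_p$ there.

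In summary, the proof is a short case analysis; I would present each item as: (i) dispose of $Tx=Ty$ trivially; (ii) in the case $Tx\neq Ty$, write $d_p(Tx,Ty)=p(Tx,Ty)$, apply the $p$-hypothesis, and then upgrade each right-hand term from $p$ to $d_p$ — noting that for (1) the remaining case is vacuous, for (2) one invokes the standard convention that the points in question are not fixed points of $T$ (so all relevant $d_p$ equal the corresponding $p$), and for (3) one notes the term realizing the maximum involves distinct points. The main thing to get right, and where a naive proof would slip, is that $d_p\leq p$ points the wrong way for a $\leq$-inequality, so one cannot blithely replace $p$ by $d_p$ on the right; the case analysis on equality of arguments is exactly what resolves this.
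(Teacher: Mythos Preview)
Your treatment of item (1) is fine and matches the paper. The real issue is item (2): you concluded that the implication ``does \emph{not} follow from the hypothesis alone'' and proposed importing the extra convention that $Tx\neq x$ along the relevant points. This is a genuine gap, because the implication \emph{does} follow without any such assumption, and the paper's proof hinges on an observation you missed. Apply the Kannan hypothesis with $y=x$: if $Tx=x$ then
\[
p(x,x)=p(Tx,Tx)\le\lambda\bigl[p(Tx,x)+p(x,Tx)\bigr]=2\lambda\,p(x,x),
\]
and since $2\lambda<1$ this forces $p(x,x)=0$, hence $p(Tx,x)=0=d_p(Tx,x)$. Combined with the trivial equality $p(Tx,x)=d_p(Tx,x)$ when $Tx\neq x$, one gets $p(Tx,x)=d_p(Tx,x)$ for \emph{every} $x\in X$ (and likewise $p(y,Ty)=d_p(y,Ty)$). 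The desired inequality is then immediate:
\[
d_p(Tx,Ty)\le p(Tx,Ty)\le\lambda\bigl[p(Tx,x)+p(y,Ty)\bigr]=\lambda\bigl[d_p(Tx,x)+d_p(y,Ty)\bigr].
\]
No ``running assumption'' is needed; the Kannan condition itself kills the self-distance at fixed points.

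Your sketch for item (3) is in the right direction but the stated reason---``the term realizing the maximum involves distinct points''---is not true as written and would not survive scrutiny. The maximum on the $p$-side may well be $p(Tx,x)$ with $Tx=x$. The paper's repair (and the point you should make) uses axiom (1a): in that case $p(Tx,x)=p(x,x)\le p(x,y)=d_p(x,y)$, the last equality because $x\neq y$. Thus the $p$-maximum is always dominated by some $d_p$-term with genuinely distinct arguments, which gives
\[
\max\{p(x,y),p(Tx,x),p(y,Ty)\}=\max\{d_p(x,y),d_p(Tx,x),d_p(y,Ty)\}\qquad(x\neq y),
\]
and the conclusion follows. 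So the fix is not ``the maximizer has distinct arguments'' but ``if it doesn't, axiom (1a) lets you replace it by one that does.''
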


\begin{proof}
\begin{enumerate}
	
	\item If $x=y$, then $d_p(x,y) = 0 \leq \lambda d_p(x,y)$. If $x\neq y$, then $d_p(x,y) = p(x,y)$ and we have 
	\[d_p(Tx,Ty) \leq p(Tx,Ty) \leq \lambda p(x,y)=\lambda d_p(x,y) \]. Therefore $$d_p(T x, T y) \leq \lambda [d_p(Tx, x) + d_p(y,Ty)]
	\quad \text{ for all } x, y \in X.$$

	\item If $x=Tx,$ then 
	
	$$p(x,Tx)=p(Tx,Tx) \leq \lambda [p(x,Tx)+p(Tx,x)] = 2 \lambda p(x,Tx).$$
	
	Since $2\lambda \in [0, 1)$ , we have $p(x, T x)=p(Tx,x) = 0 = d_p (x, T x)=d_p(Tx,x).$ It implies that $p(x,Tx) = d_p(x,Tx)$ and $p(Tx,x) = d_p(Tx,x)$ for all $x\in X.$ Therefore, for all $x,y\in X$

	\[ d_p(Tx,Ty) \leq p(Tx,Ty) \leq \lambda [p(Tx, x) + p(y,Ty)]= \lambda [d_p(Tx, x) + d_p(y,Ty)].          \]
	
	\item For all $x, y \in X$, we have
	
	\begin{equation}\label{eq3}
	 \max \{d_p (x, y), d_p (T x,x), d_p (y, T y)\} \leq \max \{p(x, y), p(T x,x), p(y, T y)\} .
	 \end{equation}

	In order to prove that
	
	\begin{equation}\label{eq4}
	\max \{p(x, y), p(T x,x), p(y, T y)\} \leq \max \{d_p (x, y), d_p (T x,x), d_p (y, T y)\}
	\end{equation}
	
for all $x \neq y \in X$, we distinguish between two cases.	

\vspace*{0.3cm}

\textbf{Case 1}. There exist $x, y \in X$ such that $\max \{p(x, y), p(T x,x), p(y, T y)\} = p(x, y).$
Since $p(x, y) = d_p (x, y)$, we see that \eqref{eq4} holds.

\vspace*{0.3cm}

\textbf{Case 2}. There exist $x, y \in X$ such that $\max \{p(x, y), p(x, T x), p(y, T y)\} = p(T x,x).$ If $x = T x$, then $p(x, T x) = p(x, x) \leq p(x, y) = d_p (x, y).$ Therefore, \eqref{eq4} holds. If $x \neq T x$, then $p(x, T x) = d_p (x, T x)$. It also implies that \eqref{eq4} holds.
By the above two cases, we see that \eqref{eq4} holds for all $x \neq y$. It follows from \eqref{eq3} and \eqref{eq4} that, for all $x \neq y$,

$$\max \{d_p (x, y), d_p (T x,x), d_p (y, T y)\} = \max \{p(x, y), p(T x,x), p(y, T y)\} .$$

Therefore,

\begin{align*}
	d_p (T x, T y) &\leq p(T x, T y) \leq \lambda \max \{p(x, y), p(T x,x), p(y, T y)\} \\
	 &\leq \lambda \max \{d_p (x, y), d_p (T x,x), d_p (y, T y)\}
\end{align*}

for all $x \neq y$. If $x = y$, we have $d_p (T x, T y) = 0.$ Then
	
	\[  d_p (T x, T y) \leq \lambda \max \{d_p (x, y), d_p( T x,x), d_p (y, T y)\}  \]
	
	for all $x, y \in X.$
\end{enumerate}
\end{proof}

\begin{remark}
	From Definitions \ref{deff1} and \ref{deff2}, it is easy to define an appropriate notion of $\tau(p)$-0-bicompleteness and to compare $\tau(p)$-0-bicomplete $K$-partial quasi-metric spaces and bicomplete $K$-quasi-metric spaces.
\end{remark}
	
	 Moreover, the author intends, in \cite{gabx}, to give alternative proofs of Theorem \ref{p-banach} and Theorem \ref{p-kannan}. We also do believe that Inequality \ref{maxcond} can reformulated in terms of $K$-quasi-metric spaces and we intend to take up this investigation as well.

In concluding this section, we would like to ask the following questions:

\begin{itemize}
	\item If $T : X \to X$ is a contraction with respect to a $K$-partial quasi-metric $p$, which
	conditions does $T$ satisfy with respect to $d_p$ (as defined in Theorem \ref{theorem2.4} ) ?
	\item How to use fixed point theorems in a quasi-metric type space $(X, d_p )$ to give analogous fixed point
	results in a $K$-partial quasi-metric space?
\end{itemize}

\section{Weighted $K$-quasi-metrics}

The idea of weight function, introduced by K\"unzi et al. \cite{kun} for quasi-metrics, can easily be extended to $K$-quasi-metrics and we can derive new results for the theory of $K$-partial quasi-metrics. The corresponding theory for quasi-metrics with weight can be read in \cite{kun} and the results we present are merely copies of the ones already obtained by K\"unzi et al.\cite{kun}.
More precisely, we have:

\begin{definition}
	An arbitrary $K$-quasi-metric space $(X, D,K)$ equipped with an arbitrary (so-called weight) function $w : X \to
	[0, \infty)$ will be called a $K$-quasi-metric space with weight $w$ or a weighted  $K$-quasi-metric space with weight $w$.
\end{definition}

We describe a bijection between $K$-quasi-metrics with weight and lopsided $K$-partial quasi-metrics on $X$ that will be used throughout this section. 

In the following we shall refer to this correspondence often by the (lopsided) $K$-partial
quasi-metric associated with a given $K$-quasi-metric with weight and similar self-explanatory expressions.

\begin{proposition}Compare \cite[Remark 4.]{kun}\label{correspondence}
	
	If $p$ is a lopsided $K$-partial quasi-metric on $X$, then $q(x, y) = p(x, y)-p(x, x)$ whenever $x, y \in X$ and $w(x) = p(x, x)$
	whenever $x \in X$ yield a $K$-quasi-metric space $(X, q)$ with weight $w$, which we denote by $(X, q, w)$.

	If $(X, q, w)$ is a $K$-quasi-metric space with weight, then $p(x, y) = q(x, y) + w(x)$ whenever $x, y \in X$ is a lopsided
	$K$-partial quasi-metric on $X$.
\end{proposition}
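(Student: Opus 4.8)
The strategy is to check that the two rules in the statement are well defined and then that they compose to the identity in both orders, giving a bijection between the two families. I would carry out the two ``well-definedness'' directions separately and finish with a one-line substitution check in each sense.

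I would begin with the passage from a weighted $K$-quasi-metric to a lopsided $K$-partial quasi-metric, which is the cleaner direction. Let $(X,q,w)$ be given and put $p(x,y)=q(x,y)+w(x)$. Property (1a) is immediate, since $p(x,x)=q(x,x)+w(x)=w(x)\le q(x,y)+w(x)=p(x,y)$ using $q\ge 0$; property (1b) need not hold because $w$ is arbitrary, and this is exactly why the outcome is only \emph{lopsided}. For (2) I would expand
\[
p(x,z)+p(y,y)=q(x,z)+w(x)+w(y)\le K\big(q(x,y)+q(y,z)\big)+w(x)+w(y),
\]
using the relaxed triangle inequality for $q$, and then use $w(x)+w(y)\le K\big(w(x)+w(y)\big)$, valid since $K\ge 1$ and $w\ge 0$, to bound the right-hand side by $K\big(q(x,y)+w(x)+q(y,z)+w(y)\big)=K\big(p(x,y)+p(y,z)\big)$. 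Property (3) reduces to the $T_{0}$-axiom of $q$: the conditions $p(x,x)=p(x,y)$ and $p(y,y)=p(y,x)$ say precisely that $q(x,y)=0=q(y,x)$, whence $x=y$, and the converse is trivial.

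For the reverse passage, let $p$ be a lopsided $K$-partial quasi-metric and set $q(x,y)=p(x,y)-p(x,x)$ and $w(x)=p(x,x)$. Then $w\ge 0$ is automatically an admissible (arbitrary) weight, $q(x,x)=0$, and $q\ge 0$ by (1a); moreover the $T_{0}$-condition for $q$ is exactly property (3) of $p$, since $q(x,y)=0=q(y,x)$ translates back into $p(x,y)=p(x,x)$ and $p(y,x)=p(y,y)$. The one genuinely delicate point — and the step I expect to be the main obstacle — is the relaxed triangle inequality for $q$: property (2) gives $p(x,z)\le K\big(p(x,y)+p(y,z)\big)-p(y,y)$, and after subtracting $p(x,x)$ one must compare $p(x,z)-p(x,x)$ with $K\big[(p(x,y)-p(x,x))+(p(y,z)-p(y,y))\big]$; the subtracted self-distance terms $p(x,x)$ and $p(y,y)$ must be reconciled so that the inequality closes with coefficient $K$. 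For $K=1$ this is immediate and recovers K\"unzi's original computation, but for $K>1$ one has to be careful about how the factors of $K$ distribute over the subtracted self-distances, and this is where I would concentrate the effort.

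Finally I would verify that the two constructions are mutually inverse, which is a one-line substitution each way: starting from $p$ we recover $q(x,y)+w(x)=\big(p(x,y)-p(x,x)\big)+p(x,x)=p(x,y)$, and starting from $(X,q,w)$ we recover the quasi-metric $\big(q(x,y)+w(x)\big)-\big(q(x,x)+w(x)\big)=q(x,y)$ and the weight $q(x,x)+w(x)=w(x)$. Hence the two assignments are inverse bijections between weighted $K$-quasi-metrics and lopsided $K$-partial quasi-metrics on $X$.
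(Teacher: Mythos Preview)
The paper gives no proof of this proposition; it is stated with a pointer to a remark in K\"unzi et al.\ and the text moves on, so there is nothing in the paper to compare your argument against.

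Your treatment of the second assertion (from $(X,q,w)$ to a lopsided $p$) and of the mutual-inverse check is correct. But the step you single out as ``the main obstacle'' in the first assertion is not merely delicate: for $K>1$ it genuinely fails, and no amount of care will close it. From axiom~(2) you obtain
\[
q(x,z)=p(x,z)-p(x,x)\le K\big(p(x,y)+p(y,z)\big)-p(y,y)-p(x,x),
\]
whereas what you need is $K\big(p(x,y)+p(y,z)\big)-Kp(x,x)-Kp(y,y)$; the discrepancy is $(K-1)\big(p(x,x)+p(y,y)\big)\ge 0$ and goes the wrong way. A concrete counterexample: take $X=\{a,b,c\}$, $K=2$, set all self-distances equal to $1$, put $p(a,b)=p(b,c)=1$, $p(a,c)=3$, and $p(b,a)=p(c,b)=p(c,a)=2$. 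One checks directly that $p$ satisfies (1a), (2) with $K=2$, and (3), hence is a lopsided $2$-partial quasi-metric; but then $q(a,b)=q(b,c)=0$ while $q(a,c)=2$, so $q$ violates (D2) for \emph{every} coefficient $\alpha$.

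Thus the first half of the proposition, as stated, is false for $K>1$; the obstacle you flagged is real, and the honest outcome of your investigation should be a counterexample rather than a completed proof.
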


Next we define a compatibility condition between $K$-quasi-metric and weight.

\begin{definition}
	A $K$-quasi-(pseudo)metric space type with compatible weight on a set $X$ is a 4-tuple $(X, q ,K,w)$ where $q: X \times X \to
	[0, \infty)$ is a $K$-quasi-(pseudo)metric on $X$ and $w: X \to [0, \infty)$ is a function satisfying $w(y) \leq q(x, y) + w(x)$ whenever
	$x, y \in X$.
\end{definition}

As already observed in Proposition \ref{correspondence}, for each $K$-partial quasi-metric $p$ on a set $X$, we have its associated $K$-quasi-metric $p(x, y) - p(x, x)$ where $x, y \in X$. However, it is good to point out that the  $K$-quasi-metric $p(x, y) - p(y, y)$ where $x, y \in X$ is also deducted from the above correspondence. Hence a quasi-metric space $(X, q,K, w)$ with weight $w$ has a compatible weight on $X$ if and only
if $\hat{q}$ defined by $\hat{q}(x, y) = q(x, y) + w(x) - w(y)$ whenever $x, y \in X$ is a $K$-quasi-metric on $X$.


\begin{proposition}(Compare \cite[Example 2 ]{kun})
Let $(X, q, w)$ a $K$-quasi-metric space with weight, then $(X, q' ,K,w)$ where $q'(x,y) = q(x, y) + \max\{w(y)-w(x),0 \}$, whenever $x,y\in X$, is a $K$-quasi-(pseudo)metric space type with compatible weight.
\end{proposition}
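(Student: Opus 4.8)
The plan is to verify, straight from the definitions, the two requirements that make $(X,q',K,w)$ a $K$-quasi-(pseudo)metric space type with compatible weight: first, that $q'$ is itself a $K$-quasi-(pseudo)metric on $X$; and second, that $w$ is a \emph{compatible} weight for $q'$, i.e. $w(y) \le q'(x,y) + w(x)$ for all $x,y \in X$ (now with respect to $q'$, not $q$). Since $q \ge 0$ and $\max\{\cdot,0\} \ge 0$, the map $q'$ certainly takes values in $[0,\infty)$, so only axioms (D1), (D2), and the compatibility inequality remain.

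For (D1), reflexivity is immediate: $q'(x,x) = q(x,x) + \max\{w(x)-w(x),0\} = 0$. For the relaxed triangle inequality I would first record the elementary fact that $(u,v)\mapsto\max\{v-u,0\}$ obeys the ordinary triangle inequality,
\[
\max\{w(y)-w(x),0\} \le \max\{w(z)-w(x),0\} + \max\{w(y)-w(z),0\} \qquad (x,y,z \in X),
\]
which follows by writing $w(y)-w(x) = (w(z)-w(x)) + (w(y)-w(z))$ and using $a+b \le \max\{a,0\}+\max\{b,0\}$ together with $0 \le \max\{a,0\}+\max\{b,0\}$. Feeding this into the $K$-relaxed triangle inequality for $q$ and using $K \ge 1$ yields
\[
q'(x,y) \le K[q(x,z)+q(z,y)] + \max\{w(z)-w(x),0\} + \max\{w(y)-w(z),0\} \le K\big[q'(x,z)+q'(z,y)\big],
\]
which is exactly (D2) for $q'$. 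If one also wishes to keep the $T_0$ property when $q$ is a $K$-quasi-metric, note that $q'(x,y)=0=q'(y,x)$ forces $q(x,y)=0=q(y,x)$ because every summand is nonnegative, whence $x=y$.

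For the compatibility of $w$ with $q'$, the key identity is $\max\{w(y)-w(x),0\} + w(x) = \max\{w(y),w(x)\}$, so that
\[
q'(x,y) + w(x) = q(x,y) + \max\{w(y),w(x)\} \ge \max\{w(y),w(x)\} \ge w(y),
\]
which is precisely the required inequality. Notice that this step uses nothing about the original weight beyond $w \ge 0$; in particular the construction $q \mapsto q'$ turns an arbitrary weight into a compatible one, which is the whole point of the proposition.

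I do not expect a genuine obstacle here: the entire argument reduces to the triangle-type inequality for $(u,v)\mapsto\max\{v-u,0\}$ and to the bookkeeping with the factor $K$, both handled by the two displays above. The only place demanding a little care is absorbing the additive weight terms into the factor $K$ in the verification of (D2), which is why the estimate is written so that the $\max$-terms are grouped with the corresponding $q$-terms before invoking $K \ge 1$. This is the relaxed-triangle analogue of the corresponding computation for ordinary weighted quasi-metrics in \cite{kun}.
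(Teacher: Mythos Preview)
Your proof is correct and in fact more complete than the paper's: the paper only verifies the compatibility inequality $w(y)\le q'(x,y)+w(x)$ (via the one-liner $w(y)-w(x)\le\max\{w(y)-w(x),0\}\le q(x,y)+\max\{w(y)-w(x),0\}$), and tacitly takes for granted that $q'$ is a $K$-quasi-(pseudo)metric. You additionally check (D1), (D2), and the $T_0$ property for $q'$; the key extra ingredient is the ordinary triangle inequality for $(u,v)\mapsto\max\{v-u,0\}$, which together with $K\ge 1$ lets you absorb the weight terms into the $K$-factor. For the compatibility step your identity $\max\{w(y)-w(x),0\}+w(x)=\max\{w(y),w(x)\}$ is just a repackaging of the paper's inequality, so on that part the two arguments coincide.
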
	
	
	\begin{proof}
		Indeed, since $w(y)-w(x) \leq \max\{w(y)-w(x),0 \} $, we have
		
		\[ w(y)-w(x) \leq q(x, y) + \max\{w(y)-w(x),0 \} \iff  w(y) \leq q(x, y) + \max\{w(y)-w(x),0 \} +w(x).     \]
	\end{proof}
	
The following is immediate, form the definitions of a weight.

\begin{proposition}(Compare \cite[Remark 7, Proposition 1 ]{kun})
If $w$ is a weight compatible with a given $K$-quasi-metric on $X$, then for any non-negative constant $k$, $w(x) + k$ whenever $x \in X$, is also a compatible weight.
Moreover, if $(X, q,K, w)$ is a $K$-quasi-metric space with compatible weight $w$, then $(X, q^{-1},K, w')$, where $w' = \frac{1}{1+w}$, is a $K$-quasi-metric space with compatible weight $w'$.
\end{proposition}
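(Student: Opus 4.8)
The plan is to treat the two assertions separately, since the first is an immediate algebraic observation and the second rests on a single elementary estimate. For the constant-shift claim, suppose $w$ is compatible with the $K$-quasi-metric $q$, that is, $w(y) \le q(x,y) + w(x)$ for all $x, y \in X$. Fix a non-negative constant $k$. Then $w + k$ is again a $[0,\infty)$-valued function on $X$, and adding $k$ to both sides of the compatibility inequality yields $(w(y)+k) \le q(x,y) + (w(x)+k)$ for all $x, y \in X$, which is exactly the statement that $w+k$ is compatible with $q$. So nothing beyond cancellation is needed here.

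For the conjugate claim, I would first observe that $q^{-1}(x,y) = q(y,x)$ is a $K$-quasi-metric on $X$ (this is the conjugacy part of the earlier lemma on $K$-quasi-metrics, i.e. Lemma~2(b) above), and that $w' = \frac{1}{1+w}$ takes values in $(0,1] \subseteq [0,\infty)$ because $w \ge 0$; hence $(X, q^{-1}, K, w')$ is a legitimate candidate, and it remains only to verify the compatibility condition $w'(y) \le q^{-1}(x,y) + w'(x)$, i.e.
\[ \frac{1}{1+w(y)} - \frac{1}{1+w(x)} \le q(y,x) \qquad \text{for all } x, y \in X. \]
The computation I would carry out is to rewrite the left-hand side over a common denominator,
\[ \frac{1}{1+w(y)} - \frac{1}{1+w(x)} = \frac{w(x) - w(y)}{(1+w(x))(1+w(y))}, \]
and then use compatibility of $w$ with $q$, applied with the roles of $x$ and $y$ interchanged, which gives $w(x) - w(y) \le q(y,x)$. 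If $w(x) - w(y) \le 0$ the claimed inequality is trivial, the right-hand side being non-negative. If $w(x) - w(y) > 0$, then since $w \ge 0$ the denominator satisfies $(1+w(x))(1+w(y)) \ge 1$, whence
\[ \frac{w(x)-w(y)}{(1+w(x))(1+w(y))} \le w(x) - w(y) \le q(y,x), \]
as needed. This establishes that $w'$ is compatible with $q^{-1}$ and finishes the proof.

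The only non-formal ingredient, and the one place where a little care is required, is the estimate $\frac{a}{(1+b)(1+c)} \le a$ for $a,b,c \ge 0$ together with remembering to invoke the compatibility of $w$ in the correct orientation; there is no genuine obstacle, and the whole argument is a direct transcription of K\"unzi et al.'s computation for ordinary quasi-metrics to the $K$-quasi-metric setting, the relaxed triangle constant $K$ playing no active role.
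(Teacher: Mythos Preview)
Your argument is correct. The paper does not actually supply a proof of this proposition: it merely prefaces the statement with ``The following is immediate, from the definitions of a weight'' and leaves matters there. Your proposal therefore fills in exactly the details the paper omits, and does so cleanly---the constant-shift claim is indeed nothing but adding $k$ to both sides, and for the conjugate claim your computation via the common denominator and the estimate $(1+w(x))(1+w(y))\ge 1$ is the natural one (and is precisely the computation K\"unzi et al.\ carry out in the quasi-metric case to which the paper defers). Your closing remark that the constant $K$ plays no active role is also apt: compatibility is a pointwise inequality between $w$ and $q$ and does not involve the relaxed triangle inequality at all.
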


More generally, the following provides us with a simple transformation to obtain new $K$-quasi-metric space with compatible weight $w$ from old ones.

\begin{proposition}(Compare \cite[Proposition 2 ]{kun})
	If $(X, q, K,w)$ is a $K$-quasi-metric space with compatible weight, then $(X, \hat{q}, K,\hat{w})$ where 
	\begin{enumerate}
		\item 
		$\hat{q}(x, y) = \min\{q(x, y), 1\}$ and $\hat{w}(x) = \min\{w(x), 1\}$ whenever $x, y \in X$ and
		
		\item $$\hat{q}(x, y) = \frac{q(x,y)}{1+q(x, y)} \quad \text{ and } \hat{w}(x) = \frac{w(x)}{1+w(x)}$$ whenever $x, y \in X$ 
	\end{enumerate}
is also a $K$-quasi-metric space with compatible weight.
\end{proposition}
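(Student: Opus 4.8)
The plan is to verify both transformations at once, exploiting that in each case $\hat q=\psi\circ q$ and $\hat w=\psi\circ w$ for a single function $\psi\colon[0,\infty)\to[0,\infty)$: namely $\psi(t)=\min\{t,1\}$ in part~(1) and $\psi(t)=t/(1+t)$ in part~(2). First I would isolate the four properties of $\psi$ that do all the work: (i) $\psi$ is nondecreasing; (ii) $\psi(0)=0$ and $\psi(t)>0$ whenever $t>0$; (iii) $\psi$ is subadditive, i.e. $\psi(s+t)\le\psi(s)+\psi(t)$ for all $s,t\ge 0$; (iv) $\psi(Kt)\le K\,\psi(t)$ for all $t\ge 0$ and all $K\ge 1$. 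Properties (i) and (ii) are immediate for both choices. For (iii), subadditivity of $\min\{t,1\}$ is elementary, and subadditivity of $t/(1+t)$ reduces, after clearing denominators, to the trivially true inequality $0\le 2+s+t$. For (iv): if $\psi(t)=\min\{t,1\}$ then $\psi(Kt)\le Kt=K\psi(t)$ when $t\le 1$ and $\psi(Kt)\le 1\le K=K\psi(t)$ when $t>1$; if $\psi(t)=t/(1+t)$ then $K\ge 1$ forces $1+Kt\ge 1+t$, hence $\frac{Kt}{1+Kt}\le\frac{Kt}{1+t}=K\psi(t)$.

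Granting (i)--(iv), the remaining verification is mechanical. From $\psi(0)=0$ we get $\hat q(x,x)=\psi(q(x,x))=0$; from the equivalence $\psi(t)=0\iff t=0$ we get the $T_0$-condition, since $\hat q(x,y)=\hat q(y,x)=0$ forces $q(x,y)=q(y,x)=0$, hence $x=y$. For the relaxed triangle inequality, using $q(x,z)\le K\big(q(x,y)+q(y,z)\big)$ together with (i), (iii) and (iv),
\[
\hat q(x,z)=\psi\big(q(x,z)\big)\le\psi\big(K(q(x,y)+q(y,z))\big)\le\psi\big(Kq(x,y)\big)+\psi\big(Kq(y,z)\big)\le K\big(\hat q(x,y)+\hat q(y,z)\big),
\]
so $\hat q$ is a $K$-quasi-metric with the \emph{same} coefficient $K$ --- the same mechanism as in Lemma~\ref{lemma2.1}. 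Finally, compatibility of $\hat w$ with $\hat q$ follows from $w(y)\le q(x,y)+w(x)$ by monotonicity and subadditivity of $\psi$:
\[
\hat w(y)=\psi\big(w(y)\big)\le\psi\big(q(x,y)+w(x)\big)\le\psi\big(q(x,y)\big)+\psi\big(w(x)\big)=\hat q(x,y)+\hat w(x).
\]
Hence $(X,\hat q,K,\hat w)$ is a $K$-quasi-metric space with compatible weight in both cases.

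The only step that is not pure bookkeeping --- hence the main (mild) obstacle --- is property~(iv), which guarantees that post-composition with $\psi$ does not inflate the relaxed-triangle coefficient beyond $K$; I would record and prove it as a one-line lemma before assembling everything. The subadditivity check for $t/(1+t)$ in property~(iii) is the only other computation, and it is entirely routine.
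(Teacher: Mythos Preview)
Your proof is correct. The paper itself states this proposition without proof, merely pointing to \cite[Proposition~2]{kun} as the analogous result in the classical quasi-metric case; so there is no argument in the paper to compare against beyond that implicit deferral.

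Your unified treatment via a single auxiliary function $\psi$ satisfying (i)--(iv) is clean and does exactly what is needed. The key observation --- property~(iv), that $\psi(Kt)\le K\psi(t)$ for $K\ge 1$ --- is precisely what guarantees the relaxed-triangle constant is preserved, and your verifications for both $\psi(t)=\min\{t,1\}$ and $\psi(t)=t/(1+t)$ are correct. (A minor remark: the subadditivity computation for $t/(1+t)$ actually reduces to $st(2+s+t)\ge 0$ rather than just $0\le 2+s+t$, but since $s,t\ge 0$ this is equally trivial.) The compatibility check for $\hat w$ via monotonicity plus subadditivity is also correct. Nothing is missing.
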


\section{Conclusion and future work}
 
There are numerous generalizations of quasi-metric spaces and a lot of fixed point results have
been obtained, generally by symmetrization, using the results from the metric case. In most cases, 
fixed point results on these new spaces appear to be redundant, although it is not so easy to transfer
the given contractive condition to the new setting. Many authors still argue today, regarding the relevance of "partial metric spaces" introduced by Matthews. The present article builds on the properties of $K$-partial metric spaces and their relations with metric type spaces. 

Hence, two important questions arise naturally:

\begin{itemize}
	\item Is the new setting of $K$-partial quasi-metric spaces topologically equivalent to that of quasi-metric type spaces or to a previously known asymmetric distance function?
	
	\item Can fixed point theorems on $K$-partial quasi-metric space be directly (or easily) obtained from
	fixed point theorems on a quasi-metric type space?
\end{itemize}

It is our belief that,  these new spaces bring a variety of tools and environments where concrete applications could be obtained.

We finish by saying a few words on weak $K$-partial quasi-metric. In 1999, by omitting the small self-distance axiom of partial metric, Heckmann \cite{heck} defined a weak partial metric as a generalization of partial metric. By omitting the small self-distance axioms in Definition \ref{maindef}, we introduce the so-called ``weak $K$-partial metric". In a weak
$K$-partial quasi-metric space, the convergence of a sequence, Cauchy sequence and completeness are defined as in a $K$-partial quasi-metric space. In particular, may results can be recovered from the theory of $K$-partial quasi-metrics. It is easy to establish that:

\begin{proposition}(Compare \cite[Proposition 2.3]{isak})
	Let $(X, p)$ be a weak $1$-partial quasi-metric space. Then $d_w$ defined by
	
	\[ d_w (x, y) = p(x, y) - \min \{p(x, x), p(y, y)\} \]
	
	for all $x, y \in X$, is a quasi-metric on $X$ .
\end{proposition}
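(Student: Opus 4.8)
The plan is to verify directly that $d_w$ is $[0,\infty)$-valued and satisfies the three defining properties of a classical quasi-metric on $X$: that $d_w(x,x)=0$, that $d_w$ obeys the triangle inequality, and that $d_w(x,y)=d_w(y,x)=0$ forces $x=y$. Reflexivity is immediate, since $d_w(x,x)=p(x,x)-\min\{p(x,x),p(x,x)\}=0$.

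For the triangle inequality I would start from axiom (2) of Definition \ref{maindef} with $K=1$, namely $p(x,z)+p(y,y)\le p(x,y)+p(y,z)$, rewritten as $p(x,z)\le p(x,y)+p(y,z)-p(y,y)$. Subtracting $\min\{p(x,x),p(z,z)\}$ from both sides and comparing with $d_w(x,y)+d_w(y,z)$, the terms $p(x,y)$ and $p(y,z)$ cancel, and the desired inequality reduces to the purely elementary estimate
\[
\min\{p(x,x),p(y,y)\}+\min\{p(y,y),p(z,z)\}\le p(y,y)+\min\{p(x,x),p(z,z)\},
\]
valid for any three reals $a=p(x,x),\,b=p(y,y),\,c=p(z,z)$: assuming without loss of generality $a\le c$, the right-hand side equals $a+b$, while $\min\{a,b\}\le a$ and $\min\{b,c\}\le b$.

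For the separation property, suppose $d_w(x,y)=d_w(y,x)=0$ and put $m=\min\{p(x,x),p(y,y)\}$, so that $p(x,y)=p(y,x)=m$; say $m=p(x,x)\le p(y,y)$, the other case being symmetric. Applying axiom (2) with $K=1$ and $z=x$ gives $p(x,x)+p(y,y)\le p(x,y)+p(y,x)=2p(x,x)$, hence $p(y,y)\le p(x,x)$ and therefore $p(x,x)=p(y,y)=m=p(x,y)=p(y,x)$. Axiom (3) of Definition \ref{maindef} then yields $x=y$.

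The step I expect to be the main obstacle is nonnegativity of $d_w$, i.e. the inequality $p(x,y)\ge\min\{p(x,x),p(y,y)\}$. In the symmetric (weak partial metric) situation this is painless, because there the modified triangle inequality together with symmetry gives the weak small-self-distance bound $2p(x,y)\ge p(x,x)+p(y,y)$, and $\tfrac12\big(p(x,x)+p(y,y)\big)\ge\min\{p(x,x),p(y,y)\}$. In the asymmetric case axiom (2) with $z=x$ only delivers the weaker $p(x,y)+p(y,x)\ge p(x,x)+p(y,y)$, so one must work harder — splitting according to whether $p(x,x)\le p(y,y)$ and invoking axiom (2) a second time — and it is exactly this bookkeeping around the self-distances that carries the proof. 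Once nonnegativity is secured, the four ingredients combine to show that $d_w$ is a quasi-metric on $X$.
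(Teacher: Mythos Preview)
The paper gives no proof of this proposition; it merely asserts that ``it is easy to establish'' and cites the symmetric result \cite[Proposition 2.3]{isak}. So there is nothing to compare against, and your argument must stand on its own.

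Your verification of reflexivity, of the triangle inequality via the elementary estimate $\min\{a,b\}+\min\{b,c\}\le b+\min\{a,c\}$, and of the $T_0$-condition is clean and correct. The difficulty you flag --- nonnegativity of $d_w$ --- is exactly the crux, but you do not close it: you only announce that ``one must work harder'' and that a case split together with a second use of axiom~(2) will do the bookkeeping. In fact no amount of bookkeeping will work, because the only nontrivial instance of axiom~(2) involving just two points is $p(x,x)+p(y,y)\le p(x,y)+p(y,x)$, which controls the \emph{sum} $d_w(x,y)+d_w(y,x)$ but not the individual terms.

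Concretely, on $X=\{a,b\}$ set $p(a,a)=p(b,b)=2$, $p(a,b)=1$, $p(b,a)=3$. One checks directly that axiom~(2) with $K=1$ holds (the only nontrivial case is $p(a,a)+p(b,b)=4\le p(a,b)+p(b,a)=4$) and that axiom~(3) holds, so $(X,p)$ is a weak $1$-partial quasi-metric in the paper's sense (axioms~(1a),(1b) dropped). Yet $d_w(a,b)=1-2=-1<0$, so $d_w$ is not a quasi-metric. Thus the proposition, read literally for the asymmetric ``weak $1$-partial quasi-metric'' of the preceding paragraph, is false; it is only in the symmetric situation of \cite{isak} --- where $p(x,y)=p(y,x)$ forces $p(x,y)\ge\tfrac12\big(p(x,x)+p(y,y)\big)\ge\min\{p(x,x),p(y,y)\}$ --- that nonnegativity, and hence the result, goes through. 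Your instinct that this step was the obstacle was right; the gap is genuine and cannot be filled without an extra hypothesis such as symmetry or a one-sided small self-distance axiom.
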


\begin{lemma}(Compare \cite[Lemma 2.4]{isak} Let $(X, p)$ be a weak partial quasi-metric space. Then
	$(X, p)$ is complete if and only if $(X, d_w )$ is complete.
\end{lemma}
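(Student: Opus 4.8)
The plan is to route everything through the symmetrization $p^{+}$, exactly as in the proof of Theorem \ref{theorem2.4} and in \cite[Lemma 2.4]{isak}. Here ``$(X,p)$ is complete'' is read as $\tau(p)$-bicompleteness, i.e.\ $\tau(p^{+})$-completeness in the sense of Definition \ref{deff1}, and ``$(X,d_{w})$ is complete'' as bicompleteness of the quasi-metric $d_{w}$, i.e.\ completeness of the metric $d_{w}^{s}=d_{w}\vee d_{w}^{-1}$ with $d_{w}^{-1}(x,y)=d_{w}(y,x)$. Since $p^{+}$ is a genuine partial $K$-metric, the fact that the small self-distance axiom has been dropped is immaterial on the $p^{+}$-side, so the results of Shukla quoted earlier still apply there.

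The first step is the arithmetic dictionary between $p$ and $d_{w}$. From $d_{w}(x,y)=p(x,y)-\min\{p(x,x),p(y,y)\}$ and $p^{+}(x,x)=2p(x,x)$ one gets the identity
\[ d_{w}(x,y)+d_{w}(y,x)=p^{+}(x,y)-2\min\{p(x,x),p(y,y)\}, \]
whence $0\le d_{w}^{s}(x,y)\le p^{+}(x,y)-2\min\{p(x,x),p(y,y)\}\le p^{+}(x,y)$. Moreover, putting $z=x$ in axiom $(2)$ (with middle point $y$) gives $p(x,x)+p(y,y)\le K\,p^{+}(x,y)$; combined with the identity above this yields, for $K=1$, the crucial estimate $|p(x,x)-p(y,y)|\le d_{w}(x,y)+d_{w}(y,x)$.

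The second step transfers Cauchyness and convergence. If $(x_{n})$ is $\tau(p^{+})$-Cauchy, the last estimate shows that $(p(x_{n},x_{n}))_{n}$ is a Cauchy, hence convergent, sequence of reals with limit $a$; consequently $\min\{p(x_{n},x_{n}),p(x_{m},x_{m})\}\to a$, $p^{+}(x_{n},x_{m})\to 2a$, and therefore $d_{w}(x_{n},x_{m})+d_{w}(x_{m},x_{n})\to 0$, i.e.\ $(x_{n})$ is $d_{w}^{s}$-Cauchy; the converse implication is read off the same identities. Likewise, for a fixed $x\in X$, the relations $p^{+}(x_{n},x)\to p^{+}(x,x)$ and $p(x_{n},x_{n})\to p(x,x)$ hold simultaneously if and only if $d_{w}^{s}(x_{n},x)\to 0$: one direction follows because $\min\{p(x_{n},x_{n}),p(x,x)\}\to p(x,x)$, the other because $|p(x_{n},x_{n})-p(x,x)|\le d_{w}(x_{n},x)+d_{w}(x,x_{n})$.

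The third step assembles the two directions. Assume $(X,p)$ is $\tau(p^{+})$-complete and let $(y_{n})$ be $d_{w}^{s}$-Cauchy in $(X,d_{w})$; by Step~2 it is $\tau(p^{+})$-Cauchy, hence $\tau(p^{+})$-converges to some $x$ with $\lim_{n,m}p^{+}(y_{n},y_{m})=\lim_{n}p^{+}(y_{n},x)=p^{+}(x,x)$. Matching this against $\lim_{n,m}p^{+}(y_{n},y_{m})=2\lim_{n}p(y_{n},y_{n})$ forces $\lim_{n}p(y_{n},y_{n})=p(x,x)$, and then Step~2 gives $d_{w}^{s}(y_{n},x)\to 0$; so $(X,d_{w})$ is bicomplete. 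The reverse implication is identical with the roles of $p^{+}$ and $d_{w}^{s}$ exchanged. The main obstacle is entirely contained in Step~2: one must pin down that $(p(x_{n},x_{n}))$ converges and carefully control the $\min$ appearing in $d_{w}$ — everything else is the bookkeeping with $p^{+}$ already carried out for $d_{p}$ in Theorem \ref{theorem2.4}. One cosmetic remark for the final write-up: although the Proposition preceding this lemma is stated for $K=1$, the same scheme works verbatim for general $K\ge 1$ once $d_{w}$ is known to be a $K$-quasi-metric, with each constant $1$ replaced by the appropriate power of $K$.
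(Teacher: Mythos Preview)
The paper does not actually prove this lemma: it is stated in the ``Conclusion and future work'' section, with the remark that the detailed investigation is deferred to \cite{gabx2}. So there is no proof in the paper to compare your proposal against.

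That said, your route via the symmetrization $p^{+}$ and the identity
\[
d_{w}(x,y)+d_{w}(y,x)=p^{+}(x,y)-2\min\{p(x,x),p(y,y)\}
\]
is the natural one and matches the strategy of \cite[Lemma~2.4]{isak}. The argument is essentially correct. One small logical slip: in Step~2, when you start from a $\tau(p^{+})$-Cauchy sequence and want to conclude that $(p(x_{n},x_{n}))_{n}$ converges, you invoke the estimate $|p(x,x)-p(y,y)|\le d_{w}(x,y)+d_{w}(y,x)$. But at that point you do not yet know anything about $d_{w}(x_{n},x_{m})$; the bound goes in the wrong direction for this implication. The clean fix is immediate: the definition of $\tau(p^{+})$-Cauchy says $\lim_{n,m\to\infty}p^{+}(x_{n},x_{m})$ exists, and taking $n=m$ along the diagonal gives $p^{+}(x_{n},x_{n})=2p(x_{n},x_{n})\to 2a$ directly. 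Your estimate \emph{is} exactly the right tool for the converse implication (from $d_{w}^{s}$-Cauchy to $\tau(p^{+})$-Cauchy) and for the convergence half of Step~2, where you use it correctly.

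Two further remarks for the write-up. First, since the lemma says only ``complete'', you should make explicit (as you do) that you interpret this as $\tau(p^{+})$-completeness on the $p$-side and bicompleteness on the $d_{w}$-side; this is the reading under which the ``if and only if'' is true. Second, your closing comment about general $K\ge1$ is optimistic: the inequality $p(x,x)+p(y,y)\le K\,p^{+}(x,y)$ only gives $|p(x,x)-p(y,y)|\le K\,p^{+}(x,y)-2\min\{p(x,x),p(y,y)\}$, and the extra factor $K$ prevents the identity-based bookkeeping from closing as cleanly; in particular, nonnegativity of $d_{w}$ itself is not automatic when the small self-distance axioms are dropped and $K>1$. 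So keep the statement at $K=1$, as the paper does in the preceding Proposition.
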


We intend to take up this investigation in more details in \cite{gabx2}. We shall see that fixed point theorems on weak $K$-partial quasi-metric spaces
may be obtained from fixed point theorems on $K$-partial quasi-metric spaces, and then fixed point theorems on weak $K$-partial metric spaces may be obtained from fixed point
theorems on quasi-metric type spaces.
There, we also introduce the so called ``weak weight functions" and the idea of ``weak partial quasi-metric space with compatible weak weight" to study the completeness for weak partial quasi-metric spaces via Caristi's type mappings.

	\bibliographystyle{amsplain}

\end{document}